\tikzstyle{every picture}=[line width=.7pt,minimum size=3pt,every label/.append style={font=\normalsize},label distance=2pt]
\tikzstyle{every node}=[font=\normalsize,circle,draw=black,fill=black,inner sep=0pt,minimum width=1.3pt]
\newtheorem*{rep@theorem}{\rep@title}
\newcommand{\newreptheorem}[2]{%
\newenvironment{rep#1}[1]{%
 \def\rep@title{#2 \ref{##1}}%
 \begin{rep@theorem}}%
 {\end{rep@theorem}}}
\theoremstyle{plain}
\newtheorem{theorem}{Theorem}[section]
\newtheorem{proposition}[theorem]{Proposition}
\newtheorem{corollary}[theorem]{Corollary}
\newtheorem{lemma}[theorem]{Lemma}
\newtheorem{conjecture}[theorem]{Conjecture}
\theoremstyle{definition}
\newtheorem{definition}[theorem]{Definition}
\newtheorem{example}[theorem]{Example}
\newtheorem{examples}[theorem]{Examples}
\newtheorem{notation}[theorem]{Notation}
\newtheorem{question}[theorem]{Question}
\newtheorem{problem}[theorem]{Problem}
\newtheorem{remark}[theorem]{Remark}
\date{}
\newcommand{\hgt}{{\rm ht}}
\newcommand{\pd}{{\rm pd}}
\newcommand{\depth}{{\rm depth}}
\newcommand{\reg}{{\rm reg}}
\newcommand{\chara}{{\rm char}}
\newcommand{\cone}{{\rm cone}}
\newcommand{\mc}{\mathcal}
\title{Cohen-Macaulay binomial edge ideals and\\ accessible graphs}
\author{Davide Bolognini, Antonio Macchia, Francesco Strazzanti}
\address{{\small Davide Bolognini, Dipartimento di Matematica, Universit\`a di Bologna, Piazza di Porta San Donato 5, 40126 Bologna, Italy}}
\email{{\small davide.bolognini.cast@gmail.com}}
\address{{\small Antonio Macchia, Fachbereich Mathematik und Informatik, Freie Universit\"at Berlin, Arnimallee 2, 14195 Berlin, Germany}}
\email{{\small macchia.antonello@gmail.com}}
\address{{\small Francesco Strazzanti, Dipartimento di Matematica ``Giuseppe Peano'', Universit\`a degli Studi di Torino, Via Carlo Alberto 10, 10123 Torino, Italy}}
\email{{\small francesco.strazzanti@gmail.com}}
\begin{document}

\begin{abstract}
The cut sets of a graph are special sets of vertices whose removal disconnects the graph. They are fundamental in the study of binomial edge ideals, since they encode their minimal primary decomposition.
We introduce the class of \textit{accessible graphs} as the graphs with unmixed binomial edge ideal and whose cut sets form an accessible set system. We prove that the graphs whose binomial edge ideal is Cohen-Macaulay are accessible and we conjecture that the converse holds.
We settle the conjecture for large classes of graphs, including chordal and traceable graphs, providing a purely combinatorial description of Cohen-Macaulayness. The key idea in the proof is to show that both properties are equivalent to a further combinatorial condition, which we call \textit{strong unmixedness}.
\end{abstract}

\maketitle

\noindent {\bf Mathematics Subject Classification (2020):} 13H10, 13C05, 05C25.

\noindent {\bf Keywords:} Binomial edge ideals, Cohen-Macaulay rings, accessible set systems, chordal graphs, traceable graphs.

\section{Introduction}

Binomial edge ideals, introduced in 2010/11 in \cite{HHHKR10} and \cite{O11}, are quadratic binomial ideals associated with finite simple graphs. They are generated by certain $2$-minors of a $(2 \times n)$-generic matrix corresponding to the edges of a graph on $n$ vertices; more precisely, the \textit{binomial edge ideal} of a graph $G$ is the ideal
\[
J_G=(x_i y_j - x_j y_i : \{i,j\} \in E(G)) \subseteq K[x_1,\dots,x_n,y_1,\dots,y_n],
\]
where $E(G)$ is the edge set of $G$ and $K$ is a field. In this sense, they generalize the ideals of $2$-minors and in the last ten years gave rise to a rich and active research avenue. They also arise in the study of conditional independence statements in Algebraic Statistics \cite[Section 4]{HHHKR10} and are a subclass of the so-called Cartwright-Sturmfels ideals \cite[Section 3]{CDG18}.

Exploiting the combinatorics of the underlying graph, many authors have studied algebraic and homological properties and invariants of these ideals, such as their regularity \cite{JKS20, KS16, MM13, RSK20b, RSK20, SK18}, depth \cite{BN17, RSK21}, local cohomology \cite{A19}, universal Gr\"obner basis \cite{BBS17} and licci property \cite{ERT20}. In particular, their primary decomposition and unmixedness can be characterized combinatorially. Indeed, given a graph $G$, the minimal prime ideals of $J_G$ are in bijection with the so-called cut sets of $G$, see \cite[Corollary 3.9]{HHHKR10}. Recall that a \textit{cut set} is a subset $S$ of vertices of $G$ such that either $S=\emptyset$ or $c_G(S\setminus \{s\})<c_G(S)$ for every $s \in S$, where $c_G(S)$ denotes the number of connected components of the graph obtained from $G$ by removing the vertices of $S$. By \cite[Lemma 2.5]{RR14}, $J_G$ is \textit{unmixed} if and only if $c_G(S) = |S| + c$ for every $S \in \mc C(G)$, where $\mc C(G)$ is the collection of cut sets of $G$ and $c$ is the number of connected components of $G$.

In general, it is not easy to determine whether an ideal is Cohen-Macaulay, also due to the limitations of symbolic computations. Therefore, it is very interesting to find alternative descriptions of Cohen-Macaulayness. In this direction, several authors found constructions (\cite{KS15, RR14}) and described classes of graphs whose binomial edge ideal is Cohen-Macaulay (\cite{BMS18, EHH11, R13, R19}). In this paper we present the first attempt to find a general combinatorial characterization of Cohen-Macaulay binomial edge ideals, which is only based on the structure of the cut sets of a graph, providing a simpler way to check such homological property.

In a previous paper, \cite{BMS18}, we give a classification of bipartite graphs with Cohen-Macaulay binomial edge ideal, providing an explicit construction in graph-theoretical terms. In \cite[Theorem 6.1]{BMS18}, we also present a further combinatorial characterization of Cohen-Macaulayness in terms of cut sets: if $G$ is bipartite, then $J_G$ is Cohen-Macaulay if and only if
\[
\begin{array}{cl}
(\ast) & \text{$J_G$ is unmixed and $\mc C(G)$ is an \textit{accessible set system}, i.e., for every non-empty $S \in \mc C(G)$}\\[-2.5mm]
 & \text{there exists $s \in S$ such that $S \setminus \{s\} \in \mc C(G)$.}
\end{array}
\]
\noindent In this paper, we call \textit{accessible} a graph with property $(\ast)$. The previous equivalence and further computational evidence motivated us to formulate the following:

\begin{conjecture}\label{C.mainConj}
Let $G$ be a graph. Then, $J_G$ is Cohen-Macaulay if and only if $G$ is accessible.
\end{conjecture}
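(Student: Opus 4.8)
The plan is to establish the two implications by different means and, following the strategy announced in the abstract, to interpolate a third purely combinatorial property --- \emph{strong unmixedness} --- engineered so that it is simultaneously a consequence of Cohen-Macaulayness and a sufficient condition for it.

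First I would treat the forward implication: $J_G$ Cohen-Macaulay $\Rightarrow$ $G$ accessible. Since Cohen-Macaulay rings are unmixed, \cite[Lemma 2.5]{RR14} already gives $c_G(S)=|S|+c$ for every $S\in\mc C(G)$, so only the accessibility of $\mc C(G)$ remains: for each non-empty cut set $S$ one must exhibit $s\in S$ with $S\setminus\{s\}\in\mc C(G)$. The idea is that reintegrating a vertex $s\in S$ into $G\setminus S$ yields a "smaller" configuration whose binomial edge ideal is again Cohen-Macaulay, and that for the count to work out $S\setminus\{s\}$ must be a cut set --- equivalently, $s$ must be adjacent to exactly two connected components of $G\setminus S$. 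I would make this precise by induction on $|S|$ (or on $|V(G)|$), using the behavior of $J_G$ under vertex deletion/contraction and localization at the minimal prime attached to $S$ to force the existence of such an $s$.

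For the converse, accessible $\Rightarrow$ Cohen-Macaulay, I would formalize strong unmixedness as a recursive strengthening of accessibility: not only does every cut set lose a vertex and remain a cut set, but the subgraphs produced along such a chain of deletions, together with their induced cut-set structure, stay unmixed, so that accessibility propagates through a constructible tree of subgraphs. The proof then splits into (a) \emph{strongly unmixed $\Rightarrow$ Cohen-Macaulay}, by induction using a Mayer--Vietoris / short exact sequence that compares $\depth(S/J_G)$ with the depths of the smaller graphs occurring in the recursion and checks that the depth reaches $\dim$; and (b) \emph{accessible $\Leftrightarrow$ strongly unmixed} for the target families (chordal, traceable, and the further classes), where the special structure --- a simplicial vertex, respectively a Hamiltonian path --- is exactly what is needed to control how cut sets restrict to the relevant subgraphs.

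The main obstacle is step (b) in full generality: an arbitrary accessible graph need not be strongly unmixed, because unmixedness can fail when one deletes a vertex from a cut set, and it is conceivable that the recursion breaks for some pathological accessible graph whose ideal is nevertheless Cohen-Macaulay (or conversely). A secondary difficulty lies in the bookkeeping of the induction in (a): one must verify that the combinatorial recursion produces graphs whose binomial edge ideals genuinely fit into the required exact sequences with the correct codimension and depth accounting, and that the base case behaves. Settling the conjecture in full would require either proving accessible $\Rightarrow$ strongly unmixed unconditionally, or bypassing strong unmixedness altogether with a direct homological argument --- for instance, building a maximal $S/J_G$-regular sequence explicitly from an accessibility chain.
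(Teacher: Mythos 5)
The statement you are addressing is a conjecture: the paper itself does not prove Conjecture \ref{C.mainConj} in full, and your proposal does not either, so it cannot be accepted as a proof. Your overall architecture does coincide with the paper's program --- prove ``Cohen-Macaulay $\Rightarrow$ accessible'' in general, introduce strong unmixedness, prove ``strongly unmixed $\Rightarrow$ Cohen-Macaulay'' via the short exact sequence $0 \to R/J_G \to R/J_{G_v} \oplus R/B \to R/(A+B) \to 0$ and the Depth Lemma, and then close the loop only for special classes --- but there are two concrete gaps. First, your sketch of the forward implication (induction on $|S|$ using ``vertex deletion/contraction and localization at the minimal prime attached to $S$'') is not a workable argument as stated: Cohen-Macaulayness does not pass in any evident way to the auxiliary graphs obtained by reinserting a vertex of a cut set, and there is no mechanism in your outline that forces the existence of an $s\in S$ reconnecting exactly two components. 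The paper's proof of Theorem \ref{T.CMimpliesRCSP} is genuinely different and relies on Àlvarez Montaner's cohomological criterion (Theorem \ref{T.Poset}): one takes a minimal element $P_S+P_T$ among sums $P_S+P_U$ with $U\subsetneq S$, shows the open interval $(P_S+P_T,P_S)$ in the poset $\mc Q_{J_G}$ is empty so that $P_S$ is an isolated point of $(P_S+P_T,1_{\mc Q_{J_G}})$, and deduces $\widetilde H^0\neq 0$ in a cohomological degree forbidden by Cohen-Macaulayness (here unmixedness is used to compare $\dim(R/(P_S+P_T))$ with $\dim(R/J_G)$). Without this, or some equally sharp tool such as Serre-type conditions, your induction has no engine.

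Second, for the converse you correctly identify the crux, but you leave it open: the recursion defining strong unmixedness requires, at each step, a cut vertex $v$ with $J_{G\setminus\{v\}}$ unmixed, and the paper shows (Proposition \ref{P.RCSPImpliesStronglyUnmixed}, Corollary \ref{C.rcsp2}) that the whole conjecture reduces to exactly this existence statement, recorded as Question \ref{Q.GoodCutVertex}. The paper answers it affirmatively only for chordal and traceable graphs (Propositions \ref{P.GoodCutVertexChordal} and \ref{P.CompleteSubgraphOnCutVertices} via the block-tree argument of Proposition \ref{P.goodCutVertexUnmixed}), yielding Theorems \ref{T.FinalChordal} and \ref{T.traceableCM}; note also that your worry that ``an arbitrary accessible graph need not be strongly unmixed'' is misdirected --- no such example is known, and producing one would refute the conjecture rather than leave it open. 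As it stands, your proposal reproduces the paper's reduction but supplies neither the topological argument for the forward direction nor an answer to Question \ref{Q.GoodCutVertex}, so the conjecture remains unproved.
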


A topological characterization of Cohen-Macaulayness has been recently proved by Àlvarez Montaner in \cite{A19}, relating this algebraic property to the vanishing of the reduced cohomology groups of a certain poset arising from the minimal prime ideals of $J_G$. The structure of this poset can be rather complicated even for relatively small graphs. Moreover, from \cite[Corollary 3.11]{A19}, it is not clear whether the Cohen-Macaulayness of $J_G$ depends on the field. On the contrary, Conjecture \ref{C.mainConj} would provide a combinatorial and field-independent characterization in terms of cut sets.

In Section 3 the poset introduced by Àlvarez Montaner turns out to be an important tool to prove one implication of Conjecture \ref{C.mainConj}:

\begin{reptheorem}{T.CMimpliesRCSP}
Let $G$ be a graph. If $J_G$ is Cohen-Macaulay, then $G$ is accessible.
\end{reptheorem}

As a consequence, we show that \cite[Conjecture 1.6]{BV15}, about the diameter of the dual graph of an ideal, holds for all binomial edge ideals, see Corollary \ref{C.Hirsch}.

In Section 4, we start a systematic study of accessible graphs and of their cut sets. In particular, by Theorem \ref{T.CMimpliesRCSP}, the properties of accessible graphs are also properties of graphs with Cohen-Macaulay binomial edge ideal. This gives further combinatorial ways to check whether $J_G$ is not Cohen-Macaulay for a given graph $G$. To state the next result, recall that a vertex $v$ of $G$ is called \textit{cut vertex} if the graph obtained by removing $v$ has more connected components than $G$.

\begin{theorem}
Let $G$ be a connected accessible graph.\\
\indent {\rm [\textbf{Remark \ref{R.NoCutVerticesComplete}}]:} If $G$ has no cut vertices, then it is a complete graph.\\
\indent {\rm [\textbf{Lemma \ref{L.oneCutVertex}}]:} If $G$ has one cut vertex, then it is a cone over two connected accessible graphs with fewer vertices than $G$.\\
\indent {\rm [\textbf{Theorem \ref{T.propertiesCM}}]:} If $G$ has at least two cut vertices, then:
    \begin{itemize}
    \item[{\rm (1)}] every non-empty cut set of $G$ contains a cut vertex;
    \item[{\rm (2)}] the graph induced on the cut vertices of $G$ is connected;
    \item[{\rm (3)}] every vertex of $G$ is adjacent to a cut vertex.
    \end{itemize}
In particular, these properties hold if $J_G$ is Cohen-Macaulay.
\end{theorem}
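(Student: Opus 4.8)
The statement collects three results sharing a common elementary tool: if $S$ is a non-empty cut set of an accessible graph $G$, then iterating the defining property of accessibility produces cut sets $S = S_0 \supsetneq S_1 \supsetneq \cdots \supsetneq S_m = \emptyset$ with $|S_{i-1}| = |S_i| + 1$, so $S_{m-1} = \{u\}$ for some $u \in S$; since $\{u\} \in \mc C(G)$, the vertex $u$ is a cut vertex. This is exactly item (1) of Theorem \ref{T.propertiesCM}. It also gives Remark \ref{R.NoCutVerticesComplete}: if $G$ has no cut vertex then $\emptyset$ is its only cut set, while if $G$ were not complete a minimal vertex separator of two non-adjacent vertices would be a non-empty cut set (each of its vertices being essential), a contradiction. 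Two facts will be used throughout: by unmixedness, every cut vertex $v$ satisfies $c_G(\{v\}) = 2$, i.e.\ $G - v$ has exactly two components; and a non-empty set whose removal leaves $v$ adjacent to every remaining vertex cannot be a cut set. The closing assertion of the theorem then follows by combining everything with Theorem \ref{T.CMimpliesRCSP}.

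For Lemma \ref{L.oneCutVertex}, let $v$ be the unique cut vertex, so $G - v = C_1 \sqcup C_2$. First, $v$ is adjacent to every other vertex: otherwise, for $w \nsim v$ a minimal $v$--$w$ separator is a non-empty cut set avoiding $v$, which peels to a singleton cut set, hence a cut vertex different from $v$ --- impossible. Thus $G = v * (C_1 \sqcup C_2)$. To see that $C_1$ and $C_2$ (connected, with fewer vertices than $G$) are accessible, I would establish the cut-set dictionary
\begin{gather*}
\mc C(G) = \{\emptyset\} \cup \big\{\, \{v\} \cup S_1 \cup S_2 \ :\ S_1 \in \mc C(C_1),\ S_2 \in \mc C(C_2) \,\big\}, \\
c_G(\{v\} \cup S_1 \cup S_2) = c_{C_1}(S_1) + c_{C_2}(S_2),
\end{gather*}
whose crux is that every non-empty cut set of a cone contains the apex. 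Taking $S_2 = \emptyset$ and using unmixedness of $G$ forces $c_{C_1}(S_1) = |S_1| + 1$, so $C_1$ is unmixed; and peeling $\{v\} \cup S_1$ in $G$ (the removed element cannot be $v$ when $S_1 \neq \emptyset$) yields $s \in S_1$ with $S_1 \setminus \{s\} \in \mc C(C_1)$, so $\mc C(C_1)$ is accessible. Symmetrically for $C_2$.

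Items (2) and (3) of Theorem \ref{T.propertiesCM} are the substantial part, and I expect them to be the main obstacle. The natural idea --- induct on $|V(G)|$ by passing, at a cut vertex $v$ with $G - v = D_1 \sqcup D_2$, to the ``branches'' $G[D_i \cup \{v\}]$ --- does not work: such a branch of an accessible graph need not even be unmixed (as small examples involving a $4$-cycle show), because a cut set of $G$ containing $v$ ``uses up'' a component on the other side $D_{3-i}$ that the branch no longer sees. So a direct argument is required. For (3), I would suppose a vertex $w$ has no cut-vertex neighbour; the case when $w$ is itself a cut vertex is handled by (2), so assume not and pick a cut vertex $v$ minimising $\mathrm{dist}(w, v)$, necessarily $\ge 2$. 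Letting $A$ be the component of $G - v$ containing $w$, one exploits a shortest $w$--$v$ path and the cut sets of $G$ lying in $A \cup \{v\}$ (starting from $\{v\}$ and enlarging by vertices of $A$ towards $w$) to build a cut set of $G$ whose number of components is not $|\cdot| + 1$, contradicting unmixedness. For (2), assume the graph induced on the cut vertices is disconnected and choose cut vertices $v, u$ in different components of it with $\mathrm{dist}(v, u) =: d$ minimal; then $v \nsim u$, and no internal vertex of a shortest $v$--$u$ path is a cut vertex (else it would contradict minimality with one of the endpoints). Since $v$ has a neighbour in the component of $G - v$ containing $u$ other than $u$ itself, one checks $c_G(\{v, u\}) \ge 3$; as $c_G(\{v\}) = c_G(\{u\}) = 2$ this makes $\{v, u\}$ a cut set, so $c_G(\{v, u\}) = 3$ by unmixedness; analysing how this cut set peels and combining with unmixedness should then yield a contradiction. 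In both arguments the delicate point --- tracking exactly which subsets are cut sets of $G$ and computing their component counts --- is where the real work lies.
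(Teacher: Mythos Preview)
Your treatment of item (1), of Remark \ref{R.NoCutVerticesComplete}, and of Lemma \ref{L.oneCutVertex} is correct and essentially coincides with the paper's (the paper reaches the cone conclusion via the observation that $\mc C(G_v)=\{\emptyset\}$, but your minimal-separator argument is an equivalent shortcut; the cut-set dictionary for cones is exactly Theorem \ref{T.CMcones}(1)--(3)).

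For (2) and (3), however, there are genuine gaps. In your sketch of (2) you correctly establish that $\{v,u\}$ is a cut set with $c_G(\{v,u\})=3$, but then the argument stops: peeling $\{v,u\}$ gives $\{v\}$ or $\{u\}$, both legitimate cut sets, so accessibility is satisfied and no contradiction arises. It is not clear how ``analysing how this cut set peels'' can be pushed further, and I do not see a way to make this distance-minimising approach work without importing a new idea. The paper proceeds very differently: it uses the operation $G \mapsto G_v$ (completing the neighbourhood of a vertex) from Lemma \ref{L.completingNeighborsVertex}, which preserves accessibility and kills exactly one cut vertex. Iterating $G_{v_k}, (G_{v_k})_{v_1}, \dots$ reduces to the one-cut-vertex case (Lemma \ref{L.oneCutVertex}), where the apex is adjacent to everything; unwinding shows $v_k$ is adjacent in $G$ to some $v_i$, and induction on $k$ finishes.

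For (3) your sketch is too vague to evaluate, and the proposed mechanism (enlarging cut sets along a shortest path to force a unmixedness violation) is not the paper's idea. The paper's argument is short and uses only hypothesis (1): let $N=\bigcup_i N_G[v_i]$ over all cut vertices $v_i$, and set $N'=\{x\in N : N_G(x)\nsubseteq N\}$. Each $x\in N'$ has a neighbour outside $N$ and a neighbour in $\{v_1,\dots,v_r\}\subseteq N\setminus N'$, lying in different components of $G\setminus N'$, so $N'$ is a cut set; it is non-empty because the offending vertex $w$ lies outside $N$; and by construction $N'$ contains no cut vertex, contradicting (1). This ``boundary of the cut-vertex neighbourhoods'' construction is the missing key step.
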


Along the way, we prove that if $G = {\rm cone}(v, H_1 \sqcup H_2)$ and $J_G$ is Cohen-Macaulay, then $J_{H_1}$ and $J_{H_2}$ are Cohen-Macaulay by Theorem \ref{T.CMcones}, showing the converse of \cite[Theorem 3.8]{RR14}.

To study the other implication of Conjecture \ref{C.mainConj}, we introduce the class of \textit{strongly unmixed} binomial edge ideals, see Definition \ref{D.StronglyUnmixed}. The main result of Section 5, Theorem \ref{T.StronglyUnmixedImpliesCM}, shows that strong unmixedness implies Cohen-Macaulayness. Summarizing, we have:
\[
J_G \text{ strongly unmixed } \Longrightarrow J_G \text{ Cohen-Macaulay } \Longrightarrow G \text{ accessible},
\]
where both strong unmixedness and accessibility are purely combinatorial conditions.

By virtue of Proposition \ref{P.RCSPImpliesStronglyUnmixed} and Corollary \ref{C.rcsp2}, proving Conjecture \ref{C.mainConj} boils down to show that every non-complete accessible graph $G$ has a cut vertex $v$ such that $J_{G \setminus \{v\}}$ is unmixed, see Question \ref{Q.GoodCutVertex}.

In Section 6, we focus on two important classes: \textit{chordal graphs} and graphs containing a Hamiltonian path, called \textit{traceable graphs}. We prove that chordal and traceable graphs, if accessible, have the cut vertex we are looking for. In particular, we show that for these graphs being accessible is equivalent both to $J_G$ Cohen-Macaulay and to $J_G$ strongly unmixed, see Theorems \ref{T.FinalChordal} and \ref{T.traceableCM}. This also shows that the Cohen-Macaulayness of $J_G$ does not depend on the field for chordal and traceable graphs, even if the graded Betti numbers of $J_G$ may depend on the field, as in Example \ref{E.BipartiteFieldDependent}.

In Corollary \ref{C.bipartiteCM} we also notice that accessible bipartite graphs are traceable, thus recovering for these graphs the equivalence between $J_G$ Cohen-Macaulay and $G$ accessible proved in \cite[Theorem 6.1]{BMS18}.

We conclude by discussing some open questions in Section 7.

\section{Preliminaries}

Throughout the paper, all graphs will be finite and simple, i.e., undirected graphs with no loops nor multiple edges. Given a graph $G$, we denote by $V(G)$ and $E(G)$ its vertex and edge set, respectively. For every vertex $v$ of $G$, we denote by $N_G(v) = \{w \in V(G) : \{v,w\} \in E(G)\}$ the set of neighbours of $v$ in $G$ and we set $N_G[v]=N_{G}(v) \cup \{v\}$. Given $W\subseteq V(G)$, the \textit{induced subgraph} by $W$ in $G$ is the graph $G[W]$ with vertex set $W$ and whose edge set consists of the edges of $G$ with both endpoints in $W$.

To simplify the notation, if $S \subseteq V(G)$, we denote by $G \setminus S$ the induced subgraph $G[V(G)\setminus S]$, which is the graph obtained by removing from $G$ the vertices of $S$ and all the edges incident in them. In particular, $G \setminus \{v\}$ denotes the graph obtained by removing the vertex $v$ and all edges containing $v$.

A vertex $v \in V(G)$ is said to be a \textit{cut vertex} or \textit{cut point} of $G$ if $G\setminus \{v\}$ has more connected components than $G$. Given $S \subseteq V(G)$, we denote by $c_G(S)$ or simply $c(S)$ (if the graph is clear from the context) the number of connected components of $G \setminus S$.  Moreover, we say that $S$ is a \textit{cut-point set} or simply \textit{cut set} of $G$ if either $S=\emptyset$ or $c_G(S\setminus \{s\})<c_G(S)$ for every $s \in S$. In particular, the cut sets of cardinality $1$ are the cut vertices of $G$. We denote by $\mc C(G)$ the collection of cut sets of $G$.

In this context, when we say that, given a cut set $S$ of $G$, a vertex $v \in S$ \textit{reconnects} some connected components $G_1,\dots,G_r$ of $G \setminus S$, we mean that if we add back $v$ to $G \setminus S$, together with all edges of $G$ incident in $v$, then $G_1,\dots,G_r$ are in the same connected component.

\medskip
Cut sets are very important in the study of binomial edge ideals because they allow to describe the minimal primary decomposition of $J_G$, as we are going to explain.

Let $G$ be a graph with vertex set $[n]=\{1,\dots,n\}$, $K$ be a field and consider the polynomial ring in $2n$ indeterminates $R=K[x_1,\dots,x_n, y_1,\dots,y_n]$. The \textit{binomial edge ideal} of $G$ is the ideal
\[
J_G=(x_i y_j - x_j y_i : \{i,j\} \in E(G)) \subseteq R.
\]
For every $S \subseteq V(G)$, we set
\[
P_S(G) = (x_i,y_i : i \in S)+J_{\widetilde{G}_1}+\dots+J_{\widetilde{G}_{c(S)}},
\]
where $G_1, \dots, G_{c(S)}$ are the connected components of $G \setminus S$ and  $\widetilde{G}_j$ is the complete graph on the vertex set $V(G_j)$.

By \cite[Section 3]{HHHKR10}, $P_S(G)$ is a prime ideal with height $n-c(S)+|S|$, it contains $J_G$ and it is a minimal prime ideal of $J_G$ if and only if $S$ is a cut set of $G$. Moreover, the minimal primary decomposition of $J_G$ is $J_G=\cap_{S \in \mc C(G)} P_S(G)$.

We recall that, an ideal is \textit{(height-)unmixed} if all its minimal prime ideals have the same height. Thus, since $\emptyset \in \mc C(G)$, it easily follows that $J_G$ is unmixed if and only if $c_G(S)=|S|+c$ for every $S \in \mc C(G)$, where $c$ is the number of connected components of $G$. In this case, $\dim(R/J_G) = n+c$.

\begin{remark}\label{R.NoCutSetsComplete}
For a graph $G$, $\mathcal{C}(G)=\{\emptyset\}$ if and only if the connected components of $G$ are complete graphs. Moreover, if $G$ is connected, then $J_G$ is the ideal of $2$-minors of a $(2 \times n)$-generic matrix and, hence, Cohen-Macaulay, see \cite[Corollary 2.8]{BV88}.
\end{remark}

We introduce a class of graphs whose binomial edge ideal is unmixed, which will be the main object of study in the paper.

\begin{definition}\label{D.recursiveCutSetProp}
A graph $G$ is \textit{accessible} if $J_G$ is unmixed and $\mc C(G)$ is an \textit{accessible set system}, i.e., for every non-empty cut set $S \in \mc C(G)$ there exists $s \in S$ such that $S \setminus \{s\} \in \mc C(G)$.
\end{definition}

Notice that this is a purely combinatorial notion, since unmixedness can also be phrased in terms of the graph.

\begin{example}\label{E.accessible}
The graph $G$ in Figure \ref{F.accessibleGraph} is accessible. In fact, its cut sets are
\begin{align*}
\mc C(G) \! =& \{\emptyset, \{2\}, \{5\}, \{10\}, \{2, 5\},  \{2, 10\}, \{3, 10\}, \{4, 10\},\! \{5, 7\},\! \{5, 10\},\! \{2, 4, 5\},\! \{2, 4, 10\},\! \{2, 5, 7\},\! \{2, 5, 10\}, \\
 & \{4, 5, 10\}, \{5, 7, 10\}, \{2, 4, 5, 7\}, \{2, 4, 5, 10\}, \{2, 5, 7, 10\}, \{3, 5, 7, 10\}, \{4, 5, 7, 10\}, \{2, 4, 5, 7, 10\}\}
\end{align*}
and it is easy to check that $\mc C(G)$ is an accessible set system.

\begin{figure}[ht!]
\begin{subfigure}[c]{0.45\textwidth}
\centering
\begin{tikzpicture}
\node[label={below:{\small $1$}}] (a) at (4,0) {};
\node[label={below:{\small $2$}}] (b) at (2.5,0) {};
\node[label={below:{\small $3$}}] (c) at (1,0) {};
\node[label={below right:{\small $4$}}] (d) at (1.5,0.75) {};
\node[label={below:{\small $5$}}] (e) at (0,0.75) {};
\node[label={below:{\small $6$}}] (f) at (-1,0) {};
\node[label={left:{\small $7$}}] (g) at (-1.5,0.75) {};
\node[label={above:{\small $8$}}] (h) at (-1,1.5) {};
\node[label={above:{\small $9$}}] (i) at (1,1.5) {};
\node[label={above:{\small $10$}}] (j) at (2.5,1.5) {};
\node[label={above:{\small $11$}}] (k) at (4,1.5) {};
\draw (a) -- (b) -- (c) -- (e) -- (f) -- (g) -- (h) -- (e) -- (i) -- (j) -- (k)
(g) -- (e) -- (j) -- (d) -- (i)
(c) -- (d)
(b) -- (j) ;
\end{tikzpicture}
\caption{An accessible graph $G$} \label{F.accessibleGraph}
\end{subfigure}
\begin{subfigure}[c]{0.45\textwidth}
\centering
\begin{tikzpicture}
\node[label={above:$1$}] (a) at (0,1.5) {};
\node[label={above:$3$}] (b) at (1,1.5) {};
\node[label={above:$5$}] (c) at (2,1.5) {};
\node[label={above:$7$}] (d) at (3,1.5) {};
\node[label={below:$2$}] (e) at (0.5,0) {};
\node[label={below:$4$}] (f) at (1.5,0) {};
\node[label={below:$6$}] (g) at (2.5,0) {};
\draw (0,1.5) -- (0.5,0) -- (1,1.5) -- (1.5,0) -- (2,1.5) -- (2.5,0) -- (3,1.5)
(0.5,0) -- (2,1.5)
(1,1.5) -- (2.5,0);
\end{tikzpicture}
\caption{A non-accessible graph $H$ with $J_H$ unmixed} \label{F.notAccessible}
\end{subfigure}
\caption{}
\end{figure}

On the other hand, the graph $H$ in Figure \ref{F.notAccessible} is not accessible, even if $J_H$ is unmixed. In fact, the cut sets of $H$ are $\mc C(H) = \{\emptyset, \{2\}, \{6\}, \{2,6\}, \{3,5\}, \{2,4,6\}\}$. In particular, $\{3,5\} \in \mc C(H)$, but neither $3$ nor $5$ is a cut vertex of $H$. More in general, the graphs of \cite[Example 2.2]{BMS18}, which include the graph $H$, are not accessible, even if their binomial edge ideal is unmixed.
\end{example}

When needed, we may assume that the graphs are connected, by the following remark.

\begin{remark}\label{R.connected}
Given a graph $G$ with connected components $G_1,\dots,G_c$, the following properties hold.
\begin{itemize}
\item[(i)] $\mc C(G) = \{S_1 \cup \cdots \cup S_c : S_i \in \mc C(G_i)\}$ by definition. Hence, if $S$ is a cut set of $G$, then $S \cap V(G_i) \in \mc C(G_i)$ for every $i$.
\item[(ii)] $J_G$ is unmixed if and only if $J_{G_i}$ is unmixed for every $i=1, \dots, c$. This follows by (i).
\item[(iii)] $G$ is accessible if and only if $G_i$ is accessible for every $i=1,\dots,c$. It follows by (i) and (ii).
\item[(iv)] $J_G$ is Cohen-Macaulay if and only $J_{G_i}$ is Cohen-Macaulay for every $i=1,\dots,c$. In fact, $R/J_G \cong R_1/J_{G_1} \otimes \cdots \otimes R_c/J_{G_c}$, where $R_i = K[x_j,y_j : j \in V(G_i)]$.
\end{itemize}
\end{remark}

\section{A necessary condition for the Cohen-Macaulayness of \texorpdfstring{$J_G$}{J\_G}}

In this section, we are going to prove that if $J_G$ is Cohen-Macaulay, then $G$ is accessible, by using a certain poset associated with $J_G$, introduced by Àlvarez Montaner in \cite[Definition 3.3]{A19}. We recall here its construction.

Let $I$ be a radical ideal of a commutative Noetherian ring containing a field $K$. We define $\mc P_I$ to be the set of all possible sums of ideals in the minimal primary decomposition of $I$, i.e., \[
\mc P_I = \{I_{i_1}+\cdots+I_{i_s} : I_{i_j} \text{ primary component of } I, s>0\}.
\]
We note that every ideal in $\mc P_I$ contains $I$.

\begin{definition}
Let $G$ be a graph on the vertex set $[n]$ and $J_G$ be the binomial edge ideal of $G$ in the polynomial ring $R=K[x_i,y_i : i \in [n]]$.
We define the poset $\mc Q_{J_G}$ associated with $J_G$ whose elements are given by the following procedure:
\begin{itemize}
\item[1.] set $I := J_G$;
\item[2.] add to $\mc Q_{J_G}$ the prime ideals in $\mc P_I$;
\item[3.] for every non-prime ideal $J \in \mc P_I$, set $I := J$ and return to Step 2.
\end{itemize}

We order the elements of $\mc Q_{J_G}$ by reverse inclusion and then add a top element $1_{\mc Q_{J_G}}$ to $\mc Q_{J_G}$, greater than all the other elements.
\end{definition}

We note that, $\mc Q_{J_G}$ is finite because $R$ is Noetherian and every ideal of $\mathcal{P}_I$ contains $I$.

\begin{example}
Let $G$ be the graph in Figure \ref{F.GraphPoset} and denote by $f_{ij} = x_iy_j-x_jy_i$ the generators of $J_G$. Hence, $J_G = (f_{12}, f_{23}, f_{24}, f_{34}, f_{45})$ and its primary decomposition is $J_G = P_0 \cap P_1 \cap P_2 \cap P_3$, where
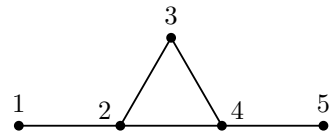
\begin{figure}[ht!]
\begin{minipage}[c]{0.57\textwidth}
    \centering
    \begin{alignat*}{2}
      P_0 &= P_{\emptyset}(G) &=&\, (f_{12}, f_{13}, f_{14}, f_{15}, f_{23}, f_{24}, f_{25}, f_{34}, f_{35}, f_{45}), \\
      P_1 &= P_{\{4\}}(G)     &=&\, (x_4, y_4, f_{12}, f_{13}, f_{23}), \\
      P_2 &= P_{\{2\}}(G)     &=&\, (x_2, y_2, f_{34}, f_{35}, f_{45}), \\
      P_3 &= P_{\{2,4\}}(G)   &=&\, (x_2, y_2, x_4, y_4).
    \end{alignat*}
\end{minipage}%
\begin{minipage}[c]{0.38\textwidth}
    \centering
    \begin{tikzpicture}[scale=0.9]
    \node[label={above:{\small $1$}}] (a) at (0,0) {};
    \node[label={above left:{\small $2$}}] (b) at (1.5,0) {};
    \node[label={above:{\small $3$}}] (c) at (2.25,1.3) {};
    \node[label={above right:{\small $4$}}] (d) at (3,0) {};
    \node[label={above:{\small $5$}}] (e) at (4.5,0) {};
    \draw (0,0) -- (1.5,0) -- (3,0) -- (4.5,0)
    (1.5,0) -- (2.25,1.3) -- (3,0);
    \end{tikzpicture}
    \caption{The graph G} \label{F.GraphPoset}
\end{minipage}
\end{figure}

\noindent Among the sums of the minimal primes of $J_G$, the only non-prime ideal is
\[
P_1 + P_2 = P_1 + P_2 + P_3 = (x_2, x_4, y_2, y_4, f_{13}, f_{35}),
\]
and its primary decomposition is $P_1 + P_2 = Q_0 \cap Q_1$, where
\[
Q_0 = (x_2, x_4, y_2, y_4, f_{13}, f_{15}, f_{35}) \quad \text{ and } \quad Q_1 = (x_2, x_3, x_4, y_2, y_3, y_4).
\]
Moreover, $Q_0 = P_0 + P_3 = P_0 + P_1 + P_2 = P_0 + P_1 + P_3 = P_0 + P_2 + P_3 = P_0 + P_1 + P_2 + P_3$ and $Q_0 + Q_1 = (x_2, x_3, x_4, y_2, y_3, y_4, f_{15})$ are prime ideals. The poset $\mc Q_{J_G}$ is depicted in Figure \ref{F.PosetQ_J_G}.

\begin{figure}[ht!]
\tikzstyle{every node}=[font=\normalsize]
\begin{tikzpicture}
  \node (max) at (4,4.5) {$1_{\mc Q_{J_G}}$};
  \node (j) at (8,3.5) {$P_3$};
  \node (i) at (5,3.5) {$P_2$};
  \node (h) at (3,3.5) {$P_1$};
  \node (g) at (0,3.5) {$P_0$};
  \node (f) at (8,2) {$P_2+P_3$};
  \node (e) at (5,2) {$P_1+P_3$};
  \node (d) at (3,2) {$P_0+P_2$};
  \node (c) at (0,2) {$P_0+P_1$};
  \node (b) at (6.5,1) {$Q_1$};
  \node (a) at (1.5,1) {$Q_0$};
  \node (min) at (4,0) {$Q_0+Q_1$};
  \draw (min) -- (a) -- (c) -- (g) -- (max) -- (j) -- (f) -- (b) -- (min)
  (a) -- (d) -- (i) -- (max) -- (h) -- (e) -- (a) -- (f) -- (i)
  (b) -- (e) -- (j)
  (c) -- (h)
  (d) -- (g);
  \draw[preaction={draw=white, -,line width=5pt}] (c) -- (h) -- (e);
  \draw[preaction={draw=white, -,line width=5pt}] (b) -- (e) -- (j);
\end{tikzpicture}
\caption{The poset $\mc Q_{J_G}$} \label{F.PosetQ_J_G}
\end{figure}
\end{example}

\begin{remark} \label{R.Poset}
By construction, every element $I \neq 1_{\mc Q_{J_G}}$ of the poset $\mc Q_{J_G}$ contains at least a prime ideal $P_S(G)$ for some $S \in \mc C(G)$ because $I \in \mc P_J$ for some ideal $J \in \mc Q_{J_G}$ and every ideal of $\mc P_J$ contains $J$. Moreover, if $P_S(G) \subsetneq I$, then $I$ contains another prime ideal $P_U(G)$ with $U \in \mc C(G)\setminus \{S\}$.
\end{remark}

Notice that for every $I_q \in \mc Q_{J_G}$ we have
\[
I_q = P_S(H) = (x_i,y_i : i \in S) + J_{\widetilde H_1} + \cdots + J_{\widetilde H_{c_q}},
\]
for some graph $H$ on the vertex set $[n]$, where $S \subseteq [n]$, $H_1,\dots,H_{c_q}$ are the connected components of $H \setminus S$ and $\widetilde H_i$ is the complete graph on the vertices of $H_i$.
In particular, the poset $\mc Q_{J_G}$ is well-defined because all its elements are radical ideals.
We set
\[
d_q = \dim(R/I_q) = 2n-2|S| - \sum_{i=1}^{c_q} (|H_i|-1) = n-|S|+c_q.
\]
Recall that if $I_q \neq 1_{\mc Q_{J_G}}$, then an open interval of the form $(I_p,I_q) \subsetneq \mc Q_{J_G}$ is the set $\{I_r \in \mc Q_{J_G} : I_q \subsetneq I_r \subsetneq I_p\}$ whereas an open interval of the form $(I_p,1_{\mc Q_{J_G}})$ is the set $\{I_r \in \mc Q_{J_G} : I_r \subsetneq I_p\}$.

In \cite{A19}, \'Alvarez Montaner proves the following topological characterization of Cohen-Macaulay binomial edge ideals, which resembles Reisner's criterion for Cohen-Macaulay squarefree monomial ideals \cite{R76}.

\begin{theorem}[{\cite[Corollary 3.1]{A19}}] \label{T.Poset}
Let $G$ be a graph. The following conditions are equivalent:
\begin{itemize}
\item[{\rm (i)}] $J_G$ is Cohen-Macaulay;
\item[{\rm (ii)}] $\dim_K \widetilde H^{r-d_q-1}((I_q,1_{\mc Q_{J_G}}); K)=0$ for all $r \neq \dim(R/J_G)$ and all $I_q \in \mc Q_{J_G}$.
\end{itemize}
\end{theorem}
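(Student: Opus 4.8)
We outline the strategy of proof. The idea is to compute the local cohomology modules of $R/J_G$ with respect to the graded maximal ideal $\mathfrak{m}=(x_1,\dots,x_n,y_1,\dots,y_n)$ and to match them against a combinatorial model supported on the poset $\mc Q_{J_G}$. The first reduction uses the standard fact that $R/J_G$ is Cohen-Macaulay if and only if $H^i_{\mathfrak m}(R/J_G)=0$ for all $i<\dim(R/J_G)$, the vanishing for $i>\dim(R/J_G)$ being automatic by Grothendieck vanishing; so the whole statement reduces to computing $H^i_{\mathfrak m}(R/J_G)$.

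The plan is to run the Mayer-Vietoris spectral sequence attached to the minimal primary decomposition $J_G=\bigcap_{S\in\mc C(G)}P_S(G)$, namely the one obtained by applying $H^{\bullet}_{\mathfrak m}(-)$ to the complex
\[
0\to R/J_G\to\bigoplus_{S}R/P_S(G)\to\bigoplus_{S\neq T}R/\bigl(P_S(G)+P_T(G)\bigr)\to\cdots.
\]
The essential structural input, specific to binomial edge ideals, is that every ideal occurring in this process is radical: any sum of some of the $P_S(G)$ is again of the form $P_T(H)$ for a suitable graph $H$ on $[n]$, and iterating ``take a non-prime sum, decompose it, add up its minimal primes'' produces exactly the finite poset $\mc Q_{J_G}$. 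This is what makes the spectral sequence tractable, its entries being among the $H^j_{\mathfrak m}(R/I_q)$ with $I_q\in\mc Q_{J_G}$.

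Next, for each $I_q=P_T(H)\in\mc Q_{J_G}$, after killing the variables indexed by $T$ the ring $R/I_q$ becomes a tensor product over $K$ of quotients of polynomial rings by ideals of $2$-minors of generic matrices, hence Cohen-Macaulay by \cite[Corollary 2.8]{BV88}; so $H^j_{\mathfrak m}(R/I_q)$ is concentrated in the single degree $j=d_q=\dim(R/I_q)$. Plugging this in, for each fixed $I_q$ the relevant page of the spectral sequence reduces to the augmented simplicial cochain complex of the order complex of the open interval $(I_q,1_{\mc Q_{J_G}})$, tensored over $K$ with $H^{d_q}_{\mathfrak m}(R/I_q)$. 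One then checks that the spectral sequence degenerates (there is no room for higher differentials, by comparing cohomological degree with internal degree on the modules $H^{d_q}_{\mathfrak m}(R/I_q)$) and that the resulting filtration splits, which gives
\[
H^i_{\mathfrak m}(R/J_G)\ \cong\ \bigoplus_{I_q\in\mc Q_{J_G}}\widetilde H^{i-d_q-1}\bigl((I_q,1_{\mc Q_{J_G}});K\bigr)\otimes_K H^{d_q}_{\mathfrak m}(R/I_q).
\]
Since $H^{d_q}_{\mathfrak m}(R/I_q)\neq0$ for every $q$, the left-hand side vanishes for a given $i$ precisely when $\widetilde H^{i-d_q-1}((I_q,1_{\mc Q_{J_G}});K)=0$ for all $q$; combining this with the first reduction gives the equivalence of (i) and (ii).

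The hard part will be the spectral sequence analysis: assembling the iteration cleanly, proving its degeneration, and excluding extension problems, so that $H^i_{\mathfrak m}(R/J_G)$ really decomposes as the displayed direct sum. This is exactly where the radicality of all partial sums (equivalently, the fact that $\mc Q_{J_G}$ is built out of ideals of type $P_T(H)$) is indispensable, and it also demands careful bookkeeping of the internal grading in order to pin down the shift $-d_q-1$. Once this homological input is available, the translation into the vanishing of reduced cohomology of order complexes of intervals, in the spirit of Reisner's criterion for monomial ideals, is routine.
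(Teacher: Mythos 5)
You should note at the outset that the paper contains no proof of this statement: it is quoted verbatim from \cite[Corollary 3.1]{A19}, so there is no internal argument to compare with. Judged on its own terms, your outline does reconstruct the strategy actually used by \`Alvarez Montaner: decompose $J_G=\bigcap_{S\in\mc C(G)}P_S(G)$, run the associated Mayer--Vietoris-type spectral sequence for local cohomology (the technique of \`Alvarez Montaner, Boix and Zarzuela), use that each poset element $I_q$ is of type $P_T(H)$ so that $R/I_q$ is Cohen--Macaulay by \cite[Corollary 2.8]{BV88} and its local cohomology is concentrated in degree $d_q$, and observe that $H^{d_q}_{\mathfrak m}(R/I_q)\neq 0$ together with the standard depth characterization reduces (i)$\Leftrightarrow$(ii) to a Hochster-type decomposition of $H^i_{\mathfrak m}(R/J_G)$ indexed by $\mc Q_{J_G}$. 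Your remark that only vanishing is needed (so the splitting of the filtration is dispensable) is also correct.

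However, there is a genuine gap exactly where you defer the work: the identification of the $E_2$-page with the augmented cochain complexes of the order complexes of the intervals $(I_q,1_{\mc Q_{J_G}})$ and, above all, the degeneration of the spectral sequence at $E_2$ \emph{are} the theorem, and your parenthetical justification (``no room for higher differentials, by comparing cohomological degree with internal degree'') is not an argument --- the modules $H^{d_q}_{\mathfrak m}(R/I_q)$ for different $q$ are not separated by the standard grading, and the degeneration in \cite{A19} relies on a careful use of the finer $\mathbb Z^n$-multigrading and on the specific structure of the ideals $I_q$. Degeneration is indispensable for (i)$\Rightarrow$(ii): without it, vanishing of the abutment does not force vanishing of the $E_2$-terms, since they could cancel against each other via higher differentials. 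There is also a smaller inaccuracy you would need to repair: a sum of minimal primes $P_S(G)$ is in general \emph{not} of the form $P_T(H)$ (ideals of that form are prime, while the sums need not be --- see the ideal $P_1+P_2$ in the paper's own example); what is true, and what makes the recursion defining $\mc Q_{J_G}$ close up, is that any such sum equals $(x_i,y_i: i\in T)+J_{H}$ for a suitable graph $H$, hence is radical with minimal primes again of type $P_{T'}(H')$, and this needs to be proved rather than asserted. As written, your text is a faithful road map of \cite{A19}, not a self-contained proof.
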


Here, $\widetilde H^i((I_q,1_{\mc Q_{J_G}}); K)$ denotes the $i$-th \textit{reduced cohomology group} of the interval $(I_q,1_{\mc Q_{J_G}})$ over the field $K$ (for more details, see \cite[Section 1.5]{W07}). We are now ready to prove the main result of this section.

\begin{theorem} \label{T.CMimpliesRCSP}
Let $G$ be a graph. If $J_G$ is Cohen-Macaulay, then $G$ is accessible.
\end{theorem}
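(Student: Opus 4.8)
The plan is to prove the contrapositive in two stages: first show that Cohen-Macaulayness implies unmixedness, then show that, assuming unmixedness, Cohen-Macaulayness implies that $\mc C(G)$ is an accessible set system. The first stage is standard: a Cohen-Macaulay ring is equidimensional (since $R/J_G$ is a quotient of a polynomial ring, hence $R$ is catenary and Cohen-Macaulay, so all minimal primes have the same height), so $J_G$ is unmixed and $\dim(R/J_G) = n + c$ where $c$ is the number of connected components of $G$. By Remark \ref{R.connected}, we may assume $G$ is connected, so $\dim(R/J_G) = n+1$; equivalently $c_G(S) = |S| + 1$ for every $S \in \mc C(G)$.

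For the second stage I would argue by contradiction using Theorem \ref{T.Poset}. Suppose $G$ is connected with $J_G$ unmixed but $\mc C(G)$ not accessible: pick a non-empty cut set $S \in \mc C(G)$ that is \emph{minimal} among the cut sets for which no element can be removed to stay a cut set, i.e.\ $S \setminus \{s\} \notin \mc C(G)$ for all $s \in S$, but every proper cut subset... — more precisely, I would choose $S$ of minimum cardinality among the non-empty cut sets $T$ with $T\setminus\{t\}\notin\mc C(G)$ for all $t\in T$. The key object is the prime $I_q = P_S(G) \in \mc Q_{J_G}$ and the interval $(I_q, 1_{\mc Q_{J_G}})$, which by Remark \ref{R.Poset} consists of all (radical) ideals in $\mc Q_{J_G}$ strictly containing $P_S(G)$. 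Each such ideal contains $P_S(G) + P_U(G)$ for some cut set $U \neq S$, and one computes that a sum $P_S(G) + P_U(G)$ has the form $P_W(H)$ for a suitable $W \supseteq S$. The strategy is to analyze the order complex (equivalently, the reduced cohomology) of this interval: I want to show that under the failure of accessibility at $S$, the interval $(I_q, 1_{\mc Q_{J_G}})$ has nonvanishing reduced cohomology $\widetilde H^{r - d_q - 1}$ in a degree $r \neq n+1$, contradicting Theorem \ref{T.Poset}(ii). Since $d_q = \dim(R/P_S(G)) = n - |S| + c_G(S) = n - |S| + (|S|+1) = n+1$ by unmixedness, the target is to show $\widetilde H^{-1}((I_q,1_{\mc Q_{J_G}}); K) \neq 0$, i.e.\ that the interval $(I_q, 1_{\mc Q_{J_G}})$ is \emph{empty}: that would give $\widetilde H^{-1}(\emptyset) = K \neq 0$ in degree $r = d_q = n+1$... wait — that degree is excluded, so in fact I need the opposite: I must show the interval is non-empty but has the homotopy type forcing cohomology off the top degree. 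Let me restate the real target: show that $(I_q, 1_{\mc Q_{J_G}})$ is non-empty and, because no $P_{S\setminus\{s\}}(G)$ appears in it (accessibility fails), every element of it corresponds to some $P_W$ with $W \not\subseteq S$ "going the wrong way", and the poset structure collapses to something with a cone point or with cohomology concentrated in the wrong degree. Concretely I expect to show the interval is \emph{contractible} (has a minimum, namely $P_\emptyset(G)$ together with the sums through it, which by the example is an absorbing element), hence all reduced cohomology vanishes — and then a dimension/Euler-characteristic count using unmixedness shows that forces $d_q + 1$-many...

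The honest version of the plan: the main obstacle is pinning down exactly which reduced cohomology group of $(I_q, 1_{\mc Q_{J_G}})$ is nonzero. I would identify the interval combinatorially — its elements are the $P_W(H)$ obtained from $P_S(G)$ by successively adding other minimal primes — and compare it with the analogous poset for the subgraph $G\setminus S$ or for links at cut vertices. The cleanest route is probably: show that if $\mc C(G)$ is not accessible, witnessed by $S$, then one can realize an order complex that is a sphere of dimension $\neq d_q - 1$ sitting inside $(I_q,1_{\mc Q_{J_G}})$ as a retract, or exhibit a single missing "face" $P_{S\setminus\{s\}}(G)$ whose absence changes the Euler characteristic. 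The payoff computation is the unmixedness identity $c_G(S) = |S| + c$, which fixes all the dimensions $d_q$ and makes the single "bad" degree $r = \dim(R/J_G)$ unambiguous, so any surviving cohomology is automatically in a forbidden degree. I expect the delicate point to be verifying that the interval is not merely non-contractible but has cohomology in the \emph{specific} wrong degree — this likely requires the structural fact (to be proved, e.g.\ via the decomposition of $P_S(G) + P_U(G)$) that the interval is itself the augmented face poset of the order complex of a smaller instance of the same kind of poset, so one can induct on $|V(G)|$.
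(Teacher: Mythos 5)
Your first stage (Cohen--Macaulay implies unmixed) is fine, and you have correctly identified the right tool, namely Theorem \ref{T.Poset}. But the second stage has a genuine gap: you never pin down the poset element at which the forbidden cohomology lives, and your one concrete attempt --- taking $I_q=P_S(G)$ for a witness cut set $S$ --- cannot work, as you yourself notice: with the paper's convention $(I_q,1_{\mc Q_{J_G}})=\{I_r : I_r\subsetneq I_q\}$ (note you state the interval as ideals \emph{containing} $P_S$, which reverses the reverse-inclusion order), the interval below a minimal prime is empty, $d_q=n-|S|+c_G(S)=n+1$ by unmixedness, and $\widetilde H^{-1}(\emptyset)\neq 0$ lands exactly in the allowed degree $r=\dim(R/J_G)$. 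Everything after that in your proposal (contractibility, a sphere as a retract, a ``missing face'', induction on $|V(G)|$) is speculation with no argument attached; the ``main obstacle'' you name is precisely the content of the proof.

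The missing idea is to move from $P_S$ to a suitable \emph{sum} of minimal primes. The paper considers $\mc A=\{P_S+P_U : U\in\mc C(G),\, U\subsetneq S\}$ (non-empty since $P_S+P_\emptyset\in\mc A$), picks $P_S+P_T$ minimal in $\mc A$ by inclusion, and checks that $P_S+P_T=(x_i,y_i : i\in S)+J_{\widetilde H_1}+\cdots+J_{\widetilde H_c}$ is prime (here unmixedness gives $c=|T|+1$), hence an element of $\mc Q_{J_G}$. The key claim is that no element of $\mc Q_{J_G}$ lies strictly between $P_S$ and $P_S+P_T$: any such $I$ contains some $P_U$ with $U\neq S$ by Remark \ref{R.Poset}; if $U\nsubseteq S$ then $x_u,y_u\in I\subsetneq P_S+P_T$ for $u\in U\setminus S$, impossible, and if $U\subsetneq S$ then $P_S+P_U\in\mc A$ contradicts minimality. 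Hence $P_S$ is an isolated point of $(P_S+P_T,1_{\mc Q_{J_G}})$, which also contains $P_T$, so the interval is disconnected and $\widetilde H^0\neq 0$. Finally, failure of accessibility at $S$ forces $|T|<|S|-1$, so $d=\dim(R/(P_S+P_T))=n-|S|+|T|+1$ satisfies $d+1<n+1=\dim(R/J_G)$, and the nonvanishing $\widetilde H^0$ sits in the forbidden degree $r=d+1$, contradicting Theorem \ref{T.Poset}. Note also that the minimality that matters is minimality of the sum $P_S+P_T$ inside $\mc A$, not minimality of the cardinality of the witness $S$ as you proposed.
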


\begin{proof}
In this proof we only deal with cut sets of the graph $G$, hence we simply write $P_S$ in place of $P_S(G)$ for every $S \in \mc C(G)$.

By Remark \ref{R.connected}, we may assume $G$ connected. By contradiction, suppose that $G$ is not accessible. Since $J_G$ is unmixed, there exists a non-empty cut set $S$ of $G$ such that $S \setminus \{s\} \notin \mc C(G)$ for every $s \in S$.
Consider the finite set
\[
\mc A = \{P_S+P_U : U \in \mc C(G), U \subsetneq S \},
\]
which is not empty because $P_S+P_{\emptyset} \in \mc A$. Let $P_S+P_T$ be a minimal element of $\mc A$ with respect to the inclusion. Notice that
\[
P_S + P_T = (x_i,y_i : i \in S) + J_{\widetilde H_1} + \cdots + J_{\widetilde H_c}.
\]
where $H_1,\dots,H_c$ are the connected components of $G \setminus T$ from which we remove the elements of $S \setminus T$ and $c=|T|+1$ since $J_G$ is unmixed. In particular, $P_S + P_T$ is a prime ideal, and hence, it is an element of the poset $\mc Q_{J_G}$.

\textbf{Claim:} There are no ideals $I$ in $\mc Q_{J_G}$ such that $P_S \subsetneq I \subsetneq P_S+P_T$, i.e., the open interval $(P_S+P_T, P_S)$ is empty.

Suppose by contradiction that the claim is not true. Since $P_S \subsetneq I$, by Remark \ref{R.Poset} such an ideal $I$ has to contain at least another prime ideal of the form $P_U$ for some $U \in \mc C(G)$ and $U \neq S$. We distinguish between two cases.
\begin{itemize}
\item[1)] If $U \nsubseteq S$, then there exists $u \in U \setminus S$ and $x_u, y_u \in P_U \subseteq I \subsetneq P_S+P_T$. On the other hand, $x_u, y_u \notin P_S+P_T$ yields a contradiction.
\item[2)] If $U \subsetneq S$, then $P_S + P_U \in \mc A$. It follows that $P_S + P_U \subseteq I \subsetneq P_S + P_T$ which contradicts the minimality of $P_S+P_T$ in $\mc A$.
\end{itemize}

Thus, the claim holds and hence, the point $P_S$ is isolated in the open interval $(P_S+P_T, 1_{\mc Q_{J_G}})$. It follows that the open interval $(P_S+P_T, 1_{\mc Q_{J_G}})$ consists of at least two connected components, one of which is the isolated point $P_S$ and another component containing $P_T$. Hence,
\[
\dim_K \widetilde H^0((P_S+P_T,1_{\mc Q_{J_G}}); K) > 0
\]
because $\dim_K \widetilde H^0(\mc P; K)$ equals the number of connected components of the poset $\mc P$ minus 1 (see \cite[Proposition 2.7 and page 110]{H02}).

Now, set $d = \dim(R/(P_S+P_T)) = n-|S|+c = n-|S|+|T|+1$, where $n=|V(G)|$. Since $G$ is not accessible, we have $|T| < |S| - 1$; thus, $d+1 = n-|S|+|T|+2 < n-1+2 = n+1 = \dim(R/J_G)$, where the last equality follows from the unmixedness of $J_G$. If $r=d+1$, this implies that
\[
\dim_K \widetilde H^{r-d-1}((P_S+P_T,1_{\mc Q_{J_G}}); K) = \dim_K \widetilde H^0((P_S+P_T,1_{\mc Q_{J_G}}); K) > 0.
\]

By Theorem \ref{T.Poset}, it follows that $J_G$ is not Cohen-Macaulay, a contradiction.
\end{proof}

Theorem \ref{T.CMimpliesRCSP} has many consequences on the combinatorics of the graphs with Cohen-Macaulay binomial edge ideal, which we will explore in Section 4. Here we want to show how Theorem \ref{T.CMimpliesRCSP} is useful to prove that a binomial edge ideal is not Cohen-Macaulay.

\begin{example}
In \cite[Examples 2 and 3]{R13} and \cite[Figure 8]{R19}, Rinaldo considers the graph $G$ in Figure \ref{F.UnmixedNonCMGraphs} and the graph $H$ in Figure \ref{F.notAccessible}, showing by symbolic computation that their binomial edge ideals are unmixed and not Cohen-Macaulay. This last fact can be easily shown by Theorem \ref{T.CMimpliesRCSP} just by looking at the cut sets of the two graphs:
\[
\begin{array}{cl}
\mc C(G) =&\!\!\! \{\emptyset,\{2\},\{6\},\{7\},\{2,6\},\{2,7\},\{3,5\},\{3,7\},\{5,6\},\{6,7\}, \{2,3,7\}, \{2,4,6\}, \{2,4,7\}, \{2,5,6\},\\[-2.5mm]
 & \!\! \{2,6,7\}, \{3,5,6\}, \{3,5,7\}, \{2,4,6,7\}\}, \\[-2.5mm]
\mc C(H) =&\!\!\! \{\emptyset,\{2\},\{6\},\{2,6\},\{3,5\},\{2,4,6\}\}.
\end{array}
\]
In both graphs, $\{3,5\}$ is a cut set, but $\{3\}$ and $\{5\}$ are not cut vertices and, hence, $G$ and $H$ are not accessible. Thus, $J_G$ and $J_H$ are not Cohen-Macaulay. Notice that, since $H$ is bipartite, the non-Cohen-Macaulayness of $J_H$ is also a consequence of \cite[Example 5.4]{BMS18}.

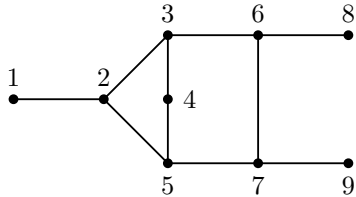
\begin{figure}[ht!]
\begin{tikzpicture}[scale=1.2]
\node[label={above:{\small $1$}}] (a) at (0,0) {};
\node[label={above:{\small $2$}}] (b) at (1,0) {};
\node[label={above:{\small $3$}}] (c) at (1.71,0.71) {};
\node[label={right:{\small $4$}}] (d) at (1.71,0) {};
\node[label={below:{\small $5$}}] (e) at (1.71,-0.71) {};
\node[label={above:{\small $6$}}] (f) at (2.71,0.71) {};
\node[label={below:{\small $7$}}] (g) at (2.71,-0.71) {};
\node[label={above:{\small $8$}}] (h) at (3.71,0.71) {};
\node[label={below:{\small $9$}}] (i) at (3.71,-0.71) {};
\draw (0,0) -- (1,0) -- (1.71,0.71) -- (2.71,0.71) -- (3.71,0.71)
(1,0) -- (1.71,-0.71) -- (1.71,0) -- (1.71,0.71)
(1.71,-0.71) -- (2.71,-0.71) -- (3.71,-0.71)
(2.71,0.71) -- (2.71,-0.71);
\end{tikzpicture}
\caption{The graph $G$}\label{F.UnmixedNonCMGraphs}
\end{figure}
\end{example}

Another interesting application is that \cite[Conjecture 1.6]{BV15} of Benedetti and Varbaro on the diameter of the dual graph holds for all binomial edge ideals, extending \cite[Corollary 6.3]{BMS18}. To explain this, we recall the setting.

Given an ideal $I$ in a polynomial ring $R=K[x_1,\dots, x_n]$, with minimal primes $\mathfrak p_1, \dots, \mathfrak p_r$, the \textit{dual graph}, $\mathcal{D}(I)$, of $I$ is the graph with vertex set $\{\mathfrak p_1, \dots, \mathfrak p_r\}$ and edge set
\[
\{\{\mathfrak p_i, \mathfrak p_j\} : \hgt(\mathfrak p_i+\mathfrak p_j)-1=\hgt(\mathfrak p_i)=\hgt(\mathfrak p_j)=\hgt(I)\}.
\]
The notion of dual graph is implicit in the proof of \textit{Hartshorne Connectedness Theorem} \cite{H62}, which implies that $\mathcal{D}(I)$ is connected if $R/I$ satisfies the Serre's condition $(S_2)$ and, in particular, if $I$ is Cohen-Macaulay.

In \cite[Theorem 5.2]{BMS18}, we describe the dual graph of an unmixed binomial edge ideal $J_G$ in terms of the underlying graph $G$. Moreover, if $G$ is bipartite, we prove that $J_G$ Cohen-Macaulay is equivalent to $\mathcal{D}(J_G)$ connected, which in turn is equivalent to $G$ accessible.

Recall that the \textit{diameter}, ${\rm diam}(G)$, of a graph $G$ is the maximal distance between two of its vertices and a homogeneous ideal is called \textit{Hirsch} if ${\rm diam}(\mathcal D(I)) \leq \hgt(I)$. In \cite[Conjecture 1.6]{BV15}, Benedetti and Varbaro conjecture that every Cohen-Macaulay homogeneous ideal generated in degree two is Hirsch. In \cite[Corollary 6.3]{BMS18}, we essentially prove that $J_G$ is Hirsch if $G$ is accessible. Hence, with the same argument, Theorem \ref{T.CMimpliesRCSP} immediately implies the following result:

\begin{corollary}\label{C.Hirsch}
If $J_G$ is Cohen-Macaulay, then it is Hirsch. In other words, \cite[Conjecture 1.6]{BV15} is true for all binomial edge ideals.
\end{corollary}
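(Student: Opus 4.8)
The plan is to combine Theorem~\ref{T.CMimpliesRCSP} with the explicit description of the dual graph of an unmixed binomial edge ideal in \cite[Theorem 5.2]{BMS18}, essentially re-running the proof of \cite[Corollary 6.3]{BMS18} with accessibility in place of the bipartiteness used there. Since $J_G$ Cohen-Macaulay forces $R/J_G$ to satisfy Serre's condition $(S_2)$, the dual graph $\mathcal D(J_G)$ is already known to be connected, so the only issue is to bound its diameter by $\hgt(J_G)$. By Theorem~\ref{T.CMimpliesRCSP}, $G$ is accessible; in particular $J_G$ is unmixed, so, writing $n=|V(G)|$ and $c$ for the number of connected components of $G$, we have $\hgt(J_G)=n-c$ and the minimal primes of $J_G$ are exactly the $P_S(G)$ with $S\in\mc C(G)$.

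First I would use the fact that $\mc C(G)$ is an accessible set system to produce, for every $S\in\mc C(G)$, a chain of cut sets $S=S_0\supsetneq S_1\supsetneq\cdots\supsetneq S_{|S|}=\emptyset$ with $|S_i\setminus S_{i+1}|=1$. Next I would check that consecutive vertices of this chain are adjacent in $\mathcal D(J_G)$: if $S_{i+1}=S_i\setminus\{s\}$ with both cut sets, then $s$ is a cut vertex of $G\setminus S_{i+1}$, and a short computation modulo $(x_s,y_s)$ shows $P_{S_i}(G)+P_{S_{i+1}}(G)=(x_s,y_s)+P_{S_{i+1}}(G)$; adjoining $(x_s,y_s)$ affects only the clique summand of $P_{S_{i+1}}(G)$ attached to the component of $G\setminus S_{i+1}$ containing $s$, raising its height by exactly one, so $\hgt(P_{S_i}(G)+P_{S_{i+1}}(G))=\hgt(J_G)+1$, which is the adjacency condition. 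Hence $\mathrm{dist}_{\mathcal D(J_G)}(P_S(G),P_\emptyset(G))\le|S|$, and concatenating two such paths through $P_\emptyset(G)$ gives $\mathrm{dist}_{\mathcal D(J_G)}(P_S(G),P_T(G))\le|S|+|T|$ for all $S,T\in\mc C(G)$.

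It remains to bound $|S|+|T|$ by $\hgt(J_G)$. Unmixedness gives $c_G(S)=|S|+c$, and since $G\setminus S$ has $n-|S|$ vertices it has at most $n-|S|$ connected components, so $|S|+c\le n-|S|$, i.e. $|S|\le(n-c)/2$, and likewise $|T|\le(n-c)/2$; therefore $|S|+|T|\le n-c=\hgt(J_G)$. This yields $\mathrm{diam}(\mathcal D(J_G))\le\hgt(J_G)$, i.e. $J_G$ is Hirsch; since $J_G$ is generated in degree two, this is precisely the assertion of \cite[Conjecture 1.6]{BV15} for $J_G$.

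I expect the main technical point to be the verification that each step of the chain is an edge of $\mathcal D(J_G)$, i.e. the height identity for $P_{S_i}(G)+P_{S_{i+1}}(G)$; this is where the structure of the ideals $P_S(G)$ is genuinely used (and where \cite[Theorem 5.2]{BMS18} does the work), whereas the cardinality bound and the path concatenation are formal. A secondary annoyance is the reduction to $G$ connected: since $P_S(G)=\sum_i P_{S\cap V(G_i)}(G_i)$ in disjoint sets of variables, both the edges of $\mathcal D(J_G)$ and the heights $\hgt(P_S(G)+P_T(G))$ decompose componentwise, so one must either observe that the chains above can be chosen to modify a single connected component at a time — which preserves adjacency — or invoke Remark~\ref{R.connected} together with the componentwise additivity of $\hgt$ and of the relevant distances.
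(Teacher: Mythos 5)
Your proposal is correct and follows essentially the same route as the paper: the paper deduces the corollary by combining Theorem \ref{T.CMimpliesRCSP} with the argument of \cite[Corollary 6.3]{BMS18} showing that accessible graphs yield Hirsch ideals, which is exactly what you do, except that you spell out that argument (the chain of cut sets $S=S_0\supsetneq\cdots\supsetneq S_{|S|}=\emptyset$, the identity $P_{S_i}(G)+P_{S_{i+1}}(G)=(x_s,y_s)+P_{S_{i+1}}(G)$ with its height raised by one, and the bound $|S|\le(n-c)/2$ from unmixedness) rather than citing it. All the details you supply check out, so this is a valid, slightly more self-contained version of the paper's proof.
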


\section{Accessible graphs}

In this section we focus on the combinatorial properties of accessible graphs. The purpose is twofold. First, by Theorem \ref{T.CMimpliesRCSP} these turn out to be combinatorial properties of the graphs whose binomial edge ideal is Cohen-Macaulay. Second, the results proved in this section will be of crucial importance in Section \ref{S.Chordal}, where we prove the converse of Theorem \ref{T.CMimpliesRCSP} for chordal and traceable graphs.

\subsection{Combinatorial properties of accessible graphs}

In \cite[Proposition 3.10]{BN17}, it is proved that if $G$ is connected and $J_G$ satisfies the Serre's condition $(S_2)$, then either $G$ is complete or it has at least one cut vertex. In particular, this holds for Cohen-Macaulay binomial edge ideals. In the next lemma we prove that a stronger property holds if $G$ is accessible and hence, a fortiori, if $J_G$ is Cohen-Macaulay (by Theorem \ref{T.CMimpliesRCSP}).

\begin{lemma}\label{L.cutSetsContainCutVertex}
Let $G$ be an accessible graph. Then every non-empty cut set of $G$ contains a cut vertex.
\end{lemma}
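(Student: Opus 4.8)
I want to prove that in an accessible graph $G$, every non-empty cut set $S$ contains at least one cut vertex. I'll argue by induction on $|S|$, exploiting the defining property of accessible set systems: from $S$ one can peel off a single vertex and stay inside $\mc C(G)$.

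**First step.** If $|S| = 1$, then $S = \{s\}$ is itself a cut set of cardinality one, which by definition means $s$ is a cut vertex, so there is nothing to prove. This is the base case.

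**Inductive step.** Suppose $|S| \geq 2$ and the claim holds for all cut sets of smaller cardinality. Since $G$ is accessible, there is some $s \in S$ with $S' := S \setminus \{s\} \in \mc C(G)$. Now $S'$ is a non-empty cut set (it is non-empty because $|S| \geq 2$), so by the inductive hypothesis $S'$ contains a cut vertex $v$. Since $v \in S' \subseteq S$, the cut set $S$ contains the cut vertex $v$, and we are done. The only subtlety is that $S'$ must be non-empty to invoke the inductive hypothesis, which is exactly guaranteed by $|S| \geq 2$; for $|S| = 1$ we fall back on the base case. I would write this up as a clean induction, and I don't anticipate any real obstacle — the statement is essentially an immediate formal consequence of the definition of accessibility, the whole point being that accessibility was precisely designed so that cut sets decompose down to cut vertices one element at a time. (Note in particular that unmixedness of $J_G$, the other half of the definition of "accessible", is not even needed here; only the accessible-set-system property is used.)
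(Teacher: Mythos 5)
Your proof is correct and coincides with the paper's own argument: the same induction on $|S|$, with the base case $|S|=1$ being a cut vertex by definition and the inductive step peeling off a vertex via accessibility. Your remark that only the accessible-set-system property (not unmixedness) is used also matches the paper's proof.
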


\begin{proof}
Let $S \in \mc C(G)$, $S \neq \emptyset$. We proceed by induction on the cardinality of $|S|$. If $|S|=1$, the claim follows. Otherwise, since $G$ is accessible, there exists $s \in S$ such that $S \setminus \{s\} \in \mc C(G)$. By induction there exists a cut vertex $v \in S \setminus \{s\}$ and the same holds for $S$.
\end{proof}

\begin{remark}\label{R.NoCutVerticesComplete}
By Lemma \ref{L.cutSetsContainCutVertex} it follows that, if a graph $G$ is accessible and has no cut vertices, then the only cut set of $G$ is the empty set and the connected components of $G$ are complete by Remark \ref{R.NoCutSetsComplete}.
\end{remark}

\begin{example} \label{E.NonRCSPgraphWithCutVertexInEveryCutSet}
The converse of Lemma \ref{L.cutSetsContainCutVertex} is not true in general. For instance, let $G$ be the graph in Figure \ref{F.Lemma_cutSetsContainingCutVertices} whose cut sets are $\mc C(G) = \{\emptyset, \{7\}, \{8\}, \{7, 8\}, \{6, 8, 9\}, \{7, 8, 9\}, \{6, 7, 8, 9\}\}$.
Notice that every non-empty cut set contains either $7$ or $8$, which are the cut vertices of $G$, but removing any vertex from $\{6,8,9\}$ does not produce a cut set. We also notice that in this case $J_G$ is unmixed and the dual graph of $J_G$ is connected, but $G$ is not accessible and, in particular, $J_G$ is not Cohen-Macaulay.

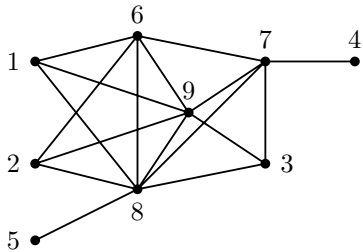
\begin{figure}[ht!]
\centering
\begin{tikzpicture}[scale=1.7]
\node[label={left:{\small $1$}}] (a) at (-0.2,0.4) {};
\node[label={left:{\small $2$}}] (b) at (-0.2,-0.4) {};
\node[label={above:{\small $6$}}] (c) at (0.6,0.6) {};
\node[label={below:{\small $8$}}] (d) at (0.6,-0.6) {};
\node[label={above:{\small $9$}}] (e) at (1,0) {};
\node[label={above:{\small $7$}}] (f) at (1.6,0.4) {};
\node[label={right:{\small $3$}}] (g) at (1.6,-0.4) {};
\node[label={above:{\small $4$}}] (h) at (2.3,0.4) {};
\node[label={left:{\small $5$}}] (i) at (-0.2,-1) {};
\draw (-0.2,-1) -- (0.6,-0.6) -- (-0.2,-0.4) -- (1,0) -- (-0.2,0.4) -- (0.6,0.6) -- (-0.2,-0.4)
(-0.2,0.4) -- (0.6,-0.6) -- (1,0) -- (0.6,0.6) -- (0.6,-0.6) -- (1.6,-0.4) -- (1,0) -- (1.6,0.4) -- (2.3,0.4)
(0.6,0.6) -- (1.6,0.4) -- (0.6,-0.6)
(1.6,-0.4) -- (1.6,0.4);
\end{tikzpicture}
\caption{A graph $G$ with a cut vertex in each non-empty cut set} \label{F.Lemma_cutSetsContainingCutVertices}
\end{figure}
\end{example}

In the next section we will deal with graphs obtained by \textit{completing the neighbourhood of a vertex}. More precisely we recall the following definition:

\begin{definition}
Let $G$ be a graph and let $v$ be a vertex of $G$. We denote by $G_v$ the graph obtained by connecting any two neighbours of $v$, i.e.,
\[
V(G_v)=V(G) \text{ and } E(G_v)=E(G) \cup \{ \{u,w\} : u,w \in N_G(v), u \neq w \}.
\]
\end{definition}

Several properties of $G$ behave well with respect to completing the neighbourhood of a vertex, including being accessible.

\begin{lemma}\label{L.completingNeighborsVertex}
Let $v$ be a vertex of a graph $G$. Then the following properties hold:
\begin{itemize}
\item[\rm (1)] $\mc C(G_v) = \{S \in \mc C(G) : v \notin S\}$;
\item[\rm (2)] if $J_G$ is unmixed, then $J_{G_v}$ is unmixed;
\item[\rm (3)] if $G$ is accessible, then $G_v$ is accessible.
\end{itemize}
\end{lemma}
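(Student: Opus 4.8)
The plan is to handle the three statements in order, deducing each from the previous one. The heart of the matter is part (1): the cut sets of $G_v$ are exactly the cut sets of $G$ not containing $v$.

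For part (1), the key observation is a comparison between $G \setminus S$ and $G_v \setminus S$ for a subset $S \subseteq V(G)$. If $v \in S$, then removing $v$ destroys all the extra edges we added, so $G_v \setminus S = G \setminus S$; but I want to show such $S$ are never cut sets of $G_v$ — actually the cleanest route is to show first that if $v \notin S$ then $c_{G_v}(S) = c_G(S)$, because adding edges among the neighbours of $v$ (all of which survive in $G_v \setminus S$ and are all in the same component as $v$, since $v \notin S$) does not merge any components: any two neighbours of $v$ are already connected through $v$ in $G \setminus S$. Conversely, no vertices get separated since we only added edges. Hence for $v \notin S$, the function $c_{G_v}$ agrees with $c_G$ on all subsets of $V(G) \setminus \{v\}$, so $S$ is a cut set of $G_v$ iff it is a cut set of $G$. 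It remains to rule out cut sets $S$ of $G_v$ with $v \in S$: for such $S$, every neighbour $w$ of $v$ in $G$ has all of $N_G(v) \setminus \{w\}$ as neighbours in $G_v$, so in $G_v \setminus (S \setminus \{v\})$ the vertex $v$ together with the clique on $N_G(v)$ forms a single component, and adding $v$ back to $G_v \setminus S$ cannot increase the component count by more than... — more directly, I claim $v$ does not \emph{reconnect} anything: the components of $G_v \setminus S$ that meet $N_G(v)$ are already pairwise merged inside $G_v \setminus (S\setminus\{v\})$ without using $v$, because $N_G(v)$ spans a clique in $G_v$. So $c_{G_v}(S \setminus \{v\}) = c_{G_v}(S)$, contradicting $S \in \mc C(G_v)$. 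This proves (1).

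Part (2) is then immediate from (1): if $J_G$ is unmixed, then $c_G(S) = |S| + c$ for every $S \in \mc C(G)$, where $c$ is the number of connected components of $G$. Since $G_v$ has the same connected components as $G$ (we only add edges within one component), $c$ is unchanged, and by (1) every $S \in \mc C(G_v)$ is a cut set of $G$ with $v \notin S$, for which $c_{G_v}(S) = c_G(S) = |S| + c$ by the component-count equality established in (1). Hence $J_{G_v}$ is unmixed. Part (3) combines (1) and (2): $J_{G_v}$ is unmixed by (2), and for accessibility, take a non-empty $S \in \mc C(G_v)$; by (1), $S \in \mc C(G)$ and $v \notin S$, so since $G$ is accessible there is $s \in S$ with $S \setminus \{s\} \in \mc C(G)$, and since $v \notin S \setminus \{s\}$ part (1) gives $S \setminus \{s\} \in \mc C(G_v)$. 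Thus $\mc C(G_v)$ is an accessible set system and $G_v$ is accessible.

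The main obstacle is the careful case analysis in part (1), specifically the claim that a cut set of $G_v$ cannot contain $v$ — one must argue precisely that the clique $G_v$ puts on $N_G(v)$ makes $v$ redundant for reconnecting components, using the definition of cut set via $c_{G_v}(S \setminus \{v\}) < c_{G_v}(S)$. Once the equality $c_{G_v}(S) = c_G(S)$ for $v \notin S$ and the redundancy of $v$ are nailed down, parts (2) and (3) are formal.
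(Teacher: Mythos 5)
Your proposal is correct and takes essentially the same route as the paper: both rest on the observation that $c_{G_v}(W)=c_G(W)$ for every $W$ with $v\notin W$, plus the fact that no cut set of $G_v$ contains $v$ — the paper gets the latter by citing that $v$ is a free vertex of $G_v$ (\cite[Proposition 2.1]{RR14}), while you reprove it directly via the clique on $N_G(v)$. (Only a cosmetic point: if $N_G(v)\subseteq S$ then $c_{G_v}(S\setminus\{v\})=c_{G_v}(S)+1$ rather than equality, but this still violates the cut-set condition, so your contradiction stands.)
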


\begin{proof}
Notice that for every $W \subseteq V(G)$ with $v \notin W$ we have
\[
c_G(W) = c_{G_v}(W).
\]
(1) Let $S \in \mc C(G)$ and $v \notin S$. Then, for every $w \in S$, $c_{G_v}(S \setminus \{w\}) = c_G(S \setminus \{w\}) < c_G(S) = c_{G_v}(S)$. Conversely, let $S \in \mc C(G_v)$. Hence, $v \notin S$ since $v$ is a free vertex of $G_v$ (see \cite[Proposition 2.1]{RR14}). Thus, for every $w \in S$, $c_G(S \setminus \{w\}) = c_{G_v}(S \setminus \{w\}) < c_{G_v}(S) = c_G(S)$.

\noindent (2) and (3) follow from (1) and the formula above. \qedhere
\end{proof}

\begin{example}
The converse of (2) and (3) in Lemma \ref{L.completingNeighborsVertex} does not hold. In fact, let $G$ be the graph in Figure \ref{F.squareWhisker} and $v=3$. Then, the graph $G_v$ has edge set $E(G_v) \! =\! E(G) \cup \{\{2,4\}\}$, see Figure \ref{F.squareWhiskerCompletingNv}. Notice that $\mc C(G) \! =\! \{\emptyset,\! \{2\},\! \{2,4\},\! \{3,5\}\}$ and $\mc C(G_v) \! = \! \{\emptyset,\! \{2\},\! \{2,4\}\}$. In this case, $G_v$ is accessible, but $J_G$ is not unmixed and, hence, $G$ is not accessible. In this case, $J_{G_v}$ is also Cohen-Macaulay.

\begin{center}
\begin{figure}[ht!]
\begin{subfigure}[c]{0.3\textwidth}
\centering
\begin{tikzpicture}[scale=0.9]
\node[label={above:{\small $1$}}] (a) at (0,1.5) {};
\node[label={above:{\small $2$}}] (b) at (1.5,1.5) {};
\node[label={below:{\small $5$}}] (c) at (1.5,0) {};
\node[label={above:{\small $3$}}] (d) at (3,1.5) {};
\node[label={below:{\small $4$}}] (e) at (3,0) {};
\draw (0,1.5) -- (1.5,1.5) -- (3,1.5) -- (3,0) -- (1.5,0) -- (1.5,1.5);
\end{tikzpicture}
\caption{The graph $G$} \label{F.squareWhisker}
\end{subfigure}
\begin{subfigure}[c]{0.3\textwidth}
\centering
\begin{tikzpicture}[scale=0.9]
\node[label={above:{\small $1$}}] (a) at (0,1.5) {};
\node[label={above:{\small $2$}}] (b) at (1.5,1.5) {};
\node[label={below:{\small $5$}}] (c) at (1.5,0) {};
\node[label={above:{\small $3$}}] (d) at (3,1.5) {};
\node[label={below:{\small $4$}}] (e) at (3,0) {};
\draw (0,1.5) -- (1.5,1.5) -- (3,1.5) -- (3,0) -- (1.5,0) -- (1.5,1.5) -- (3,0);
\end{tikzpicture}
\caption{The graph $G_3$} \label{F.squareWhiskerCompletingNv}
\end{subfigure}
\caption{} \label{F.Lemma_completingNeighborsV}
\end{figure}
\end{center}
\end{example}

Another useful operation is the cone from a new vertex over a graph.

\begin{definition}
Let $G$ be a graph and let $v \notin V(G)$. The {\it cone} of $v$ on $G$, denoted by $\mathrm{cone}(v,G)$ is the graph with vertex set $V(G) \cup \{v\}$ and edge set $E(G) \cup \{\{u,v\} : u \in V(G)\}$.
\end{definition}

Some properties of binomial edge ideals of cones are studied in \cite{RR14}. Here, we show that the cone over two disjoint graphs preserves unmixedness, accessibility and Cohen-Macaulayness, proving, in particular, the converse of \cite[Theorem 3.8]{RR14}.

\begin{theorem}\label{T.CMcones}
Let $H_1$ and $H_2$ be connected graphs, $H=H_1 \sqcup H_2$ and let $G = \cone(v,H)$. Then
\begin{itemize}
\item[{\rm (1)}] $\mc C(G) = \{\emptyset\} \cup \{T_1 \sqcup T_2 \sqcup \{v\} : T_i \in \mc C(H_i)\}$;
\item[{\rm (2)}] $J_{H_1}$ and $J_{H_2}$ are unmixed if and only if $J_G$ is unmixed;
\item[{\rm (3)}] $H_1$ and $H_2$ are accessible if and only if $G$ is accessible;
\item[{\rm (4)}] $J_{H_1}$ and $J_{H_2}$ are Cohen-Macaulay if and only if $J_G$ is Cohen–Macaulay.
\end{itemize}
\end{theorem}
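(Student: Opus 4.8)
Throughout set $n=|V(G)|$, $n_i=|V(H_i)|$ (so $n=n_1+n_2+1$), and write $R$, $R_i$, $R''=R_1\otimes_K R_2$ for the polynomial rings on $V(G)$, $V(H_i)$, $V(H_1)\sqcup V(H_2)$. The four items are not independent: (1) is the engine, (2) and (3) follow quickly from (1) plus the unmixedness criterion, and in (4) the ``$\Rightarrow$'' direction is easy while the ``$\Leftarrow$'' direction is where the work lies.

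For (1), I would split a cut set $S$ of $G$ according to whether $v\in S$. If $v\notin S$, then every remaining vertex is still adjacent to $v$, so $G\setminus S$ is connected, $c_G(S)=1$, and no non-empty such $S$ can be a cut set; thus $\emptyset$ is the only cut set avoiding $v$. If $v\in S$, write $S=T\sqcup\{v\}$ with $T_i=T\cap V(H_i)$; then $G\setminus S=(H_1\setminus T_1)\sqcup(H_2\setminus T_2)$, so $c_G(S)=c_{H_1}(T_1)+c_{H_2}(T_2)$. Removing $v$ from $S$ drops this count to $1$, which is automatically a decrease once the $T_i$ are proper subsets (in particular when they are cut sets), while removing $s\in T_i$ reproduces exactly the defining inequality of a cut set inside $H_i$; this gives (1). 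For (2), the unmixedness criterion ``$c(\cdot)=|\cdot|+1$ on every cut set'' applied to $S=T_1\sqcup T_2\sqcup\{v\}$ reads $c_{H_1}(T_1)+c_{H_2}(T_2)=|T_1|+|T_2|+2$, and specializing $T_2=\emptyset$ (resp. $T_1=\emptyset$) isolates the corresponding statement for $H_1$ (resp. $H_2$), yielding both directions. For (3), unmixedness transfers by (2); for the accessible‑set‑system condition, if $\mc C(H_1)$ and $\mc C(H_2)$ are accessible a non‑empty $S\in\mc C(G)$ is either $\{v\}$ (predecessor $\emptyset\in\mc C(G)$) or has some $T_i\neq\emptyset$, and an accessibility witness $t\in T_i$ for $T_i$ inside $H_i$ gives $S\setminus\{t\}\in\mc C(G)$ by (1); conversely a non‑empty $T_1\in\mc C(H_1)$ with $|T_1|\geq 2$ (the singleton case being trivial) gives a non‑empty cut set $S=T_1\cup\{v\}$ whose accessibility witness must lie in $T_1$ (since $T_1=S\setminus\{v\}\notin\mc C(G)$ by (1)), hence witnesses accessibility of $T_1$, and symmetrically for $H_2$.

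For (4), the key observation is that $N_G(v)=V(H)$, so completing the neighbourhood of $v$ makes $G$ complete: $G_v=K_n$. Splitting the minimal primary decomposition $J_G=\bigcap_{S\in\mc C(G)}P_S(G)$ along (1) — using $P_{T_1\sqcup T_2\sqcup\{v\}}(G)=(x_v,y_v)+P_{T_1}(H_1)+P_{T_2}(H_2)$ and flatness over $K$ to pull the intersection inside — produces the Ohtani‑type identity $J_G=J_{K_n}\cap\big((x_v,y_v)+J_{H_1}+J_{H_2}\big)$, where $J_{K_n}=P_\emptyset(G)$; since the edges of $H_i$ are edges of $G$ one also has $(x_v,y_v)+J_{H_1}+J_{H_2}=(x_v,y_v)+J_G$. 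Put $B':=R/\big((x_v,y_v)+J_G\big)\cong R_1/J_{H_1}\otimes_K R_2/J_{H_2}$ and $C:=R/\big(J_{K_n}+(x_v,y_v)\big)\cong R''/J_{K_{n-1}}$, with $K_{n-1}$ the complete graph on $V(H)$. Both $R/J_{K_n}$ and $C$ are quotients by ideals of $2$-minors of generic matrices, hence Cohen–Macaulay, of dimensions $n+1$ and $n$; and by Remark \ref{R.connected}(iv), $B'$ is Cohen–Macaulay if and only if $J_{H_1}$ and $J_{H_2}$ both are. The identity gives the short exact sequence $0\to R/J_G\to R/J_{K_n}\oplus B'\to C\to 0$, which already settles one direction: if $J_{H_1},J_{H_2}$ are Cohen–Macaulay they are unmixed, so $J_G$ is unmixed by (2), $\dim R/J_G=n+1$, $B'$ is Cohen–Macaulay of dimension $n+1$, and the depth lemma gives $\depth R/J_G\geq\min\{n+1,\depth C+1\}=n+1$ — reproving \cite[Theorem 3.8]{RR14}.

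The converse, $J_G$ Cohen–Macaulay $\Rightarrow$ $J_{H_1},J_{H_2}$ Cohen–Macaulay, is the main obstacle: the sequence just written only yields $\depth B'\geq n$, which does not suffice (one must rule out $\depth B'=\dim B'-1$). The plan is to trade it for a better sequence. Assuming $J_G$ Cohen–Macaulay (hence unmixed, hence $\dim B'=n+1$ via (2)), consider the two short exact sequences $0\to (x_v,y_v)(R/J_G)\to R/J_G\to B'\to 0$ and $0\to (x_v,y_v)(R/J_{K_n})\to R/J_{K_n}\to C\to 0$, together with the morphism between them induced by the surjections $R/J_G\twoheadrightarrow R/J_{K_n}$ and $B'\twoheadrightarrow C$ (recall $J_G\subseteq J_{K_n}$). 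The crucial — and purely formal — point is that the left vertical map is an isomorphism: from the sharpened Ohtani identity $J_G=J_{K_n}\cap\big((x_v,y_v)+J_G\big)$ and the second isomorphism theorem, $(x_v,y_v)(R/J_G)=\big((x_v,y_v)+J_G\big)/J_G\xrightarrow{\ \sim\ }\big(J_{K_n}+(x_v,y_v)\big)/J_{K_n}=(x_v,y_v)(R/J_{K_n})$. The snake lemma then collapses the diagram to the short exact sequence $0\to J_{K_n}/J_G\to B'\to C\to 0$. Finally, the depth lemma applied to $0\to J_{K_n}/J_G\to R/J_G\to R/J_{K_n}\to 0$ gives $\depth(J_{K_n}/J_G)\geq\min\{n+1,n+2\}=n+1$, and feeding this into the previous sequence gives $\depth B'\geq\min\{n+1,\depth C+1\}=n+1$; thus $B'$ is Cohen–Macaulay of dimension $n+1$, and Remark \ref{R.connected}(iv) concludes. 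The only genuine difficulty I foresee is locating this second exact sequence and checking the isomorphism of the $(x_v,y_v)$‑torsion parts; after that, two invocations of the depth lemma finish the proof.
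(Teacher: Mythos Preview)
Your treatment of (1)--(3) and of the forward direction of (4) is fine; the short exact sequence $0\to R/J_G\to R/J_{K_n}\oplus B'\to C\to 0$ you extract from the primary decomposition is exactly the right object, and your snake-lemma derivation of $0\to J_{K_n}/J_G\to B'\to C\to 0$ is correct and elegant. The gap is in the very last line. For a short exact sequence $0\to A\to B\to C\to 0$ the Depth Lemma gives
\[
\depth B\ \geq\ \min\{\depth A,\ \depth C\},
\]
with the ``$+1$'' appearing only in the bounds for the \emph{outer} terms ($\depth A\geq\min\{\depth B,\depth C+1\}$ and $\depth C\geq\min\{\depth A-1,\depth B\}$). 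Applied to $0\to J_{K_n}/J_G\to B'\to C\to 0$ with $\depth(J_{K_n}/J_G)\geq n+1$ and $\depth C=n$, you only obtain $\depth B'\geq n$, not $n+1$. Equivalently, in local cohomology the sequence gives an injection $H^n_{\mathfrak m}(B')\hookrightarrow H^n_{\mathfrak m}(C)$, and nothing in your setup forces this map to be zero. So the argument does not rule out $\depth B'=\dim B'-1$, which is precisely the case you said you needed to exclude.

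The paper bypasses this obstruction entirely: instead of squeezing $B'$ between two exact sequences, it invokes the exact formula $\depth(R/J_G)=\min\{\depth(R_H/J_H),\,|V(H)|+2\}$ for cones from \cite[Theorem~3.9]{KS19}, together with the companion dimension formula from \cite[Lemma~3.6]{RR14}. With $J_G$ Cohen--Macaulay these immediately give $\depth(R_H/J_H)\geq |V(H)|+2\geq\dim(R_H/J_H)$, hence $J_H$ is Cohen--Macaulay. The depth formula from \cite{KS19} is the genuine extra input your approach is missing; without it (or an equivalent), the converse of (4) does not close.
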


\begin{proof}
(1) and (2) are proved in \cite[Lemma 3.5]{RR14} and \cite[Corollary 3.7]{RR14} respectively.
\begin{enumerate}
\item[{\rm (3)}] We first assume that $H_1,H_2$ are accessible. Let $T_1 \sqcup T_2 \sqcup \{v\} \in \mc C(G)$, with $T_i \in \mc C(H_i)$ for $i=1,2$ and, without loss of generality, assume that $T_1 \neq \emptyset$. By assumption, there exists $w \in T_1$ such that $T_1 \setminus \{w\} \in \mc C(H_1)$, thus $(T_1 \setminus \{w\}) \sqcup T_2 \sqcup \{v\} \in \mc C(G)$.\\
      Conversely, suppose that $G$ is accessible. Let $T_1\in \mc C(H_1)\setminus \{\emptyset\}$, then $T=T_1 \sqcup \{v\} \in \mc C(G)$. By assumption, there exists $w \in T$ such that $T \setminus \{w\} \in \mc C(G)$. Since $|T| \geq 2$ and by (1), it follows that $w \neq v$, hence $w \in T_1$. We conclude that $T_1 \setminus \{w\} \in \mc C(H_1)$.
\item[{\rm (4)}] If $J_{H_1}$ and $J_{H_2}$ are Cohen-Macaulay, then $J_G$ is Cohen-Macaulay by \cite[Theorem 3.8]{RR14}. Conversely, assume $J_G$ Cohen-Macaulay. Set $|V(H)|=n$ and $R, R_H, R_{H_i}$ be the polynomial rings corresponding respectively to $G, H, H_i$, for $i=1,2$. Then $\dim(R/J_G)=\depth(R/J_G)=|V(G)|+1=n+2$.\\
      By \cite[Lemma 3.6]{RR14}, $\dim(R/J_G)=\max \{\dim(R_{H_1}/J_{H_1})+\dim(R_{H_2}/J_{H_2}), n+2\}$. Thus,
      \[
      \dim(R_H/J_H)=\dim(R_{H_1}/J_{H_1})+\dim(R_{H_2}/J_{H_2}) \leq n+2.
      \]
      Moreover, by \cite[Theorem 3.9]{KS19}, we have $\depth(R/J_G)=\min \{\depth(R_{H}/J_{H}), n+2\}$. Thus,
      \[
      \depth(R_H/J_H) \geq n+2.
      \]
      We conclude that $\depth(R_H/J_H) = \dim(R_H/J_H) = n+2$, hence $J_H$ is Cohen-Macaulay. By Remark \ref{R.connected} (iv), this is equivalent to have $J_{H_1}$ and $J_{H_2}$ Cohen-Macaulay. \qedhere
\end{enumerate}
\end{proof}

We are interested in the cone operation because an accessible graph containing exactly a cut vertex is a cone.

\begin{lemma}\label{L.oneCutVertex}
Let $G$ be a connected graph with exactly one cut vertex $v$ and let $H_1$ and $H_2$ be the connected components of $G \setminus \{v\}$. If $G$ is accessible, then $G=\mathrm{cone}(v,H_1 \sqcup H_2)$ and $H_1$, $H_2$ are accessible. Moreover, if $J_G$ is Cohen-Macaulay, then also $J_{H_1}$ and $J_{H_2}$ are Cohen-Macaulay.
\end{lemma}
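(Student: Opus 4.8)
The plan is to isolate the combinatorial heart of the statement---that accessibility forces $G$ to be the cone $\cone(v, H_1 \sqcup H_2)$---and then let the already established results do the rest. First, since $G$ is accessible, $J_G$ is unmixed, so $c_G(S) = |S| + 1$ for every $S \in \mc C(G)$ (recall $G$ is connected); applied to the cut set $S = \{v\}$ this gives $c_G(\{v\}) = 2$, so $G \setminus \{v\}$ indeed has exactly two connected components $H_1$ and $H_2$, both connected by definition. For the ``Moreover'' part, recall that $J_G$ Cohen-Macaulay implies $G$ accessible by Theorem~\ref{T.CMimpliesRCSP}, so it will suffice to prove the first assertion and then invoke Theorem~\ref{T.CMcones}~(4).

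The key step is to show $G = \cone(v, H_1 \sqcup H_2)$, equivalently that $v$ is adjacent to every other vertex of $G$. I would argue via the graph $G_v$ obtained by completing the neighbourhood of $v$. By Lemma~\ref{L.completingNeighborsVertex}, $G_v$ is again accessible and $\mc C(G_v) = \{S \in \mc C(G) : v \notin S\}$. Since $G$ is accessible, Lemma~\ref{L.cutSetsContainCutVertex} shows that every non-empty cut set of $G$ contains a cut vertex; as $v$ is the unique cut vertex of $G$, every non-empty cut set of $G$ contains $v$, and therefore $\mc C(G_v) = \{\emptyset\}$. Now $G_v$ is connected (it has the same vertex set as $G$ and contains all of its edges), so by Remark~\ref{R.NoCutSetsComplete} the graph $G_v$ is complete. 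On the other hand, the edges of $G_v$ not already present in $G$ all lie inside $N_G(v)$, and since $v \notin N_G(v)$, no edge incident to $v$ is created in passing from $G$ to $G_v$; as $G_v$ is complete, $v$ must already be adjacent in $G$ to every other vertex. Hence $G = \cone(v, G \setminus \{v\}) = \cone(v, H_1 \sqcup H_2)$.

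With the cone structure in hand, $H_1$ and $H_2$ are accessible by Theorem~\ref{T.CMcones}~(3). Finally, if $J_G$ is Cohen-Macaulay, then $G$ is accessible by Theorem~\ref{T.CMimpliesRCSP}, so $G = \cone(v, H_1 \sqcup H_2)$ by what we just proved, and Theorem~\ref{T.CMcones}~(4) gives that $J_{H_1}$ and $J_{H_2}$ are Cohen-Macaulay. I do not expect a serious obstacle here once the right tool is chosen: the only delicate point is that completing the neighbourhood of $v$ never produces an edge at $v$, which is exactly what turns ``$G_v$ is complete'' into ``$v$ is adjacent to everything''. The substantive work has effectively been done in advance, in Lemmas~\ref{L.cutSetsContainCutVertex} and~\ref{L.completingNeighborsVertex} and in Theorem~\ref{T.CMcones}.
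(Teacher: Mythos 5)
Your proposal is correct and follows essentially the same route as the paper: use Lemma \ref{L.cutSetsContainCutVertex} to see that every non-empty cut set contains the unique cut vertex $v$, deduce via Lemma \ref{L.completingNeighborsVertex}~(1) and Remark \ref{R.NoCutSetsComplete} that $G_v$ is complete and hence $G=\cone(v,H_1\sqcup H_2)$, and then apply Theorem \ref{T.CMcones} (together with Theorem \ref{T.CMimpliesRCSP} for the Cohen-Macaulay part). Your extra remarks—that unmixedness forces exactly two components of $G\setminus\{v\}$ and that completing the neighbourhood of $v$ creates no edges at $v$—just make explicit what the paper leaves implicit.
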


\begin{proof}
By Lemma \ref{L.cutSetsContainCutVertex} all non-empty cut sets of $G$ contain $v$. Then, Proposition \ref{L.completingNeighborsVertex} (1) implies that $\mc C(G_v)=\{\emptyset\}$ and this means that $G_v$ is a complete graph by Remark \ref{R.NoCutSetsComplete}. Hence, $G=\mathrm{cone}(v,H_1 \sqcup H_2)$. It is now enough to apply Theorem \ref{T.CMcones}.
\end{proof}

We now explore some structural properties of accessible graphs.

\begin{proposition}\label{P.subgraphCutVerticesConnected}
Let $G$ be a connected graph with $k$ cut vertices, $v_1,\dots,v_k$. If $G$ is accessible, then the induced subgraph $G[{\{v_1,\dots,v_k\}}]$ is connected.
\end{proposition}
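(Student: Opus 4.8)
The plan is to argue by induction on the number $k$ of cut vertices of $G$, passing at each step from $G$ to the graph $G_v$ obtained by completing the neighbourhood of a suitably chosen cut vertex $v$. The key point is that $G_v$ is again connected (it contains all edges of $G$) and accessible by Lemma~\ref{L.completingNeighborsVertex}(3), and that by Lemma~\ref{L.completingNeighborsVertex}(1) its set of cut vertices is exactly $C\setminus\{v\}$, where $C=\{v_1,\dots,v_k\}$; hence $G_v$ has one fewer cut vertex and the induction hypothesis applies to it. The cases $k\le 1$ are trivial. For $k=2$ I would note that $G_v$ then has a single cut vertex, so by Lemma~\ref{L.oneCutVertex} it is a cone over that cut vertex, which forces the two cut vertices of $G$ to be adjacent already in $G$ --- the edges added to form $G_v$ never involve $v$. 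For $k\ge 3$ I would argue by contradiction, assuming $G[C]$ disconnected.

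The heart of the argument is a \emph{lifting} step: from the connectedness of $G_v[C\setminus\{v\}]$ (supplied by the induction hypothesis) I want to deduce that $C\setminus\{v\}$ lies in a single connected component of $G[C]$. This rests on the observation that any edge of $G_v$ which is not an edge of $G$ joins two neighbours $a,b$ of $v$, hence can be replaced by the length-two path $a-v-b$ inside $G$, all of whose vertices lie in $C$ when $a,b\in C$. So, given a path inside $G_v[C\setminus\{v\}]$ between two vertices of $C\setminus\{v\}$, replacing each ``new'' edge by such a detour through $v$ turns it into a walk inside $G[C]$; this shows $C\setminus\{v\}$ sits in one component of $G[C]$. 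Since $G[C]$ is assumed disconnected and $C\setminus\{v\}\neq\emptyset$, the only possibility left is that $v$ is an isolated vertex of $G[C]$, i.e.\ $v$ is not adjacent in $G$ to any other cut vertex.

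To close the argument I would exploit that $v$ was arbitrary: repeating the lifting step with a different cut vertex $v'\neq v$ shows that $C\setminus\{v'\}$ also lies in a single connected component of $G[C]$. Since $k\ge 3$, one can choose $w\in C$ distinct from both $v$ and $v'$; then $v$ and $w$ both belong to $C\setminus\{v'\}$ and hence to the same component of $G[C]$, contradicting that $v$ is isolated in $G[C]$. This gives the desired contradiction, so $G[C]$ is connected.

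The main obstacle --- and the reason the case $k=2$ must be handled separately with Lemma~\ref{L.oneCutVertex} --- is precisely that knowing $G_v[C\setminus\{v\}]$ is connected does not by itself reconnect the vertex $v$ to the rest of $C$ inside $G[C]$: one genuinely needs the extra fact that some neighbour of $v$ is a cut vertex. For $k\ge 3$ this is extracted by playing two choices of $v$ against each other, while for $k=2$ it comes from the cone structure of $G_v$. The remaining points are routine bookkeeping: checking that the induction is well-founded (the hypothesis is applied to $G_v$, which has $k-1\ge 1$ cut vertices) and that $G_v$ inherits connectedness and accessibility from $G$, both of which are immediate from the cited lemmas.
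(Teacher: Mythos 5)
Your proof is correct, and while it shares the paper's inductive skeleton, the decisive step is different. Like the paper, you induct on $k$ through the completed graph $G_v$, use Lemma \ref{L.completingNeighborsVertex} to see that $G_v$ stays accessible and that its cut vertices are exactly $C\setminus\{v\}$ (where $C=\{v_1,\dots,v_k\}$), and lift paths of $G_v[C\setminus\{v\}]$ to walks of $G[C]$ by replacing each added edge $\{a,b\}$ with the detour through $v$; this lifting is exactly what the paper leaves implicit behind the phrase ``it is enough to show that $v_k$ is adjacent to some $v_i$ in $H_0$''. Where you diverge is in how the removed vertex gets reattached. The paper treats all $k>1$ uniformly: it iterates the completion, $H_0=G_{v_k}$, $H_i=(H_{i-1})_{v_i}$, until $H_{k-2}$ has the single cut vertex $v_{k-1}$, invokes Lemma \ref{L.oneCutVertex} there to obtain the edge $\{v_{k-1},v_k\}$, and then traces back through the $H_i$'s at which step that edge was created, concluding that $v_k$ is adjacent to some cut vertex already in $H_0$ and hence in $G$ (completion at $v_k$ adds no edges incident to $v_k$). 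You bypass this edge-tracing for $k\ge 3$ by applying the induction hypothesis to two different completions $G_v$ and $G_{v'}$: each shows that all cut vertices but one lie in a single component of $G[C]$, and for $k\ge 3$ the two conclusions together rule out a disconnected $G[C]$, since the vertex forced to be isolated by the first choice is forced into a component of size at least two by the second. This is arguably cleaner, avoiding the iterated sequence $H_0,\dots,H_{k-2}$ and the bookkeeping of where edges arise, at the price of handling $k=2$ separately (where your use of Lemma \ref{L.oneCutVertex} is just the paper's argument specialized to that case); the paper's route, in exchange, produces the slightly stronger local fact that $v_k$ is adjacent in $G$ to another cut vertex, not merely that $G[C]$ is connected.
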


\begin{proof}
We proceed by induction on $k \geq 1$. If $k=1$, the claim is trivial. Let $k>1$, set $H_0=G_{v_k}$, and $H_i=(H_{i-1})_{v_i}$ for $i=1,\dots,k-1$. We notice that for every $i=0,\dots,k-1$, $H_i$ has exactly $k-1-i$ cut vertices, $v_{i+1},\dots,v_{k-1}$, and is accessible by Lemma \ref{L.completingNeighborsVertex} (3). By induction, the induced subgraph $H_0[\{v_1,\dots,v_{k-1}\}]$ is connected and it is enough to show that $v_k$ is adjacent to some $v_i$ in $H_0$. Since $v_{k-1}$ is the only cut vertex of $H_{k-2}$, by Lemma \ref{L.oneCutVertex}, $v_{k-1}$ is adjacent to $v_k$ in $H_{k-2}$. Hence, either $v_k$ is adjacent to $v_{k-1}$ in $H_0$ or it is adjacent to some other cut vertex $v_i$, with $1 \leq i <k-1$, in $H_0$.
\end{proof}

\begin{proposition} \label{P.EveryVertexIsAdjacentToACutVertex}
Let $G$ be a non-complete connected graph and suppose that every non-empty cut set of $G$ contains a cut vertex. Then, every vertex of $G$ that is not a cut vertex is adjacent to a cut vertex.
\end{proposition}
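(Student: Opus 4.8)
The plan is to argue by contradiction: suppose $w$ is a vertex of $G$ that is not a cut vertex and is not adjacent to any cut vertex. The key idea is to produce a cut set of $G$ that avoids all cut vertices, contradicting the hypothesis. First I would observe that since $G$ is non-complete and connected, $G$ has at least one cut vertex (this follows, e.g., from Remark \ref{R.NoCutSetsComplete} together with the hypothesis, since a graph with no cut vertices and $\mathcal{C}(G)=\{\emptyset\}$ would be complete). So the set $C$ of cut vertices is non-empty, and by assumption $N_G(w) \cap C = \emptyset$.

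The main step is to build a cut set from $w$. Since $G$ is non-complete, there is a vertex $u$ with $\{w,u\} \notin E(G)$ (if $w$ were adjacent to everything, then because $w$ is not a cut vertex, $G\setminus\{w\}$ is connected; but I should instead directly find a non-neighbour of $w$ — if $N_G[w] = V(G)$ then every vertex is within distance one of $w$; I would then pick any edge $\{a,b\}$ with $a,b \neq w$ — which exists since $G$ is not complete and not a star, or handle the star case separately — hmm, this needs care). Let me restructure: consider $T = N_G(w)$, the neighbourhood of $w$. Since $G$ is non-complete, $N_G[w] \neq V(G)$ is \emph{not} automatic, so I would split into cases. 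If $N_G[w] \neq V(G)$: then I claim $N_G(w)$ contains a cut set. More carefully, I would take a minimal subset $S \subseteq N_G(w)$ that separates $w$ from $V(G)\setminus N_G[w]$ (such $S$ exists since $N_G(w)$ itself separates $w$ from the non-empty set $V(G)\setminus N_G[w]$); minimality of such a separator forces $S$ to be a cut set, because removing any $s \in S$ would leave $w$ connected to $V(G)\setminus N_G[w]$ through a path, merging two components of $G \setminus S$ (one containing $w$, one containing a vertex of $V(G)\setminus N_G[w]$). Since $S \subseteq N_G(w)$ and $N_G(w) \cap C = \emptyset$, the non-empty cut set $S$ contains no cut vertex — contradiction. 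If instead $N_G[w] = V(G)$, then $w$ is adjacent to every other vertex, so $G = \mathrm{cone}(w, G\setminus\{w\})$; since $G$ is non-complete, $G \setminus \{w\}$ is non-complete, and since $w$ is not a cut vertex, $G\setminus\{w\}$ is connected. But then any cut vertex of $G$ must be a cut vertex of $G \setminus \{w\}$ or arise from the cone structure; the cut sets of $G$ are $\{\emptyset\}$ together with sets of the form $\{w\} \cup S'$, which means no non-empty cut set avoids $w$, yet also no single vertex $v \neq w$ can be a cut vertex of the cone — so $w$ is the unique cut vertex, contradicting $N_G(w) \cap C = \emptyset$ being required to be checked against... actually if $w$ is the only cut vertex then $w$ not being a cut vertex is false. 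So this subcase is vacuous.

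The subtle point I expect to be the main obstacle is verifying cleanly that the minimal separator $S \subseteq N_G(w)$ between $w$ and $V(G) \setminus N_G[w]$ is genuinely a cut set of $G$ in the paper's sense — i.e., that for \emph{every} $s \in S$, the component count strictly drops. The argument is: by minimality of $S$, in $G \setminus (S \setminus \{s\})$ the vertex $w$ lies in the same component as some vertex $z \in V(G)\setminus N_G[w]$; adding $s$ back and removing it again, $s$ is adjacent to $w$ (as $s \in N_G(w)$) and lies on a $w$–$z$ path, so $s$ reconnects the component of $w$ with the component of $z$ in $G \setminus S$, which are distinct since $S$ separates them. Hence $c_G(S \setminus \{s\}) < c_G(S)$ for all $s \in S$, so $S \in \mathcal{C}(G)$. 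Once this is in hand, the contradiction with $N_G(w) \cap C = \emptyset$ is immediate, completing the proof.
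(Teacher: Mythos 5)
Your proposal is correct, but it takes a genuinely different route from the paper's proof. You argue locally at the offending vertex $w$: when $V(G)\setminus N_G[w]\neq\emptyset$ you pick an inclusion-minimal $S\subseteq N_G(w)$ separating $w$ from $V(G)\setminus N_G[w]$ and verify directly that $S\in\mathcal{C}(G)$ (for each $s\in S$, minimality yields a $w$--$z$ path in $G\setminus(S\setminus\{s\})$ through $s$ with $z\notin N_G[w]$, so $s$ reconnects the distinct components of $w$ and $z$ in $G\setminus S$); since $w$ has no cut-vertex neighbour, this non-empty cut set contains no cut vertex, contradicting the hypothesis. Your separate case $N_G[w]=V(G)$ is also fine in substance: no vertex $v\neq w$ can be a cut vertex because $w$ dominates $G\setminus\{v\}$, and $w$ is not one by assumption, so $G$ has no cut vertices, contradicting the existence of a cut vertex forced by Remark \ref{R.NoCutSetsComplete} and the hypothesis; your phrase ``so $w$ is the unique cut vertex'' is a slip, but you immediately correct it and the conclusion stands. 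The paper instead builds the contradiction globally from the cut vertices $v_1,\dots,v_r$: it sets $N=N_G[v_1]\cup\dots\cup N_G[v_r]$ and $N'=\{v\in N : N_G(v)\nsubseteq N\}$, observes that $N'$ is a cut set containing no cut vertex, and uses $w\notin N$ only to see that $N'\neq\emptyset$; this single construction avoids both your case distinction and the minimal-separator verification. Your approach buys a more elementary, self-contained argument (resting on the standard fact, which you prove correctly, that an inclusion-minimal separator is a cut set in the paper's sense), at the cost of the extra case and a longer check; the paper's construction is shorter and uniform, and the same $N$, $N'$ idea is reused later in the paper (e.g., in the proof of Proposition \ref{P.CompleteSubgraphOnCutVertices}).
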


\begin{proof}
Since $G$ is not complete, by Remark \ref{R.NoCutSetsComplete} it has some non-empty cut sets and, hence, it has at least one cut vertex by assumption.
Assume that there exists a vertex $w$ of $G$ which is not a cut vertex and is not adjacent to any cut vertex. Let $v_1, \dots, v_r$ be the cut vertices of $G$ for some $r \geq 1$. Define $N=N_G[v_1] \cup N_G[v_2] \cup \dots \cup N_G[v_r]$ and $N'=\{v \in N : N_G(v) \nsubseteq N\} \subseteq (N \setminus \{v_1, \dots, v_r\})$. Therefore, every $v \in N'$ is adjacent to some $x \notin N$ and to some $y \in \{v_1, \dots, v_r\}$, and by construction $x$ and $y$ belong to two different connected components of $G \setminus N'$. Thus, $N'$ is a cut set of $G$. Moreover, $N'$ is not empty since otherwise $N=V(G)$ against $w \in V(G) \setminus N$. By construction, $N'$ does not contain any cut vertex, and this contradicts the assumption.
\end{proof}

Let $G$ be a connected and accessible graph. If $G$ has no cut vertices, it is a complete graph by Remark \ref{R.NoCutVerticesComplete}. If $G$ has one cut vertex, it is a cone over an accessible graph with fewer vertices by Lemma \ref{L.oneCutVertex}. The next statement summarizes some properties when $G$ has at least two cut vertices.

\begin{theorem}\label{T.propertiesCM}
Let $G$ be a connected accessible graph with at least two cut vertices. Then
\begin{itemize}
\item[$(1)$] every non-empty cut set of $G$ contains a cut vertex;
\item[$(2)$] the graph induced on the cut vertices of $G$ is connected;
\item[$(3)$] every vertex of $G$ is adjacent to a cut vertex.
\end{itemize}
In particular, these properties hold if $J_G$ is Cohen-Macaulay.
\end{theorem}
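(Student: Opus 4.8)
The plan is to assemble the three statements from results already established in this subsection, invoking the hypothesis of at least two cut vertices at exactly one point.

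First I would dispatch (1) and (2) with no further work: statement (1) is precisely Lemma \ref{L.cutSetsContainCutVertex}, which holds for every accessible graph, and statement (2) is precisely Proposition \ref{P.subgraphCutVerticesConnected}, which holds for every connected accessible graph.

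For (3) I would argue as follows. Since $G$ has a cut vertex, $G$ is not complete (deleting any vertex of $K_n$ leaves the connected graph $K_{n-1}$, so complete graphs have no cut vertices). By part (1), every non-empty cut set of $G$ contains a cut vertex, so the hypotheses of Proposition \ref{P.EveryVertexIsAdjacentToACutVertex} are satisfied, and that proposition yields that every vertex of $G$ which is not a cut vertex is adjacent to a cut vertex. It then remains to check that each cut vertex is itself adjacent to a cut vertex: by part (2) the subgraph induced on the set of cut vertices of $G$ is connected, and since by hypothesis there are at least two cut vertices, this induced subgraph is a connected graph on at least two vertices and therefore has no isolated vertex; hence every cut vertex has a cut vertex as a neighbour. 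Combining the two cases gives (3).

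For the closing assertion, if $G$ is connected, has at least two cut vertices, and $J_G$ is Cohen-Macaulay, then $G$ is accessible by Theorem \ref{T.CMimpliesRCSP}, so (1)--(3) apply. As for obstacles: the substance of the theorem has already been distilled into Lemma \ref{L.cutSetsContainCutVertex} and Propositions \ref{P.subgraphCutVerticesConnected} and \ref{P.EveryVertexIsAdjacentToACutVertex}, so there is essentially nothing hard left. The one step that genuinely uses the assumption of at least two cut vertices -- and hence the only place deserving a moment's care -- is the verification that a cut vertex is adjacent to a cut vertex, since with a single cut vertex part (2) is vacuous and one would instead have to appeal to the cone structure of Lemma \ref{L.oneCutVertex}.
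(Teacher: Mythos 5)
Your proposal is correct and follows the paper's own proof essentially verbatim: (1) is Lemma \ref{L.cutSetsContainCutVertex}, (2) is Proposition \ref{P.subgraphCutVerticesConnected}, (3) combines (2) with Proposition \ref{P.EveryVertexIsAdjacentToACutVertex}, and the final claim is Theorem \ref{T.CMimpliesRCSP}. Your spelled-out use of the hypothesis of at least two cut vertices (to ensure each cut vertex has a cut vertex neighbour via the connectedness in (2)) is exactly the role that hypothesis plays in the paper's argument.
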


\begin{proof}
$(1)$ is Lemma \ref{L.cutSetsContainCutVertex}, $(2)$ is Proposition \ref{P.subgraphCutVerticesConnected}, whereas $(3)$ follows by $(2)$ and Proposition \ref{P.EveryVertexIsAdjacentToACutVertex}. The last part of the claim follows by Theorem \ref{T.CMimpliesRCSP}.
\end{proof}

\begin{remark}
Notice that properties (1), (2) and (3) in Theorem \ref{T.propertiesCM} are only necessary but not sufficient for $G$ to be accessible. In fact, the graph $G$ in Example \ref{E.NonRCSPgraphWithCutVertexInEveryCutSet} satisfies the above three properties but $G$ is not accessible.
\end{remark}

\subsection{Cut sets of accessible graphs}

We now study the structure of the cut sets of accessible graphs. First we provide new combinatorial interpretations of accessibility. We start with a preliminary result.

\begin{lemma}\label{L.PropertiesVerticesOfCutSets}
Let $G$ be a graph with $J_G$ unmixed, $S$ be a cut set of $G$ and $s \in S$. Then, $S \setminus \{s\}$ is a cut set of $G$ if and only if $s$ reconnects exactly two connected components of $G \setminus S$.
\end{lemma}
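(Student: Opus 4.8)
The plan is to unwind the definition of cut set together with the unmixedness hypothesis, which via \cite[Lemma 2.5]{RR14} (recalled in the Preliminaries) says $c_G(T) = |T| + c$ for every $T \in \mc C(G)$, where $c$ is the number of connected components of $G$. I may assume $G$ connected, so $c=1$ and $c_G(T) = |T|+1$ for every cut set $T$; the general case follows componentwise by Remark \ref{R.connected}. Fix a cut set $S$ and an element $s \in S$. The key quantity is $c_G(S \setminus \{s\})$ compared with $c_G(S) = |S|+1$. Since $S$ is a cut set, we already know $c_G(S \setminus \{s\}) < c_G(S) = |S|+1$, i.e. $c_G(S \setminus \{s\}) \le |S|$.

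First I would observe the elementary combinatorial fact: adding a single vertex $s$ back to $G \setminus (S\setminus\{s\})$ can only merge connected components, never create new ones, and the number of components of $G \setminus S$ that get merged into a single one equals $c_G(S\setminus\{s\})$ being decreased by exactly (number of merged components) $-1$. More precisely, if $s$ reconnects exactly the components $G_1, \dots, G_r$ of $G \setminus S$ (and no others), then $c_G(S\setminus\{s\}) = c_G(S) - (r-1) = |S| + 1 - (r-1) = |S| + 2 - r$. Here I should be slightly careful: a component of $G \setminus S$ that is not adjacent to $s$ survives untouched, and $s$ together with its incident edges forms part of the component(s) it attaches to, so no brand-new component appears — this is where I would spell out that $s$ always has a neighbour in $G \setminus S$, or else $S\setminus\{s\}$ trivially fails to be a cut set (removing an isolated-type vertex). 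Actually the cleanest statement: $c_G(S\setminus\{s\}) = c_G(S) - r + 1$ where $r \ge 1$ is the number of components of $G\setminus S$ lying in the same component of $G \setminus (S\setminus\{s\})$ as $s$ (counting $r=1$ if $s$ reconnects nothing, i.e. $s$'s component in $G\setminus(S\setminus\{s\})$ contains exactly one old component, possibly after $s$ attaches to it).

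Now the two directions fall out. If $S \setminus \{s\}$ is a cut set, then by unmixedness $c_G(S\setminus\{s\}) = |S\setminus\{s\}|+1 = |S|$, so from $|S| = |S|+2-r$ we get $r = 2$, i.e. $s$ reconnects exactly two components of $G \setminus S$. Conversely, if $s$ reconnects exactly two components, then $c_G(S\setminus\{s\}) = |S|+2-2 = |S| = |S\setminus\{s\}|+1$; to conclude $S\setminus\{s\}$ is a cut set I must check the defining inequality for every $t \in S \setminus\{s\}$, namely $c_G((S\setminus\{s\})\setminus\{t\}) < c_G(S\setminus\{s\})$. This is the one point requiring a small argument rather than pure bookkeeping: I would derive it from the fact that $S$ itself is a cut set, so $c_G(S\setminus\{t\}) < c_G(S)$, and then note that removing $t$ from $S\setminus\{s\}$ versus from $S$ differs only by whether $s$ is present; using the monotone "merging" behaviour of adding/removing a vertex and the exact count above, one checks the strict inequality is preserved. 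I expect this verification — that dropping $s$ does not destroy the cut-set property with respect to the remaining elements — to be the main (though still routine) obstacle; everything else is a direct substitution into the unmixedness equality $c_G(T)=|T|+1$.
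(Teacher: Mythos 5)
Your forward direction is fine and is exactly the paper's argument: unmixedness gives $c_G(S)=|S|+c$ and $c_G(S\setminus\{s\})=|S|-1+c$, so the drop is one and $s$ reconnects exactly two components. The gap is in the converse, precisely at the step you flag as ``routine''. Knowing $c_G(S\setminus\{s\})=|S\setminus\{s\}|+c$ does not make $S\setminus\{s\}$ a cut set; you must show every $t\in S\setminus\{s\}$ still reconnects at least two components of $G\setminus(S\setminus\{s\})$. Your suggested derivation --- that since $c_G(S\setminus\{t\})<c_G(S)$, the strict inequality is ``preserved by monotone merging'' when $s$ is put back --- fails exactly in the problematic case: if all neighbours of $t$ outside $S$ lie in $V(G_1)\cup V(G_2)$, the two components that $s$ merges, then in $G\setminus(S\setminus\{s\})$ those components form a single component and adding $t$ back merges nothing, so $c_G((S\setminus\{s\})\setminus\{t\})=c_G(S\setminus\{s\})$ and $S\setminus\{s\}$ is not a cut set. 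Monotonicity goes the wrong way here (merging components can only destroy the property ``$t$ joins two distinct components''), so the fact that $t$ reconnects two components of $G\setminus S$ gives nothing. Ruling out such a $t$ is the real content of the converse and it genuinely needs a second application of unmixedness, not just the component count of $G\setminus(S\setminus\{s\})$.

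The paper closes this gap with a global argument: with $G_1,\dots,G_{r+c}$ the components of $G\setminus S$ and $s$ reconnecting only $G_1,G_2$, set $Z=\{z\in S : z$ is adjacent to no vertex of $G_3\cup\dots\cup G_{r+c}\}$, which contains $s$ and all potential bad vertices $t$ as above. One checks $T=S\setminus Z$ is a cut set whose components are $G[V(G_1\cup G_2)\cup Z],G_3,\dots,G_{r+c}$, so $c_G(T)=r+c-1$; unmixedness applied to $T$ gives $|T|=r-1$, while unmixedness applied to $S$ gives $|S|=r$, forcing $|Z|=1$, i.e.\ $Z=\{s\}$. Hence no bad $t$ exists and $S\setminus\{s\}$ is a cut set. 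Some argument of this kind (using unmixedness on an auxiliary cut set) is indispensable; your sketch as written would ``prove'' the converse using unmixedness only through the single equality $c_G(S)=|S|+c$, which is not enough.
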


\begin{proof}
Let $c$ be the number of connected components of $G$ and assume that $S\setminus \{s\} \in \mathcal{C}(G)$. Since $J_G$ is unmixed, $c_G(S)=|S|+c$ and $c_G(S \setminus \{s\})=|S|-1+c$; hence, $s$ reconnects exactly two connected components of $G \setminus S$.

Conversely, let $G_1, \dots, G_{r+c}$ be the connected components of $G \setminus S$ and assume that $s$ reconnects only $G_1$ and $G_2$.
Let us consider the set
\[
Z=\{z \in S : z \text{ is not adjacent to any vertex of $G_3 \cup \dots \cup G_{r+c}$}\},
\]
which contains $s$. Then, $T=S \setminus Z$ is a cut set of $G$ and the connected components of $G \setminus T$ are $G[V(G_1 \cup G_2) \cup Z]$, $G_3, \dots$, $G_{r+c}$. The unmixedness of $J_G$ implies that $|S\setminus Z|=r-1$; then, $Z = \{s\}$ and $S \setminus \{s\}$ is a cut set of $G$.
\end{proof}

\begin{corollary} \label{C.rcsp}
Let $G$ be a graph. The following conditions are equivalent:
\begin{itemize}
\item[{\rm (1)}] $G$ is accessible;
\item[{\rm (2)}] $J_G$ is unmixed and it is possible to order every  $S \in \mc C(G)$ in such a way that $S=\{s_1, \dots, s_r\}$ and $\{s_1, \dots, s_i\} \in \mc C(G)$ for every $i=1, \dots, r$;
\item[{\rm (3)}] It is possible to order every $S \in \mc C(G)$ in such a way that $S=\{s_1, \dots, s_r\}$ and $c_G(\{s_1,\dots,s_{i}\})=c_G(\{s_1, \dots, s_{i-1}\})+1$ for every $i=1, \dots, r$.
\end{itemize}
\end{corollary}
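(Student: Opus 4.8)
The plan is to prove the chain of implications $(1) \Rightarrow (2) \Rightarrow (3) \Rightarrow (1)$, drawing on Lemma \ref{L.PropertiesVerticesOfCutSets} and the unmixedness characterization recalled in the preliminaries. The crucial bridge between the statements is that, when $J_G$ is unmixed, the condition $S \setminus \{s\} \in \mc C(G)$ is equivalent to $c_G(S \setminus \{s\}) = c_G(S) - 1$; indeed by unmixedness any cut set $T$ satisfies $c_G(T) = |T| + c$, so $c_G(S \setminus \{s\}) = c_G(S) - 1$ forces $|S \setminus \{s\}| = c_G(S\setminus\{s\}) - c = |S| - 1$ to be ``the right size'', and Lemma \ref{L.PropertiesVerticesOfCutSets} makes this precise via the reconnection count.

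For $(1) \Rightarrow (2)$: assume $G$ is accessible, so $J_G$ is unmixed. Given $S \in \mc C(G)$ with $|S| = r$, I would build the ordering by downward recursion. Accessibility gives some $s_r \in S$ with $S \setminus \{s_r\} \in \mc C(G)$; applying accessibility again to $S \setminus \{s_r\}$ yields $s_{r-1}$, and so on, producing $s_r, s_{r-1}, \dots, s_1$ such that every prefix $\{s_1, \dots, s_i\}$ is a cut set. (The empty set is a cut set, covering the base of the recursion.) This is essentially a restatement of the accessibility of the set system $\mc C(G)$ as the existence of a maximal chain from $\emptyset$ to $S$ inside $\mc C(G)$.

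For $(2) \Rightarrow (3)$: given the ordering from (2), each $\{s_1, \dots, s_i\}$ and its predecessor $\{s_1, \dots, s_{i-1}\}$ are both cut sets of the unmixed ideal $J_G$, so by the unmixedness formula $c_G(\{s_1, \dots, s_i\}) = i + c$ and $c_G(\{s_1, \dots, s_{i-1}\}) = (i-1) + c$; subtracting gives exactly $c_G(\{s_1, \dots, s_i\}) = c_G(\{s_1, \dots, s_{i-1}\}) + 1$. For $(3) \Rightarrow (1)$: first note that $(3)$ applied to $S = \emptyset$ is vacuous, and applied to a cut vertex $\{s_1\}$ gives $c_G(\{s_1\}) = c_G(\emptyset) + 1 = c + 1$; more generally telescoping the relation in $(3)$ along any $S = \{s_1, \dots, s_r\}$ yields $c_G(S) = r + c = |S| + c$, which is precisely the unmixedness criterion, so $J_G$ is unmixed. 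Then for non-empty $S \in \mc C(G)$, take the ordering from $(3)$: I claim $S \setminus \{s_r\} = \{s_1, \dots, s_{r-1}\}$ is a cut set. Since $c_G(\{s_1,\dots,s_{r-1}\}) = c_G(S) - 1$, the vertex $s_r$ reconnects exactly two connected components of $G \setminus S$, so by Lemma \ref{L.PropertiesVerticesOfCutSets} (which requires $J_G$ unmixed, already established) $S \setminus \{s_r\} \in \mc C(G)$. Hence $\mc C(G)$ is an accessible set system and $G$ is accessible.

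The only delicate point — and the step I would write most carefully — is the interface with Lemma \ref{L.PropertiesVerticesOfCutSets} in $(3) \Rightarrow (1)$: one must be sure that the ordering hypothesis in $(3)$ is asserted for \emph{every} cut set $S$ (so that it applies to the particular $S$ at hand), and that $c_G(\{s_1,\dots,s_{r-1}\}) = c_G(S) - 1$ genuinely translates into ``$s_r$ reconnects exactly two components'' rather than merely ``at least two'' — this is where unmixedness is doing real work, since a priori removing one vertex could drop the component count by more than one, but the already-derived equality $c_G(S) = |S| + c$ together with $c_G(S \setminus \{s_r\}) \geq c_G(S) - 1$ for any single vertex pins it down. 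I do not expect any serious obstacle beyond bookkeeping; the substance is entirely contained in Lemma \ref{L.PropertiesVerticesOfCutSets} and the unmixedness formula, and the corollary is really just repackaging accessibility into chain-theoretic and component-counting language.
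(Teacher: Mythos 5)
Your proof is correct and follows essentially the same route as the paper's: the equivalence of (1) and (2) is read off from the definition of accessibility, telescoping the relation in (3) yields the unmixedness formula $c_G(S)=|S|+c$, and Lemma \ref{L.PropertiesVerticesOfCutSets} provides the bridge between the chain condition and the component-count condition; you merely arrange the content as a cycle of implications instead of two equivalences. One small caveat: the inequality $c_G(S\setminus\{s_r\})\geq c_G(S)-1$ in your closing remark is false in general (adding back a vertex that reconnects many components drops the count by more than one), but it is not needed, since the equality $c_G(S\setminus\{s_r\})=c_G(S)-1$ by itself forces $s_r$ to reconnect exactly two components of $G\setminus S$ by the elementary count, and unmixedness is only required to invoke the lemma.
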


\begin{proof}
The equivalence between (1) and (2) follows by the definition of accessible graph.
Let $S$ be a cut set of $G$ with cardinality $r$ and let $c$ be the number of the connected components of $G$.
We first notice that (3) implies the unmixedness of $J_G$: indeed, $c_G(S)=c_G(\{s_1, \dots, s_{r-1}\})+1=c_G(\{s_1, \dots, s_{r-2}\})+2= \dots =c_G(\emptyset)+r=c+|S|$. Now, the equivalence between (2) and (3) is a consequence of Lemma \ref{L.PropertiesVerticesOfCutSets}.
\end{proof}

Given a cut set $S$ of an accessible graph $G$, by Corollary \ref{C.rcsp} we know that there exists an order of the elements of $S=\{s_1, \dots, s_r\}$ such that $\{s_1, \dots, s_i\}$ is a cut set of $G$ for every $i=1, \dots, r$, but in general we do not have control on how this order can be chosen. The following results of this section allow to fill this gap.

\begin{lemma}\label{L.cutSetsConsistingOfCutVertices}
Let $G$ be a graph with $J_G$ unmixed and let $S$ be a cut set of $G$. If every element of $S$ is a cut vertex of $G$, then every subset of $S$ is a cut set of $G$.
\end{lemma}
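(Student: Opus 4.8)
The plan is to induct on $|S|$, using Lemma~\ref{L.PropertiesVerticesOfCutSets} to reduce the statement to counting how many components of $G\setminus S$ each single vertex of $S$ reconnects.

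First I would pass to the connected case by Remark~\ref{R.connected}, and dispose of $|S|\le 1$ at once ($\emptyset$ is a cut set, and a singleton subset of $S$ consists of a cut vertex by hypothesis). For $|S|=r\ge 2$, it is enough to prove that $S\setminus\{v\}$ is a cut set for \emph{every} $v\in S$: indeed each $S\setminus\{v\}$ is then a cut set all of whose elements are cut vertices of $G$, so by the inductive hypothesis every subset of $S\setminus\{v\}$ is a cut set, and since every proper subset of $S$ is contained in some $S\setminus\{v\}$ (and $S$ itself is a cut set by hypothesis) we obtain the claim. By Lemma~\ref{L.PropertiesVerticesOfCutSets}, $S\setminus\{v\}\in\mc C(G)$ exactly when $v$ reconnects precisely two connected components of $G\setminus S$, so fix $v\in S$ and count. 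That $v$ reconnects \emph{at least} two components is forced by $S\in\mc C(G)$: reconnecting $0$ or $1$ components would give $c_G(S\setminus\{v\})\ge c_G(S)$, contradicting $c_G(S\setminus\{v\})<c_G(S)$. Hence the whole problem is to show $v$ reconnects \emph{at most} two components.

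For the upper bound I would use both hypotheses on $v$. Since $J_G$ is unmixed and $v$ is a cut vertex, $c_G(\{v\})=2$, so $G\setminus\{v\}=A_1\sqcup A_2$ has exactly two components and $v$ lies in exactly two blocks $B_1,B_2$ of $G$, with $B_i\setminus\{v\}\subseteq A_i$. Writing $S\setminus\{v\}=S_1\sqcup S_2$ with $S_i\subseteq A_i$, the components of $G\setminus S$ are those of $G[A_1]\setminus S_1$ together with those of $G[A_2]\setminus S_2$; since $N_G(v)\subseteq B_1\cup B_2$, the components reconnected by $v$ are exactly those meeting $(B_i\setminus\{v\})\setminus S_i$ for $i=1,2$. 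So it suffices to show that $(B_i\setminus\{v\})\setminus S_i$ lies in a single component of $G[A_i]\setminus S_i$ (it may be empty), which then bounds the reconnected components by $1+1=2$.

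This last point is the main obstacle. If $S\cap B_i=\{v\}$ it is immediate, because a block minus one vertex is connected; the trouble is the general case, where $S$ may contain further vertices lying in the block $B_i$, so a priori $(B_i\setminus\{v\})\setminus S_i$ could split. To deal with this I would exploit that $J_G$ unmixed forces \emph{every} cut vertex of $G$ — hence every element of $S$ — to lie in exactly two blocks, so that the blocks of $G$ are glued tree-like, each cut vertex belonging to precisely two of them. Inside this rigid block--cut structure one can compute $c_G(T)$ for every $T\subseteq S$ by bookkeeping on the blocks met by $T$, and show that removing these cut vertices never splits a block into more than one surviving piece; this yields $c_G(T)=|T|+1$ for all $T\subseteq S$, which already makes every $T\subseteq S$ a cut set. (Alternatively, phrased through the reduction above: if some $v\in S$ reconnected three or more components of $G\setminus S$, one would extract from $S$ a cut set $T$ with $c_G(T)<|T|+1$, contradicting unmixedness.) Making this block analysis precise — tracking how the components of $G\setminus S$ distribute among the two sides $A_1,A_2$ of $v$ and within the individual blocks — is the technical heart of the argument; everything else is the bookkeeping above together with Lemma~\ref{L.PropertiesVerticesOfCutSets} and the induction.
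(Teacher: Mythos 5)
Your reduction is fine as far as it goes: passing to the connected case, inducting on $|S|$, and using Lemma \ref{L.PropertiesVerticesOfCutSets} to restate the goal as ``every $v\in S$ reconnects at most two components of $G\setminus S$'' matches the paper's starting point, and the facts you invoke about a cut vertex $v$ (unmixedness gives $c_G(\{v\})=2$, so $v$ lies in exactly two blocks $B_1,B_2$, one on each side $A_1,A_2$ of $G\setminus\{v\}$) are correct.

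However, the argument is not complete. All of the difficulty is concentrated in the step you yourself call the main obstacle: showing that $(B_i\setminus\{v\})\setminus S_i$ meets only one component of $G[A_i]\setminus S_i$, equivalently that deleting the cut vertices of $S$ lying in a block never leaves that block in more than one surviving piece, equivalently that $c_G(T)=|T|+1$ for all $T\subseteq S$. You do not prove this; you assert it follows from ``bookkeeping on the blocks'' and acknowledge that making it precise is the technical heart. But this is not a routine consequence of the tree-like block structure: when several elements of $S$ lie in the same block $B_i$, the set $B_i\setminus(S\cup\{v\})$ can a priori be disconnected inside $B_i$, and whether its pieces get rejoined in $G\setminus S$ through other blocks depends on which cut vertices of those blocks also belong to $S$; ruling this out is exactly where unmixedness must be used in an essential, non-local way (the graph in Figure \ref{F.Lemma_cutSetsConsistingOfCutVertices} shows the conclusion fails without it). This is precisely what the paper's proof spends its effort on: assuming some $S\setminus\{s\}$ is not a cut set, choosing a witness $t$ with $c_G(S\setminus\{s\})=c_G(S\setminus\{s,t\})$, forming the set $Z$ and the strictly smaller cut set $T=S\setminus Z$ (still consisting of cut vertices), using that $t$ is a cut vertex to produce a vertex $u\in T$ adjacent to $t$ but to none of the components $G_1,\dots,G_p$ reconnected by $s$, applying the induction hypothesis to $T$, and contradicting Lemma \ref{L.PropertiesVerticesOfCutSets} because $u$ reconnects at least three components of $G\setminus T$. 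Your parenthetical alternative (``extract from $S$ a cut set $T$ with $c_G(T)<|T|+1$'') is the same kind of argument, but the extraction is exactly what has to be constructed and is absent. As it stands, the proposal reduces the lemma to an unproved claim of essentially the same depth, so there is a genuine gap.
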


\begin{proof}
We may assume that $G$ is connected by Remark \ref{R.connected} and we proceed by induction on $|S|=r$. If $r \leq 2$ the claim holds, hence we fix $r \geq 3$. It is enough to show that all subsets of $S$ with cardinality $r-1$ are cut sets. Thus, assume by contradiction that there exists $s \in S$ such that $S\setminus \{s\}$ is not a cut set of $G$ and let $t \in S \setminus \{s\}$ such that $c_G(S\setminus\{s\})=c_G(S \setminus \{s,t\})$.
Let $G_1, \dots, G_{r+1}$ be the connected components of $G \setminus S$ and assume that $s$ reconnects exactly the components $G_1, \dots, G_p$ for some $p \geq 2$. Since $c_G(S\setminus\{s\})=c_G(S \setminus \{s,t\})$ and $t$ reconnects at least two connected components of $G \setminus S$, the vertex $t$ is not adjacent to vertices of $G_{p+1} \cup \dots \cup G_{r+1}$ in $G \setminus S$.
Consider the set
\[
Z=\{z \in S : z \text{ is not adjacent to vertices of } G_{p+1} \cup \dots \cup G_{r+1}\}.
\]
Clearly $s,t \in Z$ and $T=S \setminus Z \in \mc C(G)$ by construction.

Notice that $G_1, \dots, G_p$ are in the same connected component of $G \setminus \{t\}$ because $s$ is adjacent to vertices of each of $G_1, \dots, G_p$. Since $t$ is a cut vertex of $G$, there exists a vertex $u \in S$, which is adjacent to $t$ and which is not adjacent to any vertex of $G_1 \cup \dots \cup G_p$. Moreover, $u$ reconnects at least two connected components of $G \setminus S$ among $G_{p+1},\dots,G_{r+1}$, say $G_{p+1}$ and $G_{p+2}$. In particular, $u \in T$ and, thus, $T\setminus \{u\} \in \mc C(G)$ by induction. On the other hand, $u$ reconnects at least $G_{p+1}$, $G_{p+2}$ and the connected component containing $t$ in $G \setminus T$ (which is $G[V(G_1 \cup \cdots \cup G_p) \cup Z]$). Thus, Lemma \ref{L.PropertiesVerticesOfCutSets} yields a contradiction.
\end{proof}

\begin{example}
Lemma \ref{L.cutSetsConsistingOfCutVertices} does not hold if we do not require $J_G$ unmixed. In fact, let $G$ be the graph in Figure \ref{F.Lemma_cutSetsConsistingOfCutVertices}. Clearly $J_G$ is not unmixed, since $c_G(\{6\})=3 \neq |\{6\}|+1$. We notice that $2,4,6$ are cut vertices of $G$ and $\{2,4,6\} \in \mc C(G)$, but $\{4,6\} \notin \mc C(G)$ since $c_G(\{4,6\})=c_G(\{6\})$.

\begin{figure}[ht!]
\centering
\begin{tikzpicture}
\node[label={above:{\small $1$}}] (a) at (0,0) {};
\node[label={above:{\small $2$}}] (b) at (1,0) {};
\node[label={below:{\small $5$}}] (c) at (1.71,-0.71) {};
\node[label={above:{\small $3$}}] (d) at (1.71,0.71) {};
\node[label={above:{\small $4$}}] (e) at (2.41,0) {};
\node[label={above:{\small $6$}}] (f) at (3.41,0) {};
\node[label={right:{\small $7$}}] (g) at (4.3,0.6) {};
\node[label={right:{\small $8$}}] (h) at (4.3,-0.6) {};
\draw (0,0) -- (1,0) -- (1.71,0.71) -- (2.41,0) -- (3.41,0) -- (4.3,0.6)
(3.41,0) -- (4.3,-0.6)
(1,0) -- (1.71,-0.71) -- (2.41,0);
\end{tikzpicture}
\caption{The graph $G$} \label{F.Lemma_cutSetsConsistingOfCutVertices}
\end{figure}
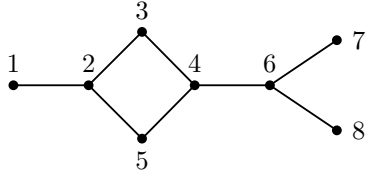
\end{example}

\begin{proposition}\label{P.removingNonCutVertex2}
Let $G$ be a graph and $S = \{v_1,\dots,v_h, w_1,\dots,w_k\} \in \mc C(G)$, where $k \geq 1$, $v_1,\dots,v_h$ are cut vertices of $G$ and $w_1,\dots,w_k$ are not cut vertices of $G$. If $G$ is accessible, then $S \setminus \{w_i\} \in \mc C(G)$ for some $i \in \{1,\dots,k\}$.
\end{proposition}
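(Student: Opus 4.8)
The plan is to proceed by induction on $|S|$, reducing first to $G$ connected by Remark~\ref{R.connected}. Since $G$ is accessible, some $s \in S$ satisfies $S \setminus \{s\} \in \mc C(G)$. If $s$ is one of the non-cut vertices $w_1,\dots,w_k$ we are done, so assume $s$ is a cut vertex, say $s = v_j$. Then $S' := S \setminus \{v_j\}$ is a non-empty cut set of the accessible graph $G$, so it contains a cut vertex by Lemma~\ref{L.cutSetsContainCutVertex}; in particular $h \geq 2$. The cut set $S'$ has $|S'| < |S|$ and still has exactly the non-cut vertices $w_1,\dots,w_k$, so the inductive hypothesis applied to $S'$ gives an index $i$ with $S' \setminus \{w_i\} = S \setminus \{v_j, w_i\} \in \mc C(G)$.

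It remains to promote this to $S \setminus \{w_i\} \in \mc C(G)$. By Lemma~\ref{L.PropertiesVerticesOfCutSets} applied to $S$ and $w_i$, this amounts to showing that $w_i$ reconnects exactly two connected components of $G \setminus S$. Let $A_1, A_2$ be the two components of $G \setminus S$ reconnected by $v_j$ (they exist by Lemma~\ref{L.PropertiesVerticesOfCutSets}, since $S \setminus \{v_j\} \in \mc C(G)$). The components of $G \setminus S'$ are $\widehat A := A_1 \cup A_2 \cup \{v_j\}$ together with the remaining components of $G \setminus S$, and again by Lemma~\ref{L.PropertiesVerticesOfCutSets} exactly two of them are reconnected by $w_i$. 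If $\widehat A$ is not among those two, then $w_i$ has no neighbour in $\widehat A$ and the two components it reconnects, being components of $G \setminus S$ as well, are exactly the ones it reconnects in $G \setminus S$; done. Otherwise $w_i$ reconnects $\widehat A$ and a component $A_3 \neq A_1, A_2$ of $G \setminus S$, and since removing $v_j$ from $\widehat A$ yields precisely $A_1, A_2$, in $G \setminus S$ the vertex $w_i$ reconnects $A_3$ together with whichever of $A_1, A_2$ meets a neighbour of $w_i$. If $w_i$ meets only one of $A_1, A_2$ we are done; the sole remaining possibility is that $w_i$ is adjacent to both $A_1$ and $A_2$, i.e., $w_i$ reconnects exactly the three components $A_1, A_2, A_3$ of $G \setminus S$.

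Ruling out this last possibility is the crux, and the step I expect to be the main obstacle; it is where the hypotheses that $v_j$ \emph{is} a cut vertex of $G$ and $w_i$ is \emph{not} enter. The idea is: let $D$ be the connected component of $G \setminus \{v_j\}$ containing $w_i$. As $w_i$ is adjacent to the connected subgraphs $A_1, A_2, A_3$ we have $A_1 \cup A_2 \cup A_3 \subseteq D$, and since $v_j$ reconnects only $A_1, A_2$ in $G \setminus S$, every neighbour of $v_j$ outside $S$ lies in $A_1 \cup A_2 \subseteq D$. Hence every component of $G \setminus \{v_j\}$ other than $D$ is joined to $v_j$ only through vertices of $S \setminus \{v_j\}$, and — as $w_i \in D$ — only through vertices of $S \setminus \{v_j, w_i\}$. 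Since $w_i$ is not a cut vertex of $G$, the graph $G \setminus \{w_i\}$ is connected, so $A_3$ is re-joined to the rest of $G \setminus \{w_i\}$ through $S \setminus \{v_j, w_i\}$ as well. One then tracks these connections, using that $S \setminus \{v_j, w_i\}$ is a cut set whose every subset is again a cut set — via Lemma~\ref{L.cutSetsConsistingOfCutVertices} when $k = 1$, so that $S \setminus \{v_j, w_i\}$ consists of cut vertices, and via the inductive hypothesis applied to $S \setminus \{v_j, w_i\}$ when $k \geq 2$ — together with the elementary observation that a connected component of $G \setminus \{v_j\}$ cannot consist only of cut vertices of $G$ (every connected graph has at least two non-cut vertices, so such a component would contain one). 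Carrying this out should force all neighbours of $v_j$ into a single component of $G \setminus \{v_j\}$, i.e., $G \setminus \{v_j\}$ connected, contradicting that $v_j$ is a cut vertex of $G$. The delicate part is exactly this last bookkeeping, which must keep track simultaneously of the components of $G \setminus S$, $G \setminus S'$, $G \setminus \{v_j\}$ and $G \setminus \{w_i\}$; once the last case is excluded, $w_i$ reconnects exactly two components of $G \setminus S$ and $S \setminus \{w_i\} \in \mc C(G)$, completing the induction.
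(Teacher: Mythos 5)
Your reduction is sound up to the final case: the induction (on $|S|$ rather than on the number of cut vertices in $S$, which is immaterial) gives $S\setminus\{v_j,w_i\}\in\mc C(G)$ for some $i$, and Lemma \ref{L.PropertiesVerticesOfCutSets} correctly reduces the problem to excluding the configuration in which $w_i$ is adjacent to $A_1$, $A_2$ and a third component $A_3$ of $G\setminus S$. (The sub-case where $w_i$ meets neither $A_1$ nor $A_2$ should be dispatched explicitly --- it contradicts $S\in\mc C(G)$ --- but that is minor.) The genuine gap is that this exclusion, which you yourself call the crux, is only sketched, and the sketch rests on a tool you do not have: you invoke that ``$S\setminus\{v_j,w_i\}$ is a cut set whose every subset is again a cut set, via Lemma \ref{L.cutSetsConsistingOfCutVertices} when $k=1$ and via the inductive hypothesis when $k\geq 2$''. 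For $k\geq 2$ the set $S\setminus\{v_j,w_i\}$ still contains non-cut vertices, so Lemma \ref{L.cutSetsConsistingOfCutVertices} does not apply, and the inductive hypothesis only yields that \emph{some one} non-cut vertex can be removed from it --- not that arbitrary subsets, in particular $S\setminus\{v_j,w_i,u\}$ for the specific neighbour $u$ of $v_j$ your bookkeeping needs, are cut sets. Indeed subsets of cut sets of accessible graphs need not be cut sets: $\{3,10\}\in\mc C(G)$ but $\{3\}\notin\mc C(G)$ in Example \ref{E.accessible}. Without this you cannot control how many components such a $u$ reconnects, and the contradiction does not materialize.

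There is also a structural mismatch with what can actually be proved this way: you aim at the stronger, ``local'' statement that the particular $w_i$ produced by the induction already works, i.e.\ that the bad configuration is outright impossible for the single pair $(v_j,w_i)$. The paper does not prove this, and it is not clear it is true. The paper instead assumes for contradiction that \emph{no} $w_\ell$ works and uses this global hypothesis at every step: it iteratively removes $v_1,w_1,\dots,w_k$ (each removal of a further $w_\ell$ requires the contradiction hypothesis for that $\ell$, to force $w_\ell$ to attach the growing block to a new component rather than join two outside components), until the remaining set $T=S\setminus\{v_1,w_1,\dots,w_k\}$ consists only of cut vertices; only then does Lemma \ref{L.cutSetsConsistingOfCutVertices} apply, showing that a neighbour $v_r$ of $v_1$ lying in another connected component of $G\setminus\{v_1\}$ reconnects exactly two components of $G\setminus T$, hence meets only one component of $G\setminus S$, contradicting $S\in\mc C(G)$ --- not, as you aim for, the connectedness of $G\setminus\{v_j\}$. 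For $k=1$ your local argument can be completed along exactly these lines, but for $k\geq 2$ the iteration over all the $w_\ell$'s, and hence the global contradiction hypothesis, is precisely what is missing from your proposal.
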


\begin{proof}
We may assume that $G$ is connected by Remark \ref{R.connected} and $h \geq 1$ by Lemma \ref{L.cutSetsContainCutVertex}. We proceed by induction on the number of cut vertices in $S$, $h \geq 1$. If $h=1$ the claim follows by the definition of accessible graph and by Lemma \ref{L.cutSetsContainCutVertex}. Hence, assume $h>1$. Let $G_1, \dots, G_{h+k+1}$ be the connected components of $G \setminus S$ and suppose by contradiction that $S \setminus \{w_i\} \notin \mc C(G)$ for $i=1, \dots, k$.

Since $G$ is accessible, there exists $v_i \in S$, say $v_1$, such that $S \setminus \{v_1\} \in \mc C(G)$. By induction, we may assume without loss of generality that $S\setminus \{v_1,w_1 \} \in \mc C(G)$.

By Lemma \ref{L.PropertiesVerticesOfCutSets}, $v_1$ reconnects exactly two components of $G \setminus S$, say $G_{1}$ and $G_{2}$. Since $S \setminus \{v_1, w_1\} \in \mc C(G)$, $w_1$ reconnects two components of $G \setminus (S\setminus \{v_1\})$. If it only reconnects $G_{i_1}$ and $G_{i_2}$ with $i_1$, $i_2 \geq 3$, then it reconnects exactly two connected components also in $G\setminus S$ and, then, $S \setminus \{w_1\}$ would be a cut set. Thus, $w_1$ is adjacent to some vertices of $G_1$, of $G_2$, and of another component, say $G_3$. This implies that the connected components of $G \setminus (S\setminus \{v_1,w_1\})$ are $H=G[V(G_1 \cup G_2 \cup G_3) \cup \{v_1,w_1\}]$ and $G_4, \dots, G_{h+k+1}$.

If $k>1$, by induction there exists a vertex $w_j \in S$, say $w_2$, such that $S \setminus \{v_1,w_1, w_2\} \in \mc C(G)$.
As before, $w_2$ is adjacent to some vertex of $G_1 \cup G_2 \cup G_3$ and of another $G_i$, say $G_4$. Iterating this process, we obtain that $T = S\setminus \{v_1,w_1, \dots, w_k\}$ is a cut set of $G$ and, up to relabeling the $G_j$'s, the connected components of $G \setminus T$ are $H'= G[V(G_1 \cup G_2 \cup G_3 \cup G_{4} \cup \dots \cup G_{k+2}) \cup \{v_1, w_1, \dots, w_k\}]$ and $G_{k+3}, \dots, G_{h+k+1}$.

By construction, $H' \setminus \{v_1\}$ is connected.
On the other hand, $v_1$ is a cut vertex of $G$, thus $v_1$ is adjacent to some $v_r$ with $r \in \{2,\dots,h\}$, where $v_r$ is not adjacent to vertices of $H'\setminus \{v_1\}$, otherwise removing $v_1$ from $G$ would not disconnect $G$.
Moreover, since $T$ consists only of cut vertices, Lemma \ref{L.cutSetsConsistingOfCutVertices} implies that also $T \setminus \{v_r\} \in \mc C(G)$ and, thus, $v_r$ reconnects exactly two connected components of $G \setminus T$ by Lemma \ref{L.PropertiesVerticesOfCutSets}.
Therefore, in $G \setminus T$ the vertex $v_r$ reconnects $H'$ to exactly one other connected component $G_q$ with $k+3 \leq q \leq h+k+1$. Consequently, since $v_r$ is not adjacent to any vertex of $H'\setminus \{v_1\}$, if we add back $v_r$ to $G \setminus S$, we have that $v_r$ is adjacent only to some vertex of $G_q$ and, hence, $c_G(S)=c_G(S\setminus \{v_r\})$, which is a contradiction.
\end{proof}

\begin{example}
Let $G$ be the graph in Figure \ref{F.accessibleGraph}. In Example \ref{E.accessible} we saw that $G$ is accessible. Let us consider the cut set $S = \{2,5,10, 4,7\}$, where $2,5,10$ are cut vertices and $4,7$ are not cut vertices of $G$. Since $G$ is accessible, by Proposition \ref{P.removingNonCutVertex2}, we can get a new cut set by removing a non-cut vertex from $S$: indeed $T = S \setminus \{7\} = \{2,5,10, 4\} \in \mc C(G)$ and, applying again the same result, we still get a cut set by removing the only non-cut vertex from $T$, i.e., $U = T \setminus \{4\} = \{2,5,10\} \in \mc C(G)$. Now, $U$ only consists of cut vertices and by Lemma \ref{L.cutSetsConsistingOfCutVertices}, every subset of $U$ is a cut set of $G$.
\end{example}

\section{Strongly unmixed binomial edge ideals}

In Section 3 we saw that the accessibility of $G$ is necessary for the Cohen-Macaulayness of $J_G$. In order to study the remaining implication of Conjecture \ref{C.mainConj}, we present a new combinatorial condition, called strong unmixedness, which turns out to be sufficient for Cohen-Macaulayness. In the next section, we are going to show that for large classes of graphs, being accessible is equivalent to the strong unmixedness of the binomial edge ideal, thus proving the conjecture for those graphs.

\medskip
Let $G$ be a graph and let $v$ be a vertex of $G$. We can decompose $J_G$ as $J_G = A \cap B$, where
\[
A = \bigcap_{\substack{S \in \mc C(G)\\ v \notin S}} P_S(G) \quad \text{ and } \quad B = \bigcap_{\substack{S \in \mc C(G)\\ v \in S}} P_S(G).
\]
Since $J_G = A \cap B$, we have the following short exact sequence:
\begin{equation}\label{Eq.shortExactSequenceFinal}
\tag{$\star$} 0 \longrightarrow R/J_G \longrightarrow R/A \oplus R/B \longrightarrow R/(A+B) \longrightarrow 0.
\end{equation}

By Lemma \ref{L.completingNeighborsVertex} (1) it is clear that $A=J_{G_v}$. The above short exact sequence has been used in other papers about binomial edge ideals, however the structure of $B$ and $A+B$ is not known in general. Our next goal is to describe these two ideals when $v$ is a cut vertex of $G$ and $J_{G \setminus \{v\}}$ is unmixed.

\begin{notation}
Let $v$ be a cut vertex of $G$ and $H_1$ a connected component of $G\setminus \{v\}$. With abuse of notation, we denote by $N_{H_1}(v)$ the set $\{w \in V(H_1) : \{v,w\} \in E(G)\}$.
\end{notation}

\begin{proposition} \label{P.J_G-v_unmixed}
Let $v$ be a cut vertex of a connected graph $G$ and assume that $J_G$ is unmixed. Let $H_1$ and $H_2$ denote the connected components of $H=G\setminus \{v\}$. The following statements are equivalent:
\begin{enumerate}
\item[$\mathrm{(1)}$] $J_{H}$ is unmixed;
\item[$\mathrm{(2)}$] if $S \in \mc C(H)$, then $N_{H_1}(v) \not\subseteq S$ and $N_{H_2}(v) \not\subseteq S$;
\item[$\mathrm{(3)}$] $\mc C(H)=\{S \subseteq V(H) : S \cup \{v\} \in \mc C(G)\}$.
\end{enumerate}

If the above conditions hold, then the ideals in the sequence \eqref{Eq.shortExactSequenceFinal} are $B=(x_v,y_v)+J_{G\setminus \{v\}}$ and $A+B=(x_v,y_v)+J_{G_v\setminus \{v\}}$.
\end{proposition}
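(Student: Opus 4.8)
My plan is to reduce everything to the elementary identity $c_G(W\cup\{v\})=c_H(W)$ for $W\subseteq V(H)$, which holds since $G\setminus(W\cup\{v\})=H\setminus W$, and then to prove the cycle $(1)\Rightarrow(2)\Rightarrow(3)\Rightarrow(1)$. Two consequences of the identity should be isolated first. If $v\in T\subseteq V(G)$, then $G\setminus T$ and $H\setminus(T\setminus\{v\})$ have the same connected components, so $P_T(G)=(x_v,y_v)+P_{T\setminus\{v\}}(H)$, where $P_U(H)$ denotes the usual ideal attached to an arbitrary $U\subseteq V(H)$. Also, applying the cut-set inequality at each $s\in S$ together with the identity shows that $S\cup\{v\}\in\mc C(G)$ forces $S\in\mc C(H)$; hence $\{S : S\cup\{v\}\in\mc C(G)\}\subseteq\mc C(H)$ always holds, and condition (3) is exactly the reverse inclusion. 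Finally, I would record at the start that unmixedness of $J_G$ forces $c_G(\{v\})=2$, so $H$ really has exactly the two components $H_1,H_2$, and that $N_{H_i}(v)\neq\emptyset$ because $G$ is connected.

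The implication $(3)\Rightarrow(1)$ is immediate: for $S\in\mc C(H)$ we get $S\cup\{v\}\in\mc C(G)$, so unmixedness of $J_G$ gives $c_H(S)=c_G(S\cup\{v\})=|S|+2$, which is unmixedness of $J_H$ since $H$ has two components. For $(2)\Rightarrow(3)$ I take $S=S_1\sqcup S_2\in\mc C(H)$ with $S_i=S\cap V(H_i)\in\mc C(H_i)$ (Remark~\ref{R.connected}(i)) and check directly that $S\cup\{v\}\in\mc C(G)$: the condition at a vertex $s\in S$ reduces via the identity to $c_H(S\setminus\{s\})<c_H(S)$, true since $S\in\mc C(H)$, while the condition at $v$ asks for $c_G(S)<c_H(S)$, and here (2) is exactly what is needed --- it provides a neighbour of $v$ in $H_1\setminus S_1$ and one in $H_2\setminus S_2$, which lie in distinct components of $H\setminus S$, so re-inserting $v$ merges at least two components and the count strictly drops.

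The main obstacle is $(1)\Rightarrow(2)$, which I would prove by contraposition: assuming (2) fails, I produce a violation of (1). Say $N_{H_1}(v)\subseteq S$ for some $S\in\mc C(H)$ (the $H_2$ case is symmetric), and set $S_1=S\cap V(H_1)$; then $S_1\in\mc C(H_1)$, $N_{H_1}(v)\subseteq S_1$, and $S_1\neq\emptyset$ (otherwise $N_{H_1}(v)=\emptyset$, contradicting connectedness). The delicate point is that $S_1\in\mc C(G)$: since $v$ has no neighbour in $H_1\setminus S_1$, the components of $G\setminus S_1$ are those of $H_1\setminus S_1$ together with the single component $\{v\}\cup V(H_2)$, so $c_G(S_1)=c_{H_1}(S_1)+1$; and for $s\in S_1$, reinserting $s$ merges at least two components of $H_1\setminus S_1$ (because $S_1\in\mc C(H_1)$), possibly also attaching them to the $v$-side, so $c_G(S_1\setminus\{s\})<c_G(S_1)$ in every case --- here one separates $s\in N_{H_1}(v)$ from $s\notin N_{H_1}(v)$ to pin down the count, but the strict inequality is clear either way. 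With $S_1\in\mc C(G)$, unmixedness of $J_G$ gives $c_{H_1}(S_1)=c_G(S_1)-1=|S_1|$; since $H_1$ is connected and $S_1\neq\emptyset$, this says $J_{H_1}$ is not unmixed, hence neither is $J_H$ by Remark~\ref{R.connected}(ii) --- the negation of (1).

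It remains to read off $B$ and $A+B$ assuming (3). For $T\in\mc C(G)$ with $v\in T$ we have $P_T(G)=(x_v,y_v)+P_{T\setminus\{v\}}(H)$, and by (3) the sets $T\setminus\{v\}$ run exactly through $\mc C(H)$; since $x_v,y_v$ appear in none of the $P_U(H)$, forming intersections commutes with adding $(x_v,y_v)$, so $B=\bigcap_{U\in\mc C(H)}\bigl((x_v,y_v)+P_U(H)\bigr)=(x_v,y_v)+\bigcap_{U\in\mc C(H)}P_U(H)=(x_v,y_v)+J_H=(x_v,y_v)+J_{G\setminus\{v\}}$. For $A+B$, recall $A=J_{G_v}$ by Lemma~\ref{L.completingNeighborsVertex}(1); every generator $f_{vj}=x_vy_j-x_jy_v$ of $J_{G_v}$ lies in $(x_v,y_v)$, so $(x_v,y_v)+J_{G_v}=(x_v,y_v)+J_{G_v\setminus\{v\}}$, and since $E(G\setminus\{v\})\subseteq E(G_v\setminus\{v\})$ we get $J_{G\setminus\{v\}}\subseteq J_{G_v\setminus\{v\}}$; therefore $A+B=(x_v,y_v)+J_{G_v}+J_{G\setminus\{v\}}=(x_v,y_v)+J_{G_v\setminus\{v\}}$.
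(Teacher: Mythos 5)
Your argument is correct and follows essentially the same route as the paper's proof: the same identity $H\setminus S=G\setminus(S\cup\{v\})$ drives $(2)\Rightarrow(3)\Rightarrow(1)$, and your $(1)\Rightarrow(2)$ (by contraposition) hinges, exactly as in the paper, on showing that $S\cap V(H_1)$ is simultaneously a cut set of $H$ and of $G$ and then comparing component counts via unmixedness. The only difference is that you spell out the final description of $B$ and $A+B$, which the paper dismisses as an easy consequence, and your verification there is also correct.
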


\begin{proof}
$(1) \Rightarrow (2)$: Assume by contradiction that there exists $S \in \mc C(H)$ such that $N_{H_1}(v) \subseteq S$. In particular, $S'=S \cap V(H_1)$ is a cut set of $H$ by Remark \ref{R.connected}. Moreover, $S'$ is also a cut set of $G$ because $N_{H_1}(v) \subseteq S'$. Since $J_G$ and $J_{H}$ are unmixed and $H$ has two connected components, we have
\[
|S'|+1=c_{G}(S')=c_{H}(S')=|S'|+2,
\]
which is a contradiction. \\
$(2) \Rightarrow (3)$: It is clear that $\{S \subseteq V(H) : S \cup \{v\} \in \mc C(G)\} \subseteq \mc C(H)$. Conversely, let $S$ be a cut set of $H$. Since $H \setminus S=G \setminus (S \cup \{v\})$ for every $s \in S$, we have
\[
c_G((S \cup \{v\}) \setminus \{s\})=c_H(S \setminus \{s\}) < c_{H}(S)=c_G(S \cup \{v\}).
\]
Moreover, $c_G(S)<c_G(S \cup \{v\})$ because $N_{H_1}(v) \not\subseteq S$ and $N_{H_2}(v) \not\subseteq S$. Thus, $S\cup \{v\}\in \mc C(G)$. \\
$(3) \Rightarrow (1)$: Since $H \setminus S=G \setminus (S \cup \{v\})$ for every $S \in \mc C(H)$, the claim follows by the unmixedness of $J_G$.

The last part of the statement is an easy consequence of the definition of $B$ and of the fact that $A = J_{G_v}$.
\end{proof}

\begin{example}\label{E.(G92_ragno)-vNotUnmixed}
If $G$ is a connected graph with $J_G$ unmixed and $v$ is a cut vertex of $G$, it is not always true that $J_{G \setminus \{v\}}$ is unmixed. For example, given the graph $G$ in Figure \ref{F.G92StronglyUnmixed} one can check that $J_G$, $J_{G \setminus \{2\}}$ and $J_{G \setminus \{8\}}$ are unmixed, but $J_{G \setminus \{7\}}$ is not unmixed. In fact, $\{3,4,8\} \in \mc C(G \setminus \{7\})$ and $c_{G \setminus \{7\}}(\{3,4,8\}) = 4 \neq |\{3,4,8\}| + 2$.

It can also be that no cut vertex works. Let $F$ be the graph in Figure \ref{F.G-vNotUnmixed}, whose cut vertices are $2$ and $6$. By \cite[Example 2.2]{BMS18}, $J_F$ is unmixed, but $J_{F \setminus \{2\}}$ and $J_{F \setminus \{6\}}$ are not unmixed since $c_{F \setminus \{2\}}(\{3,5\}) = c_{F \setminus \{6\}}(\{3,5\}) = 3 \neq |\{3,5\}| + 2$.

Notice that in these cases, when $J_{G \setminus \{v\}}$ is not unmixed, $B$ does not have the form $(x_v,y_v) + J_{G\setminus \{v\}}$ (see Proposition \ref{P.J_G-v_unmixed}).

\vspace{-2mm}
\begin{figure}[ht!]
\begin{subfigure}[c]{0.45\textwidth}
\centering
\begin{tikzpicture}[scale=0.95]
\node[label={below:{\small $1$}}] (a) at (-0.5,0) {};
\node[label={below:{\small $2$}}] (b) at (1,0) {};
\node[label={below:{\small $3$}}] (c) at (2.5,0) {};
\node[label={above:{\small $4$}}] (d) at (0,0.75) {};
\node[label={above:{\small $5$}}] (e) at (1.25,0.75) {};
\node[label={above:{\small $6$}}] (f) at (-0.5,1.5) {};
\node[label={above:{\small $7$}}] (g) at (1,1.5) {};
\node[label={above:{\small $8$}}] (h) at (2.5,1.5) {};
\node[label={above:{\small $9$}}] (h) at (4,1.5) {};
\draw (-0.5,0) -- (1,0) -- (2.5,0) -- (1.25,0.75) -- (0,0.75) -- (1,0) -- (2.5,1.5) -- (1,1.5) -- (0,0.75)
(-0.5,1.5) -- (1,1.5)
(2.5,0) -- (2.5,1.5) -- (4,1.5)
(1.25,0.75) -- (2.5,1.5);
\end{tikzpicture}
\caption{A graph $G$ with $J_{G \setminus \{7\}}$ not unmixed} \label{F.G92StronglyUnmixed}
\end{subfigure}
\begin{subfigure}[c]{0.45\textwidth}
\centering
\begin{tikzpicture}[scale=0.95]
\node[fill=white, draw=white] (h) at (0,2.3) {};
\node[label={above:$1$}] (a) at (0,1.5) {};
\node[label={above:$3$}] (b) at (1,1.5) {};
\node[label={above:$5$}] (c) at (2,1.5) {};
\node[label={above:$7$}] (d) at (3,1.5) {};
\node[label={below:$2$}] (e) at (0.5,0) {};
\node[label={below:$4$}] (f) at (1.5,0) {};
\node[label={below:$6$}] (g) at (2.5,0) {};
\node[fill=white, draw=white] (h) at (0,-0.4) {};
\draw (0,1.5) -- (0.5,0) -- (1,1.5) -- (1.5,0) -- (2,1.5) -- (2.5,0) -- (3,1.5)
(0.5,0) -- (2,1.5)
(1,1.5) -- (2.5,0);
\end{tikzpicture}
\caption{A graph $F$ with $J_{F \setminus \{v\}}$ not unmixed for every cut vertex $v$} \label{F.G-vNotUnmixed}
\end{subfigure}
\vspace{-4mm}
\caption{} \label{F.G-vNotUnmixed2}
\end{figure}
\end{example}

\begin{remark} \label{R.NeighborsCutSet}
Let $v$ be a cut vertex of $G$ and let $H_1$ be a connected component of $G \setminus \{v\}$. If there exists a cut set $S$ of $G\setminus \{v\}$ containing $N_{H_1}(v)$, then $N_{H_1}(v)$ is a cut set of $G$. Indeed, every $w \in N_{H_1}(v)$ is adjacent to $v$ and to some vertices of $V(H_1) \setminus S \subseteq V(H_1)\setminus N_{H_1}(v)$, which is not in the same connected component of $v$ in $G \setminus N_{H_1}(v)$.
\end{remark}

In light of Proposition \ref{P.J_G-v_unmixed}, we now describe the cut sets of $G_v \setminus \{v\}$.

\begin{lemma}\label{L.Gv-vHasRCSP}
Let $G$ be a connected graph with $J_G$ unmixed and let $v$ be a cut vertex such that $J_{G \setminus \{v\}}$ is unmixed. Then
$$\mc C(G_v \setminus \{v\})=\mc C(G_v) \setminus \{S \in \mc C(G_v) : N_G(v) \subseteq S\}.$$
In particular, $J_{G_v \setminus \{v\}}$ is unmixed.
\end{lemma}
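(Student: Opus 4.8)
The plan is to express the component function of $G_v \setminus \{v\}$ in terms of that of $G_v$, and then to analyse separately the cut sets $S$ with $N_G(v) \subseteq S$ and those with $N_G(v) \not\subseteq S$. I would begin with the easy preliminaries: $G_v$ is connected, and by Lemma \ref{L.completingNeighborsVertex} (2) the ideal $J_{G_v}$ is unmixed, so $c_{G_v}(S) = |S|+1$ for every $S \in \mc C(G_v)$; moreover, by Lemma \ref{L.completingNeighborsVertex} (1), every cut set of $G_v$ avoids $v$. Finally $G_v \setminus \{v\}$ is connected: since $v$ is a cut vertex of $G$, both $N_{H_1}(v)$ and $N_{H_2}(v)$ are nonempty and in $G_v$ every vertex of the first is joined to every vertex of the second, so $H_1$ and $H_2$ lie in one component of $G_v \setminus \{v\}$.

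The key observation is that, for every $W \subseteq V(G) \setminus \{v\}$, one has $c_{G_v \setminus \{v\}}(W) = c_{G_v}(W \cup \{v\})$, and the right-hand side equals $c_{G_v}(W)$ if $N_G(v) \not\subseteq W$ and $c_{G_v}(W) - 1$ if $N_G(v) \subseteq W$. The first identity is immediate; for the second, since $N_{G_v}(v) = N_G(v)$ is a clique of $G_v$, the neighbours of $v$ in $G_v \setminus W$ form the clique $N_G(v) \setminus W$, so if $N_G(v) \subseteq W$ then $v$ is isolated in $G_v \setminus W$ (deleting it drops the count by one), and otherwise $v$ sits in a component that stays connected after deleting $v$ (any path through $v$ is shortcut by an edge of $N_G(v) \setminus W$). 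Applying this with $W = S$ and $W = S \setminus \{s\}$: if $N_G(v) \not\subseteq S$ then $N_G(v) \not\subseteq S \setminus \{s\}$ for every $s$, both correction terms vanish, and $S \in \mc C(G_v \setminus \{v\})$ if and only if $S \in \mc C(G_v)$. This disposes of the cut sets not containing $N_G(v)$.

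The core of the argument, and the step I expect to be the main obstacle, is to show that no $S$ with $N_G(v) \subseteq S$ lies in $\mc C(G_v \setminus \{v\})$. Here I would switch to a comparison with $G \setminus \{v\}$: for $W = S$ and for $W = S \setminus \{s\}$ with $s \in S$ one has $|N_G(v) \setminus W| \le 1$, so the clique of edges added on $N_G(v)$ in passing from $G \setminus \{v\}$ to $G_v \setminus \{v\}$ contributes no edge once $W$ is deleted; hence $(G_v \setminus \{v\}) \setminus W = (G \setminus \{v\}) \setminus W$ and $c_{G_v \setminus \{v\}}(W) = c_{G \setminus \{v\}}(W)$ for all these $W$. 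Therefore $S \in \mc C(G_v \setminus \{v\})$ would force $S \in \mc C(G \setminus \{v\})$; but $J_G$ and $J_{G \setminus \{v\}}$ are unmixed, so Proposition \ref{P.J_G-v_unmixed} (2) yields $N_{H_1}(v) \not\subseteq S$, contradicting $N_{H_1}(v) \subseteq N_G(v) \subseteq S$.

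Combining the two cases gives $\mc C(G_v \setminus \{v\}) = \{S \in \mc C(G_v) : N_G(v) \not\subseteq S\} = \mc C(G_v) \setminus \{S \in \mc C(G_v) : N_G(v) \subseteq S\}$, using that cut sets of $G_v$ avoid $v$ so that the right-hand set consists of legitimate subsets of $V(G_v \setminus \{v\})$. For the final assertion, given $S \in \mc C(G_v \setminus \{v\})$ we then have $N_G(v) \not\subseteq S$, so by the key observation $c_{G_v \setminus \{v\}}(S) = c_{G_v}(S) = |S| + 1$; since $G_v \setminus \{v\}$ is connected, $J_{G_v \setminus \{v\}}$ is unmixed.
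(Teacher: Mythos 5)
Your proof is correct and follows essentially the same route as the paper's: you compare the component counts of $G_v \setminus \{v\}$, $G_v$ and $G \setminus \{v\}$ using the clique structure of $N_G(v)$ in $G_v$, and you rule out cut sets containing $N_G(v)$ by observing that such a set would be a cut set of $G \setminus \{v\}$ containing $N_{H_1}(v)$, contradicting Proposition \ref{P.J_G-v_unmixed}. The only difference is cosmetic: you package the component comparison as a uniform counting identity (and avoid citing the free-vertex property of $v$ in $G_v$), whereas the paper argues component by component, but the key ideas coincide.
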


\begin{proof}
Let $S \in \mc C(G_v \setminus \{v\})$. It is clear that $S \in \mc C(G_v)$ because $v$ is a free vertex of $G_v$, see \cite[Proposition 2.1]{RR14}.
If $C_1,C_2,\dots,C_r$ are the connected components of $G \setminus S$ and $v \in C_1$, the connected components of $G_v \setminus (\{v\} \cup S)$ are $(C_1)_v \setminus \{v\},C_2,\dots,C_r$, with $(C_1)_v \setminus \{v\}$ possibly empty. Suppose by contradiction that $N_G(v) \subseteq S$. In this case $(C_1)_v \setminus \{v\}=\emptyset$ and the connected components of $G_v \setminus (\{v\} \cup S)$ and $G \setminus (\{v\} \cup  S)$ are $C_2,\dots,C_r$. Clearly $S$ is also a cut set of $G \setminus \{v\}$ and, if $H_1$ and $H_2$ are the connected components of $G \setminus \{v\}$, $S$ is a cut set of $G \setminus \{v\}$ containing $N_{H_1}(v)$ and $N_{H_2}(v)$. This contradicts Proposition \ref{P.J_G-v_unmixed} because $J_{G \setminus \{v\}}$ is unmixed.

Conversely, let $S \in \mc C(G_v)$, i.e., $S \in \mc C(G)$ and $v \notin S$ by Lemma \ref{L.completingNeighborsVertex} (1), and suppose that $N_G(v) \nsubseteq S$. Hence, there exists $w \in N_G(v) \setminus S$ in the connected component $(C_1)_v$ of $G_v \setminus S$. Let $x \in S$ be a vertex adjacent to $v$ that reconnects $(C_1)_v$ to another component $D$ of $G_v \setminus S$. Then, $x$ is adjacent to $w$ in $G_v \setminus \{v\}$ and if we add back $x$ to $(G_v \setminus \{v\}) \setminus S$, it reconnects $(C_1)_v \setminus \{v\}$ to $D$.
This shows that $c_{G_v \setminus \{v\}}(S \setminus \{x\}) < c_{G_v \setminus \{v\}}(S)$ for every $x \in S$, hence $S \in \mc C(G_v \setminus \{v\})$.

As for the last part, let $S \in \mc C(G_v \setminus \{v\})$. We have $(C_1)_v \setminus \{v\} \neq \emptyset$ in $G_v \setminus (\{v\} \cup S)$ because $N_G(v) \not \subseteq S$. Then, $c_{G_v \setminus \{v\}}(S)=c_G(S)=|S|+1$ and $J_{G_v \setminus \{v\}}$ is unmixed.
\end{proof}

We now introduce the notion of strongly unmixed binomial edge ideal, which involves the ideals appearing in the short exact sequence \eqref{Eq.shortExactSequenceFinal}. This sequence will allow us to prove that these ideals are Cohen-Macaulay.

\begin{definition}\label{D.StronglyUnmixed}
Let $G$ be a graph. We say that $J_G$ is {\em strongly unmixed} if the connected components of $G$ are complete graphs or if $J_G$ is unmixed and there exists a cut vertex $v$ of $G$ such that $J_{G \setminus \{v\}}$, $J_{G_v}$ and $J_{G_v \setminus \{v\}}$ are strongly unmixed.
\end{definition}

Strong unmixedness is an inherently combinatorial condition because so is unmixedness.

\begin{remark}
In order to show that $J_G$ is strongly unmixed, we do not need to check the unmixedness of $J_{G_v}$ and $J_{G_v \setminus \{v\}}$ since this follows from the unmixedness of $J_G$ and $J_{G \setminus \{v\}}$ by Lemmas \ref{L.completingNeighborsVertex} and \ref{L.Gv-vHasRCSP}.
\end{remark}

\begin{examples}\label{E.stronglyUnmixed}
\textbf{(a)} Let $n \geq 2$ and $k \geq 1$. Consider the graph $G_{n,k}$ on $n+k-1$ vertices, with edge set
\[
E(G_{n,k})=\{\{1,2\},\{2,3\},\dots,\{n-1,n\}\} \cup \{\{i,j\}: n \leq i<j \leq n+k-1\}.
\]
Notice that $G_{n,k}$ is a complete graph on $k$ vertices with a path on $n$ vertices attached to one of its vertices, see Figure \ref{F.G4,5StronglyUnmixed}; in particular, the graph $G_{n,1}$ is a path on $n$ vertices. We prove that $J_{G_{n,k}}$ is strongly unmixed by induction on $n \geq 2$.

By \cite[Theorem 2.7]{RR14}, $J_{G_{n,k}}$ is unmixed. Let $n=2$. If $k=1$, the graph $G_{2,1}$ is a single edge and we have nothing to show. Let $k>1$; then $G_{2,k}$ is a single edge attached to a complete graph on $k$ vertices. Consider the cut vertex $v=2$. The connected components of $G_{2,k} \setminus \{v\}$ are $\{1\}$ and a complete graph on $k-1$ vertices. The graphs $(G_{2,k})_v$ and $(G_{2,k})_v \setminus \{v\}$ are complete, thus $J_{G_{2,k} \setminus \{v\}}, J_{(G_{2,k})_v}$, and $J_{(G_{2,k})_v \setminus \{v\}}$ are strongly unmixed.

Now, fix $n>2$ and consider the cut vertex $v=n$ of $G_{n,k}$. The connected components of $G_{n,k} \setminus \{v\}$ are a complete graph on $k-1$ vertices and a path on $n-1$ vertices, which is $G_{n-1,1}$. The graph $(G_{n,k})_v$ equals $G_{n-1,k+1}$ and the graph $(G_{n,k})_v \setminus \{v\}$ is isomorphic to $G_{n-1,k}$. For all three graphs, we conclude that the corresponding binomial edge ideal is strongly unmixed by induction on $n$.

In particular, it follows that the binomial edge ideal of a path is strongly unmixed.
\medskip

\textbf{(b)} Let $H$ be the graph in Figure \ref{F.G44StronglyUnmixed}. We show that $J_H$ is strongly unmixed. First of all, notice that $J_H$ is unmixed. In fact, the cut sets of $H$ are
\[
\mc C(H)=\{\emptyset,\{2\},\{6\},\{2,6\},\{2,4,6\}\}
\]
and it is easy to see that $c_H(S)=|S|+1$ for every $S \in \mc C(H)$. We consider the cut vertex $v=6$ and show that $J_{H \setminus \{6\}}$ is strongly unmixed. The connected components of $H \setminus \{6\}$ are the isolated point $7$ and the graph $C = \mathrm{cone}(2, P \sqcup \{1\})$, where $E(P)=\{\{3,4\},\{4,5\}\}$. It suffices to prove that $J_C$ is strongly unmixed. By part \textbf{(a)}, $J_P$ is strongly unmixed, hence in particular unmixed. Thus, by Theorem \ref{T.CMcones} (2), $J_C$ is unmixed. The vertex $2$ is now a cut vertex of $C$. The connected components of $C \setminus \{2\}$ are $P$ and $\{1\}$, hence $J_{C \setminus \{2\}}$ is strongly unmixed by part \textbf{(a)}. Clearly, $J_{C_2}$ and $J_{C_2 \setminus \{2\}}$ are strongly unmixed, since $C_2$ and $C_2 \setminus \{2\}$ are complete graphs. Now, the graph $H_6$ is a complete graph on the vertices $\{2,3,4,5,6,7\}$ with the edge $\{1,2\}$ attached, i.e. it is isomorphic to $G_{2,6}$, whereas $H_6 \setminus \{6\}$ is isomorphic to $G_{2,5}$. Therefore, $J_{H_6}$ and $J_{H_6 \setminus \{6\}}$ are strongly unmixed by part \textbf{(a)}, hence $J_H$ is strongly unmixed.

\begin{figure}[ht!]
\begin{subfigure}[c]{0.45\textwidth}
\centering
\begin{tikzpicture}[scale=0.9]
\node[label={above:{\small $8$}}] (a) at (-1.43,1.96) {};
\node[label={right:{\small $7$}}] (b) at (0,1.5) {};
\node[label={right:{\small $6$}}] (c) at (0,0) {};
\node[label={below:{\small $5$}}] (d) at (-1.43,-0.46) {};
\node[label={above:{\small $4$}}] (e) at (-2.31,0.75) {};
\node[label={above:{\small $3$}}] (f) at (-3.81,0.75) {};
\node[label={above:{\small $2$}}] (g) at (-5.31,0.75) {};
\node[label={above:{\small $1$}}] (h) at (-6.81,0.75) {};
\draw (h) -- (g) -- (f) -- (e) -- (d) -- (c) -- (b) -- (a) -- (e)
(e) -- (c) -- (a) -- (d) -- (b) -- (e);
\end{tikzpicture}
\caption{The graph $G_{4,5}$} \label{F.G4,5StronglyUnmixed}
\end{subfigure}
\begin{subfigure}[c]{0.45\textwidth}
\centering
\begin{tikzpicture}[scale=0.9]
\node[label={right:{\small $1$}}] (a) at (1.5,0) {};
\node[label={below:{\small $2$}}] (b) at (0,0) {};
\node[label={below:{\small $3$}}] (c) at (-1.43,-0.46) {};
\node[label={left:{\small $4$}}] (d) at (-2.31,0.75) {};
\node[label={above:{\small $5$}}] (e) at (-1.43,1.96) {};
\node[label={above:{\small $6$}}] (f) at (0,1.5) {};
\node[label={right:{\small $7$}}] (g) at (1.5,1.5) {};
\draw (a) -- (b) -- (c) -- (d) -- (e) -- (f) -- (g)
(e) -- (b) -- (d) -- (f) -- (c)
(b) -- (f);
\end{tikzpicture}
\caption{A graph $H$ with $J_H$ strongly unmixed} \label{F.G44StronglyUnmixed}
\end{subfigure}
\caption{}
\end{figure}
\end{examples}

\begin{example}
If $G$ is a graph with $J_G$ strongly unmixed, not every vertex $v$ of $G$ produces $J_{G \setminus \{v\}}$, $J_{G_v}$, and $J_{G_v \setminus \{v\}}$ strongly unmixed. For example, let $G$ be the graph in Figure \ref{F.G92StronglyUnmixed}. We will show in Example \ref{E.G92StronglyUnmixed} that $J_G$ is strongly unmixed. This can also be done by applying Definition \ref{D.StronglyUnmixed} recursively, choosing $v=6$ at the first step, together with Example \ref{E.stronglyUnmixed} \textbf{(a)}. However, in Examples \ref{E.(G92_ragno)-vNotUnmixed} we showed that, if we consider the cut vertex $8$, then $J_{G \setminus \{8\}}$ is not unmixed.
\end{example}

\begin{remark}\label{R.conncomp}
If $G$ is a graph with connected components $G_1,\dots,G_c$ and $J_G$ strongly unmixed, then $J_{G_i}$ is strongly unmixed for every $i$. This can be easily shown by induction on the number of vertices of $G$ and then on the number of cut vertices of $G$. The converse also holds and it follows immediately from the definition.
\end{remark}

Now we prove the main result of this section, showing that the strong unmixedness of $J_G$ is sufficient for Cohen-Macaulayness.

\begin{theorem}\label{T.StronglyUnmixedImpliesCM}
Let $G$ be a graph. If $J_G$ is strongly unmixed, then $J_G$ is Cohen-Macaulay.
\end{theorem}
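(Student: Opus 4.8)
The plan is to prove the statement by well-founded induction on the pair $(n,N)$, ordered lexicographically, where $n=|V(G)|$ and $N=\binom{n}{2}-|E(G)|$ is the number of non-edges of $G$. If $G$ is disconnected, its connected components are strongly unmixed by Remark \ref{R.conncomp}, have fewer than $n$ vertices, hence have Cohen--Macaulay binomial edge ideals by the inductive hypothesis, so that $J_G$ is Cohen--Macaulay by Remark \ref{R.connected}~(iv); thus one may assume $G$ connected. If $G$ is complete, then $\mc C(G)=\{\emptyset\}$ and $J_G$ is the ideal of $2$-minors of a generic $(2\times n)$-matrix, hence Cohen--Macaulay by Remark \ref{R.NoCutSetsComplete}. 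This is the base case.

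In the remaining case $G$ is connected, not complete, and $J_G$ is strongly unmixed, so by Definition \ref{D.StronglyUnmixed} the ideal $J_G$ is unmixed -- whence $\dim(R/J_G)=n+1$ -- and there is a cut vertex $v$ of $G$ with $J_{G\setminus\{v\}}$, $J_{G_v}$ and $J_{G_v\setminus\{v\}}$ strongly unmixed. The first thing I would check is that all three graphs are strictly smaller than $G$ in the chosen order, so that the inductive hypothesis applies to them: $G\setminus\{v\}$ and $G_v\setminus\{v\}$ have $n-1$ vertices, while $G_v$ has the vertex set of $G$ but strictly more edges. Indeed, since $G$ is connected each component of $G\setminus\{v\}$ contains a neighbour of $v$; choosing neighbours $a$ and $b$ of $v$ in two distinct components, the pair $\{a,b\}$ is not an edge of $G$ (otherwise $a$ and $b$ would be in the same component of $G\setminus\{v\}$), so $N_G(v)$ is not a clique and $G_v$ properly contains $G$. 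Hence $J_{G\setminus\{v\}}$, $J_{G_v}$ and $J_{G_v\setminus\{v\}}$ are Cohen--Macaulay by induction.

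Next I would use the short exact sequence \eqref{Eq.shortExactSequenceFinal} associated with $v$, namely $0\to R/J_G\to R/A\oplus R/B\to R/(A+B)\to 0$ with $A=J_{G_v}$. Since $v$ is a cut vertex and $J_{G\setminus\{v\}}$ is unmixed, Proposition \ref{P.J_G-v_unmixed} gives $B=(x_v,y_v)+J_{G\setminus\{v\}}$ and $A+B=(x_v,y_v)+J_{G_v\setminus\{v\}}$. Writing $R'$ for the polynomial ring on the variables $x_i,y_i$ with $i\neq v$, one may identify $R/B$ with $R'/J_{G\setminus\{v\}}$ and $R/(A+B)$ with $R'/J_{G_v\setminus\{v\}}$, and since $(x_v,y_v)$ annihilates these modules their depth and dimension as $R$-modules equal those as $R'$-modules. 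This gives: $R/B$ Cohen--Macaulay of dimension $(n-1)+c(G\setminus\{v\})\geq n+1$ (as $c(G\setminus\{v\})\geq 2$, $v$ being a cut vertex of the connected graph $G$); $R/(A+B)$ Cohen--Macaulay of dimension $(n-1)+1=n$, because $G_v\setminus\{v\}$ is connected (in $G_v$ the set $N_G(v)$ becomes a clique meeting every component of $G\setminus\{v\}$, so deleting $v$ leaves a connected graph) and $J_{G_v\setminus\{v\}}$ is unmixed by Lemma \ref{L.Gv-vHasRCSP}; and $R/A=R/J_{G_v}$ Cohen--Macaulay of dimension $n+c(G_v)=n+1$, since $G_v$ is connected with $J_{G_v}$ unmixed.

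Feeding this into the depth lemma along \eqref{Eq.shortExactSequenceFinal} would yield
\[
\depth(R/J_G)\ \geq\ \min\bigl\{\depth(R/A\oplus R/B),\ \depth\bigl(R/(A+B)\bigr)+1\bigr\}\ =\ \min\{\,n+1,\ n+1\,\}\ =\ n+1,
\]
and since always $\depth(R/J_G)\leq\dim(R/J_G)=n+1$ this forces $\depth(R/J_G)=\dim(R/J_G)$, i.e.\ $J_G$ Cohen--Macaulay, closing the induction. The hard part will be keeping the recursion well-founded -- because $G_v$ has the same vertex set as $G$, one genuinely needs the observation that a cut vertex of a connected graph has a non-complete neighbourhood -- and getting the dimension bookkeeping right, the key point being that $\dim(R/(A+B))=\dim(R/J_G)-1$, which is exactly what makes the depth lemma return the sharp bound $\depth(R/J_G)\geq n+1$.
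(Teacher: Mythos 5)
Your proposal is correct and follows essentially the same route as the paper: the same decomposition $J_G=A\cap B$ with $A=J_{G_v}$, the same identifications $B=(x_v,y_v)+J_{G\setminus\{v\}}$ and $A+B=(x_v,y_v)+J_{G_v\setminus\{v\}}$ via Proposition \ref{P.J_G-v_unmixed}, and the Depth Lemma applied to \eqref{Eq.shortExactSequenceFinal}. The only (harmless) difference is the induction measure: you use the lexicographic pair (number of vertices, number of non-edges), justified by noting that a cut vertex of a connected graph has a non-complete neighbourhood, while the paper inducts on the number of vertices and then on the number of cut vertices, since $G_v$ has one cut vertex fewer.
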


\begin{proof}
By Remarks \ref{R.connected} and \ref{R.conncomp}, we may assume that $G$ is connected. We proceed by induction on the number $n$ of vertices of $G$. If $n=2$, $G$ is a single edge, hence $J_G$ is Cohen-Macaulay. Fix $n>2$. We now use induction on the number $k \geq 0$ of cut vertices of $G$. If $k=0$, then $G$ is a complete graph by definition of strong unmixedness and, thus, $J_G$ is Cohen-Macaulay by Remark \ref{R.NoCutSetsComplete}.

Suppose that $k \geq 1$. Since $J_G$ is strongly unmixed, there exists a cut vertex $v$ of $G$ such that $J_{G \setminus \{v\}}$ is strongly unmixed and in particular unmixed. We consider the decomposition $J_G = A \cap B$, where
\[
A = \bigcap_{\substack{S \in \mc C(G)\\ v \notin S}} P_S(G) \quad \text{ and } \quad B = \bigcap_{\substack{S \in \mc C(G)\\ v \in S}} P_S(G),
\]
and the short exact sequence \eqref{Eq.shortExactSequenceFinal}. By Lemma \ref{L.completingNeighborsVertex} (1), $A = J_{G_v}$ is the binomial edge ideal of the graph $G_v$ which has $n$ vertices and $k-1$ cut vertices. Moreover, $J_{G_v}$ is strongly unmixed. Therefore, by induction on $k$, $A$ is Cohen-Macaulay and $\depth(R/A) = \dim(R/A) = n+1$.

By Proposition \ref{P.J_G-v_unmixed}, $B = (x_v,y_v) + J_{G \setminus \{v\}}$, where $J_{G \setminus \{v\}}$ is strongly unmixed and the graph $G \setminus \{v\}$ has less than $n$ vertices. Let $H_1$ and $H_2$ be the connected components of $G \setminus \{v\}$. By Remark \ref{R.conncomp}, $J_{H_1}$ and $J_{H_2}$ are strongly unmixed and, thus, Cohen-Macaulay by induction on $n$. In particular, $J_{G \setminus \{v\}}$ and $B$ are Cohen-Macaulay and $\depth(R/B) = \dim(R/B) = |V(G \setminus \{v\})|+2 = n+1$.

Finally, by Proposition \ref{P.J_G-v_unmixed}, $A+B = (x_v,y_v) + J_{G_v \setminus \{v\}}$. Recall that $J_{G_v \setminus \{v\}}$ is strongly unmixed and the graph $G_v \setminus \{v\}$ has $n-1$ vertices. By induction on $n$, it follows that $J_{G_v \setminus \{v\}}$ and $A+B$ are Cohen-Macaulay. In particular, $\depth(R/(A+B)) = \dim(R/(A+B)) = |V(G_v \setminus \{v\})|+1 = n$.

The Depth Lemma \cite[Lemma 3.1.4]{V94} applied to the short exact sequence \eqref{Eq.shortExactSequenceFinal} yields $\depth(R/J_G)  = n+1 = \dim(R/J_G)$.
\end{proof}

\begin{remark}
Theorem \ref{T.StronglyUnmixedImpliesCM} gives a new way of proving that a binomial edge ideal is Cohen-Macaulay. For instance, it follows that the ideal $J_H$ of the graph $H$ in Examples \ref{E.stronglyUnmixed} \textbf{(b)} is Cohen-Macaulay.
\end{remark}

Theorems \ref{T.StronglyUnmixedImpliesCM} and \ref{T.CMimpliesRCSP} together imply that if $J_G$ is strongly unmixed, then $G$ is accessible. The next result will allow us to show that also the reverse implication holds for some particular classes of graphs. This implies that Conjecture \ref{C.mainConj} holds for these graphs.

\begin{proposition}\label{P.RCSPImpliesStronglyUnmixed}
Let $\mc G$ be a class of accessible graphs such that for every $G \in \mc G$ either the connected components of $G$ are complete graphs or there exists a cut vertex $v$ of $G$ for which $G \setminus \{v\}, G_v, G_v \setminus \{v\} \in \mc G$.
Then, $J_G$ is strongly unmixed for every $G \in \mc G$. In particular, $J_G$ is Cohen-Macaulay.
\end{proposition}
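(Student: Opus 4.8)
The plan is a short double induction that simply unwinds the recursive Definition \ref{D.StronglyUnmixed}. Fix $G \in \mc G$. Since $\mc G$ consists of accessible graphs, $J_G$ is unmixed, so throughout the argument the unmixedness hypothesis built into the notion of strong unmixedness comes for free; the only thing ever in question is the recursive clause. I would argue by induction on the number $n$ of vertices of $G$ and, for fixed $n$, by a secondary induction on the number $k$ of cut vertices of $G$ (equivalently, a well-founded induction on the pair $(n,k)$ in lexicographic order).

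For the base of the induction, if the connected components of $G$ are complete graphs then $J_G$ is strongly unmixed directly by Definition \ref{D.StronglyUnmixed}; this covers in particular the case $k=0$, because an accessible graph with no cut vertex has complete connected components by Remark \ref{R.NoCutVerticesComplete}. Otherwise, the hypothesis on $\mc G$ provides a cut vertex $v$ of $G$ with $G \setminus \{v\},\, G_v,\, G_v \setminus \{v\} \in \mc G$. The graphs $G \setminus \{v\}$ and $G_v \setminus \{v\}$ have fewer than $n$ vertices, so the primary inductive hypothesis gives that $J_{G \setminus \{v\}}$ and $J_{G_v \setminus \{v\}}$ are strongly unmixed. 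The graph $G_v$ still has $n$ vertices, but by Lemma \ref{L.completingNeighborsVertex} (1) its cut sets are exactly the cut sets of $G$ not containing $v$; in particular its cut vertices are precisely the cut vertices of $G$ other than $v$, so $G_v$ has $k-1$ cut vertices, and the secondary inductive hypothesis gives that $J_{G_v}$ is strongly unmixed. Since $J_G$ is unmixed and $v$ is a cut vertex for which $J_{G \setminus \{v\}}$, $J_{G_v}$ and $J_{G_v \setminus \{v\}}$ are all strongly unmixed, Definition \ref{D.StronglyUnmixed} shows that $J_G$ is strongly unmixed, completing the induction. The final ``in particular'' is then immediate from Theorem \ref{T.StronglyUnmixedImpliesCM}.

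I do not expect a genuine obstacle here; the only point that needs a little care is the bookkeeping of the well-founded order, namely checking that each of the three derived graphs is strictly smaller than $G$ with respect to $(n,k)$ — which is exactly what Lemma \ref{L.completingNeighborsVertex} (1) supplies for $G_v$, the one graph that does not lose a vertex. One should also keep in mind that accessibility of every member of $\mc G$ is used twice: once to guarantee $J_G$ (and $J_{G_v}$, etc.) is unmixed, as required by the recursive clause, and once in the base case via Remark \ref{R.NoCutVerticesComplete}. No connectivity assumption on $G$ is needed, since both Definition \ref{D.StronglyUnmixed} and the hypothesis on $\mc G$ are stated for arbitrary graphs.
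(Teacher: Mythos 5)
Your proof is correct and follows essentially the same route as the paper: a double induction on the number of vertices and the number of cut vertices, with the base case settled by Remark \ref{R.NoCutVerticesComplete}, the three derived graphs handled by the inductive hypotheses (using Lemma \ref{L.completingNeighborsVertex} (1) to see that $G_v$ has $k-1$ cut vertices), and the conclusion read off from Definition \ref{D.StronglyUnmixed} and Theorem \ref{T.StronglyUnmixedImpliesCM}. Your explicit remarks on where accessibility (hence unmixedness) enters are accurate but are implicit in the paper's argument as well.
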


\begin{proof} Let $G \in \mc G$. We proceed by induction on the number $n$ of vertices of $G$. If $n = 1$, there is nothing to prove. Fix $n \geq 2$ and let us proceed by induction on the number $k$ of cut vertices of $G$. If $G$ does not have cut vertices, then by Remark \ref{R.NoCutVerticesComplete} the connected components of $G$ are complete graphs and the claim follows. Let $k \geq 1$. Since $G \in \mc G$, there exists a cut vertex $v$ of $G$ such that $G \setminus \{v\}, G_v, G_v \setminus \{v\} \in \mc G$. Since $G \setminus \{v\}$ and $G_v \setminus \{v\}$ have $n-1$ vertices and $G_v$ has $n$ vertices and $k-1$ cut vertices, by induction it follows that $J_{G \setminus \{v\}}$, $J_{G_v \setminus \{v\}}$ and $J_{G_v}$ are strongly unmixed. Thus, $J_G$ is strongly unmixed by definition. The last part of the statement follows by Theorem \ref{T.StronglyUnmixedImpliesCM}.
\end{proof}

In order to use Proposition \ref{P.RCSPImpliesStronglyUnmixed}, in the next result we show that, if $G$ is accessible and $v$ is a cut vertex, the accessibility of $G \setminus \{v\}$ is equivalent to the unmixedness of $J_{G \setminus \{v\}}$.

\begin{proposition} \label{P.J_G-v_unmixed2}
Let $v$ be a cut vertex of a connected accessible graph $G$. The following statements are equivalent:
\begin{enumerate}
\item[$\mathrm{(1)}$] $J_{G \setminus \{v\}}$ is unmixed;
\item[$\mathrm{(2)}$] $\mathcal{C}(G \setminus \{v\})$ is an accessible set system.
\end{enumerate}
In particular, if one of the above conditions holds, then $G \setminus \{v\}$ is accessible.
\end{proposition}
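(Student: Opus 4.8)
The plan is to prove the two implications separately. Since $G$ is connected and accessible, $J_G$ is unmixed, so $c_G(\{v\})=2$ and $H:=G\setminus\{v\}$ has exactly two connected components $H_1,H_2$, with no edge of $G$ joining $V(H_1)$ to $V(H_2)$ except through $v$. I will freely use Lemma \ref{L.PropertiesVerticesOfCutSets}: for an unmixed graph and a cut set $S$, removing $s\in S$ leaves a cut set if and only if $s$ reconnects exactly two components of the complement.

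For $(2)\Rightarrow(1)$ I would argue by a two-sided estimate on $c_H(S)$ for $S\in\mc C(H)$. Accessibility of $\mc C(H)$ yields a chain $\emptyset=S_0\subsetneq\dots\subsetneq S_r=S$ inside $\mc C(H)$ with $|S_i|=i$; since each $S_i$ is a cut set, $c_H(S_{i-1})<c_H(S_i)$, whence $c_H(S)\geq c_H(\emptyset)+r=|S|+2$. For the reverse inequality, note $G\setminus(S\cup\{v\})=H\setminus S$, so $c_G(S\cup\{v\})=c_H(S)$; deleting from $S\cup\{v\}$, one at a time, vertices whose removal does not decrease $c_G$ produces a cut set $T\subseteq S\cup\{v\}$ of $G$ with $c_G(T)\geq c_G(S\cup\{v\})=c_H(S)$, and unmixedness of $J_G$ gives $c_H(S)\leq c_G(T)=|T|+1\leq|S|+2$. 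Hence $c_H(S)=|S|+2$ for every $S\in\mc C(H)$, i.e. $J_H$ is unmixed. (Notably this direction uses only that $J_G$ is unmixed.)

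For $(1)\Rightarrow(2)$ I would use Proposition \ref{P.J_G-v_unmixed}: since $J_G$ and $J_H$ are unmixed, $\mc C(H)=\{S\subseteq V(H):S\cup\{v\}\in\mc C(G)\}$ and moreover $N_{H_1}(v)\not\subseteq S$, $N_{H_2}(v)\not\subseteq S$ for every $S\in\mc C(H)$. Given a non-empty $S\in\mc C(H)$, we have $S\cup\{v\}\in\mc C(G)$, and accessibility of $G$ provides $t\in S\cup\{v\}$ with $(S\cup\{v\})\setminus\{t\}\in\mc C(G)$. If $t\in S$, then $(S\setminus\{t\})\cup\{v\}\in\mc C(G)$, so $S\setminus\{t\}\in\mc C(H)$ and we are done. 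The delicate case is $t=v$, i.e. $S\in\mc C(G)$: here I would suppose for contradiction that $S\setminus\{s\}\notin\mc C(H)$ for all $s\in S$, so by Lemma \ref{L.PropertiesVerticesOfCutSets} applied to $H$ every $s\in S$ reconnects at least three components of $H\setminus S$, while accessibility of $G$ gives some $s^\ast\in S$ reconnecting exactly two components of $G\setminus S$.

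The heart of the proof — and the step I expect to be the main obstacle — is the comparison of the component structures of $G\setminus S$ and $H\setminus S$ in this last case. Unmixedness forces $c_G(S)=|S|+1$ and $c_H(S)=|S|+2$, so deleting $v$ from the unique component $C_0$ of $G\setminus S$ containing $v$ splits $C_0$ into exactly two pieces $D_1\sqcup D_2$ and leaves all other components of $G\setminus S$ untouched. Since $D_1,D_2$ are connected subgraphs of $H=H_1\sqcup H_2$, and $v$ has a neighbour outside $S$ in each of $H_1,H_2$ (by the second condition above), one has, after relabelling, $D_1\subseteq V(H_1)$ and $D_2\subseteq V(H_2)$. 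Now the two components of $G\setminus S$ reconnected by $s^\ast$ are either both among the untouched ones — in which case $s^\ast$ reconnects exactly two components of $H\setminus S$ too, a contradiction — or one of them is $C_0$; in that case, for $s^\ast$ to reconnect three or more components of $H\setminus S$ it must be adjacent to $D_1$, to $D_2$, and to the other reconnected component, hence to vertices of both $V(H_1)$ and $V(H_2)$. As $s^\ast\neq v$ and no edge of $G$ except those incident to $v$ joins $V(H_1)$ to $V(H_2)$, this is impossible. This contradiction shows $\mc C(H)$ is an accessible set system, finishing $(1)\Rightarrow(2)$; the final assertion is then immediate, since if either condition holds then both do, so $J_{G\setminus\{v\}}$ is unmixed and $\mc C(G\setminus\{v\})$ is accessible, i.e. $G\setminus\{v\}$ is accessible.
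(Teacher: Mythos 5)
Your proposal is correct, but it follows a genuinely different route from the paper in both directions. For $(1)\Rightarrow(2)$ the paper also starts from Proposition \ref{P.J_G-v_unmixed} and the identity $\mc C(G\setminus\{v\})=\{S : S\cup\{v\}\in\mc C(G)\}$, but it guarantees that the removable element of $S\cup\{v\}$ can be taken different from $v$ by invoking the structural results of Section 4: Lemma \ref{L.cutSetsConsistingOfCutVertices} when $S$ consists only of cut vertices of $G$, and Proposition \ref{P.removingNonCutVertex2} when it does not. You instead apply plain accessibility of $G$ to $S\cup\{v\}$ and confront the bad case $t=v$ directly, comparing the components of $G\setminus S$ and of $(G\setminus\{v\})\setminus S$: the count $c_G(S)=|S|+1$, $c_{G\setminus\{v\}}(S)=|S|+2$ forces the component of $v$ to split into exactly two pieces $D_1\subseteq V(H_1)$, $D_2\subseteq V(H_2)$ (here Proposition \ref{P.J_G-v_unmixed}(2) is what places one piece on each side), and then Lemma \ref{L.PropertiesVerticesOfCutSets} together with the fact that only $v$ joins $V(H_1)$ to $V(H_2)$ yields the contradiction — all checks go through. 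For $(2)\Rightarrow(1)$ the paper argues by contradiction via Proposition \ref{P.J_G-v_unmixed}(2), producing a cut set of $G\setminus\{v\}$ containing $N_{H_1}(v)$ and stripping it down along the accessible structure until a numerical contradiction with the unmixedness of $J_G$ appears; your two-sided estimate $|S|+2\le c_{G\setminus\{v\}}(S)\le|S|+2$ (lower bound from the accessible chain, upper bound from shrinking $S\cup\{v\}$ to a cut set of $G$ and unmixedness of $J_G$) is a direct and slightly more transparent argument, and, as you note, it only uses unmixedness of $J_G$ rather than full accessibility. What each approach buys: yours is more self-contained and elementary, avoiding the heavier machinery of Lemma \ref{L.cutSetsConsistingOfCutVertices} and Proposition \ref{P.removingNonCutVertex2}; the paper's reuses those structural lemmas, which it has developed anyway and needs elsewhere, so its proof of this proposition is shorter given that context.
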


\begin{proof}
Let $H=G \setminus \{v\}$ and let $H_1$ and $H_2$ be the connected components of $H$. Assume that $J_{H}$ be unmixed. By Proposition \ref{P.J_G-v_unmixed}, we have
\[
\mc C(H)=\{S \subseteq V(H) : S \cup \{v\} \in \mc C(G)\}.
\]
Let $S \in \mc C(H)$. If all the elements of $S$ are cut vertices of $G$, then the same is true for $S \cup \{v\}$. Since $J_G$ is unmixed, by Lemma \ref{L.cutSetsConsistingOfCutVertices}, $(S \setminus \{s\}) \cup \{v\} \in \mc C(G)$ for every $s \in S$. On the other hand, if $S$ contains a non-cut vertex $s$, by Proposition \ref{P.removingNonCutVertex2} we have that $(S \setminus \{s\}) \cup \{v\} \in \mc C(G)$.

Conversely, assume by contradiction that $J_H$ is not unmixed. By Proposition \ref{P.J_G-v_unmixed}, we may assume that there exists $T \in \mc C(H)$ such that $N_{H_1}(v) \subseteq T$.
We notice that $T'=T \cap V(H_1)$ is a cut set of $H$ by Remark \ref{R.connected} (1). By assumption, there exists $t_1 \in T'$ such that $T' \setminus \{t_1\} \in \mc C(H)$. If $t_1 \in N_{H_1}(v)$, then set $U=T'$; otherwise by assumption there exists $t_2 \in T' \setminus \{t_1\}$ such that $T' \setminus \{t_1,t_2\} \in \mc C(H)$. If $t_2 \in N_{H_1}(v)$, then set $U=T' \setminus \{t_1\}$; otherwise we keep removing elements from $T'$ until we find $t_r \in N_{H_1}(v)$ such that $T' \setminus \{t_1,\dots,t_r\} \in \mc C(H)$ and we set $U=T' \setminus \{t_1,\dots,t_{r-1}\}$, where $t_1,\dots,t_{r-1} \notin N_{H_1}(v)$. Moreover, since $U \in \mc C(H)$, for every $u \in U$ we have
\[
c_G(U\setminus\{u\}) \leq c_H(U\setminus\{u\}) < c_H(U) = c_{G}(U)
\]
because $N_{H_1}(v) \subseteq U \subseteq V(H_1)$. Hence, $U$ is also a cut set of $G$. Furthermore, for every $u \in U \setminus \{t_r\}$ we have
\[
c_G((U\cup \{v\})\setminus\{t_r,u\}) = c_H(U\setminus\{t_r,u\}) < c_H(U \setminus \{t_r\}) = c_{G}((U\cup\{v\})\setminus \{t_r\})
\]
and  $c_G(U \setminus \{t_r\})<c_G((U \cup \{v\}) \setminus \{t_r\})$ since $v$ is adjacent to $t_r$ and to some vertex of $H_2$. It follows that $(U \cup \{v\})\setminus \{t_r\}$ is a cut set of $G$. Finally, since $U$ and $(U \cup \{v\})\setminus \{t_r\}$ are cut sets of $G$, $J_G$ is unmixed, and $N_{H_1}(v) \subseteq U \subseteq V(H_1)$, we have
\[
|U|+1=c_G((U \cup \{v\})\setminus \{t_r\})=c_H(U \setminus \{t_r\}) < c_H(U)=c_G(U)=|U|+1,
\]
which yields a contradiction.
\end{proof}

\begin{example}
In general, it is possible that the equivalent conditions of Proposition \ref{P.J_G-v_unmixed2} hold, but $G$ is not accessible. For instance, if $G$ is the graph in Figure \ref{F.A6}, then $G \setminus \{6\}$ satisfies both conditions of Proposition \ref{P.J_G-v_unmixed2}, but $G$ is not accessible. In fact, $\{3,4\} \in \mc C(G)$, but neither $3$ nor $4$ are cut vertices of $G$.
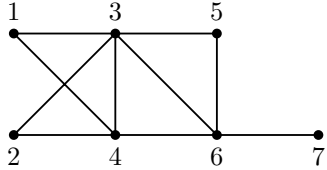
\begin{figure}[ht!]
\centering
\begin{tikzpicture}[scale=0.9]
\node[label={below:{\small $2$}}] (a) at (0,0) {};
\node[label={above:{\small $1$}}] (b) at (0,1.5) {};
\node[label={below:{\small $4$}}] (c) at (1.5,0) {};
\node[label={above:{\small $3$}}] (d) at (1.5,1.5) {};
\node[label={below:{\small $6$}}] (e) at (3,0) {};
\node[label={above:{\small $5$}}] (f) at (3,1.5) {};
\node[label={below:{\small $7$}}] (g) at (4.5,0) {};
\draw (4.5,0) -- (3,0) -- (3,1.5) -- (1.5,1.5) -- (3,0) -- (1.5,0) -- (1.5,1.5) -- (0,1.5) -- (1.5,0) -- (0,0) -- (1.5,1.5);
\end{tikzpicture}
\caption{A graph $G$ such that $J_{G_6}$ is not strongly unmixed} \label{F.A6}
\end{figure}
\end{example}

\begin{corollary}\label{C.rcsp2}
Let $G$ be an accessible graph and $v$ be a cut vertex of $G$ such that $J_{G \setminus \{v\}}$ is unmixed. Then, $G \setminus \{v\}$, $G_v$, and $G_v \setminus \{v\}$ are accessible.
\end{corollary}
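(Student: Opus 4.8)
The plan is to derive each of the three accessibility statements from a result already established in this section, after the usual reduction to the connected case. If $G$ has connected components $G_1, \dots, G_c$ with $v \in V(G_1)$, then Remark \ref{R.connected} gives that each $G_i$ is accessible, that $v$ is still a cut vertex of $G_1$, and that $J_{G_1 \setminus \{v\}}$ is unmixed; moreover completing the neighbourhood of $v$ or deleting $v$ only modifies the component $G_1$, so $G \setminus \{v\}$, $G_v$ and $G_v \setminus \{v\}$ are, respectively, $(G_1 \setminus \{v\}) \sqcup G_2 \sqcup \cdots \sqcup G_c$, $(G_1)_v \sqcup G_2 \sqcup \cdots \sqcup G_c$ and $((G_1)_v \setminus \{v\}) \sqcup G_2 \sqcup \cdots \sqcup G_c$. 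By Remark \ref{R.connected} (iii) it therefore suffices to prove the statement when $G$ is connected, which I assume from now on.

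For $G \setminus \{v\}$ the claim is immediate: condition $(1)$ of Proposition \ref{P.J_G-v_unmixed2} holds by hypothesis, so the concluding clause of that proposition gives that $G \setminus \{v\}$ is accessible. For $G_v$, accessibility is precisely Lemma \ref{L.completingNeighborsVertex} (3), using that $G$ is accessible.

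It remains to treat $G_v \setminus \{v\}$. Since $G$ is accessible, $J_G$ is unmixed, and $J_{G \setminus \{v\}}$ is unmixed by hypothesis, so Lemma \ref{L.Gv-vHasRCSP} applies and yields both that $J_{G_v \setminus \{v\}}$ is unmixed and that
\[
\mc C(G_v \setminus \{v\}) = \{ S \in \mc C(G_v) : N_G(v) \not\subseteq S \}.
\]
It then remains to check accessibility of this set system. Let $S \in \mc C(G_v \setminus \{v\})$ be non-empty; then $S \in \mc C(G_v)$ and $N_G(v) \not\subseteq S$. As $G_v$ is accessible (by the previous paragraph), there is $s \in S$ with $S \setminus \{s\} \in \mc C(G_v)$, and since $S \setminus \{s\} \subseteq S$ we still have $N_G(v) \not\subseteq S \setminus \{s\}$, whence $S \setminus \{s\} \in \mc C(G_v \setminus \{v\})$. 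Thus $G_v \setminus \{v\}$ is accessible, completing the proof. I do not anticipate a serious obstacle here: the argument is essentially bookkeeping on top of Proposition \ref{P.J_G-v_unmixed2}, Lemma \ref{L.completingNeighborsVertex} and Lemma \ref{L.Gv-vHasRCSP}, and the only delicate point is verifying the hypotheses of those statements, in particular that \emph{both} $J_G$ and $J_{G \setminus \{v\}}$ are unmixed, which is exactly where the assumption on the cut vertex $v$ enters.
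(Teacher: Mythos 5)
Your proposal is correct and follows essentially the same route as the paper: $G\setminus\{v\}$ via Proposition \ref{P.J_G-v_unmixed2}, $G_v$ via Lemma \ref{L.completingNeighborsVertex} (3), and $G_v\setminus\{v\}$ via the cut-set description in Lemma \ref{L.Gv-vHasRCSP} combined with accessibility of $G_v$. The only difference is your explicit reduction to the connected case, which the paper leaves tacit even though the cited results are stated for connected graphs; this is a harmless (indeed welcome) bit of added care.
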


\begin{proof}
The graphs $G_v$ and $G \setminus \{v\}$ is accessible by Lemma \ref{L.completingNeighborsVertex} (3) and Proposition \ref{P.J_G-v_unmixed2}.

By Lemma \ref{L.Gv-vHasRCSP}, $J_{G_v \setminus \{v\}}$ is unmixed and $$\mc C(G_v \setminus \{v\})=\mc C(G_v) \setminus \{S \in \mc C(G_v) : N_G(v) \subseteq S\}.$$ Let $S \in \mc C(G_v \setminus \{v\}) \subseteq \mc C(G_v)$. Then, there exists $s \in S$ such that $S \setminus \{s\} \in \mc C(G_v)$. Clearly, $N_G(v) \not \subseteq S \setminus \{s\}$, since $S \in \mc C(G_v \setminus \{v\})$ and, hence, $S \setminus \{s\} \in \mc C(G_v \setminus \{v\})$.
\end{proof}

As a consequence of Proposition \ref{P.RCSPImpliesStronglyUnmixed} and the previous corollary, if we set $\mc G$ to be the class of all accessible graphs, an affirmative answer to the following question would completely settle Conjecture \ref{C.mainConj}.

\begin{question}\label{Q.GoodCutVertex}
If $G$ is a connected non-complete accessible graph, does there exist a cut vertex $v$ of $G$ such that $J_{G \setminus \{v\}}$ is unmixed?
\end{question}

In the next section, we are going to prove that the answer is positive for chordal and traceable graphs.

\section{Cohen-Macaulayness of chordal and traceable graphs}
\label{S.Chordal}

The main goal of this section is to prove that for every chordal or traceable graphs $G$, being accessible is equivalent to both Cohen-Macaulayness and strong unmixedness of $J_G$.

We start by recalling the notion of block graph. A connected subgraph of $G$ that cannot be disconnected by removing a vertex and is maximal with respect to this property is called a \textit{block} of $G$. The \textit{block graph} of $G$, denoted by $\mc B(G)$, is a graph whose vertices are the blocks of $G$ and such that there is an edge between two vertices if and only if the corresponding blocks contain a common cut vertex of $G$. In \cite[Proposition 1.3]{R19}, Rinaldo proves that the block graph of any connected graph $G$ with $J_G$ unmixed is a tree.

To answer Question \ref{Q.GoodCutVertex}, the next result allows us to focus only on one block instead of the whole graph.

\begin{proposition}\label{P.goodCutVertexUnmixed}
Let $G$ be a connected graph with $J_G$ unmixed and suppose that for any block $B$ of $G$ there exists a cut vertex $v_B$ of $G$ in $V(B)$ such that there are no cut sets of $G\setminus \{v_B\}$ containing $N_B(v_B)$. Then, there exists a cut vertex $v$ of $G$ such that $J_{G \setminus \{v\}}$ is unmixed.
\end{proposition}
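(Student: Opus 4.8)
The plan is to translate the hypothesis into the language of Proposition \ref{P.J_G-v_unmixed}, which tells us that for a cut vertex $v$ of $G$, the ideal $J_{G\setminus\{v\}}$ is unmixed if and only if no cut set of $G\setminus\{v\}$ contains $N_{H_1}(v)$ or $N_{H_2}(v)$, where $H_1,H_2$ are the two connected components of $G\setminus\{v\}$. Thus it suffices to find a cut vertex $v$ such that neither neighbourhood $N_{H_i}(v)$ is contained in any cut set of $G\setminus\{v\}$. The hypothesis already hands us, for \emph{every} block $B$, a cut vertex $v_B\in V(B)$ with the property that no cut set of $G\setminus\{v_B\}$ contains $N_B(v_B)$; the issue is that $N_B(v_B)$ is only the part of $v_B$'s neighbourhood lying inside the single block $B$, whereas $N_{H_i}(v_B)$ may be larger, spreading into several blocks. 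So the work is to choose the block $B$ cleverly so that the guaranteed cut vertex $v=v_B$ genuinely controls \emph{both} sides of $G\setminus\{v\}$.

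**Key steps.** First I would invoke Rinaldo's result that the block graph $\mc B(G)$ is a tree (since $J_G$ is unmixed), and consider a leaf block $B$ of this tree, or more precisely argue by an extremal/leaf choice. A leaf block $B$ contains exactly one cut vertex of $G$, call it $c$, through which $B$ attaches to the rest of the graph; every other vertex of $B$ has all its neighbours inside $B$. The hypothesis gives a cut vertex $v_B\in V(B)$ with no cut set of $G\setminus\{v_B\}$ containing $N_B(v_B)$ — but in a leaf block the only cut vertex of $G$ lying in $B$ is $c$ itself, so $v_B=c$ and the guarantee reads: no cut set of $G\setminus\{c\}$ contains $N_B(c)$. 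Set $v=c$. Now $G\setminus\{v\}$ splits into two components $H_1,H_2$; say $H_1$ is the component containing $V(B)\setminus\{v\}$. Because $B$ is a leaf block attaching to $G$ only at $v$, the neighbours of $v$ lying in $H_1$ are exactly $N_B(v)$, i.e.\ $N_{H_1}(v)=N_B(v)$. The hypothesis then directly says that no cut set of $G\setminus\{v\}$ contains $N_{H_1}(v)$. It remains to handle the other side $H_2$: I would show that no cut set of $G\setminus\{v\}$ can contain $N_{H_2}(v)$ either — here one uses that $v$ is a cut vertex of $G$ and $J_G$ is unmixed, so $v$ reconnects $H_1$ and $H_2$; if some cut set $S$ of $G\setminus\{v\}$ contained $N_{H_2}(v)$, then by Remark \ref{R.NeighborsCutSet} (applied with the component $H_2$) $N_{H_2}(v)$ would itself be a cut set of $G$ contained in $V(H_2)$, and comparing $c_G(N_{H_2}(v))$ with the unmixedness relation $c_G=|{\cdot}|+1$ — much as in the $(1)\Rightarrow(2)$ argument of Proposition \ref{P.J_G-v_unmixed} — would force a contradiction, because $N_{H_2}(v)\cup\{v\}$ would then also be a cut set of $G$ with one more component yet only one more vertex, violating unmixedness at one end or the other. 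Having ruled out both inclusions, Proposition \ref{P.J_G-v_unmixed} (the equivalence $(1)\Leftrightarrow(2)$) yields that $J_{G\setminus\{v\}}$ is unmixed, completing the proof.

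**Main obstacle.** The delicate point is the reduction to a leaf block and the verification that for that block the ``block neighbourhood'' $N_B(v_B)$ coincides with the full component neighbourhood $N_{H_i}(v_B)$ on one side — one must be careful that the cut vertex supplied by the hypothesis for a leaf block really is forced to be the unique $G$-cut vertex of that block, and that completing the neighbourhood argument for the \emph{other} component $H_2$ does not secretly require an additional hypothesis. I expect the $H_2$-side bound to be the technical heart: it is exactly the kind of counting-with-$c_G$ argument used repeatedly in Section 5 (compare Proposition \ref{P.J_G-v_unmixed} and Remark \ref{R.NeighborsCutSet}), and the key is that $v$ being a genuine cut vertex of the unmixed graph $G$ makes $N_{H_2}(v)$ ``too small'' to be a cut set that could also accommodate being extended by $v$. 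If a single leaf block does not suffice in full generality, the fallback is to induct on the number of blocks, peeling off a leaf block and using the hypothesis restricted to $G\setminus(V(B)\setminus\{v\})$, but I anticipate the leaf-block argument above to go through directly.
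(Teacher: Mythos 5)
Your reduction to a leaf block is fine as far as it goes: since $J_G$ is unmixed, every cut vertex $v$ lies in exactly two blocks, $G\setminus\{v\}$ has exactly two components, and $N_{H_1}(v)$, $N_{H_2}(v)$ are exactly the two block-neighbourhoods of $v$; moreover the unique cut vertex $c$ of a leaf block $B$ must be the vertex $v_B$ supplied by the hypothesis, so the $B$-side of $c$ is controlled. The genuine gap is the ``other side'' step: your claim that no cut set of $G\setminus\{c\}$ can contain $N_{H_2}(c)$, by a counting argument using only the unmixedness of $J_G$ and Remark \ref{R.NeighborsCutSet}, is false. A concrete counterexample is the graph of Figure \ref{F.G92StronglyUnmixed} (Example \ref{E.(G92_ragno)-vNotUnmixed}): $J_G$ is unmixed, the edge $\{6,7\}$ is a leaf block whose unique cut vertex is $7$, and no cut set of $G\setminus\{7\}$ contains $N_B(7)=\{6\}$ (the vertex $6$ is isolated in $G\setminus\{7\}$), yet $J_{G\setminus\{7\}}$ is \emph{not} unmixed, because $\{3,4,8\}\in\mc C(G\setminus\{7\})$ contains $N_{H_2}(7)=\{4,8\}$. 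Note that in this example $\{4,8\}$ is a cut set of $G$ with $c_G(\{4,8\})=3$ and $\{4,7,8\}$ is simply not a cut set of $G$, so the contradiction you hope to extract from unmixedness never appears. If your argument were correct, the hypothesis of Proposition \ref{P.goodCutVertexUnmixed} would only be needed for a single leaf block, which the example shows cannot be the case.

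The paper's proof uses the hypothesis for \emph{all} blocks and propagates it along the block tree (a tree by \cite[Proposition 1.3]{R19}; each cut vertex lies in exactly two blocks by unmixedness). Start at a leaf block $B_1$ with its unique cut vertex $v_1=v_{B_1}$, whose $B_1$-side is controlled; pass to the other block $B_2$ containing $v_1$ and take the hypothesis vertex $v_2=v_{B_2}$. If $v_2=v_1$, both block-neighbourhoods of $v_1$ avoid all cut sets of $G\setminus\{v_1\}$ and Proposition \ref{P.J_G-v_unmixed} gives unmixedness of $J_{G\setminus\{v_1\}}$; otherwise continue into the other block of $v_2$, and so on. Since the block graph is a finite tree, the walk either stops because two consecutive hypothesis vertices coincide, or it reaches another leaf block $B_p$, whose unique cut vertex is the vertex $v_{p-1}$ through which you entered; then $v_{B_p}=v_{p-1}$ and both sides of $v_{p-1}$ are controlled. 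This tree walk is the missing idea; your fallback suggestion of peeling off a leaf block and inducting would also need unmixedness and the block hypothesis for the smaller graph, neither of which is available.
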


\begin{proof}
Recall that $\mc B(G)$ is a tree by \cite[Proposition 1.3]{R19}, and every cut vertex of $G$ belongs to exactly two blocks of $G$ because $J_G$ is unmixed. Let $B_1$ be a block corresponding to a leaf of $\mc B(G)$. Therefore, there is a unique cut vertex $v_1$ of $G$ in $V(B_1)$. By assumption, there are no cut sets of $G\setminus \{v_1\}$ containing $N_{B_1}(v_1)$. Let $B_2$ be the other block of $G$ containing $v_1$. Again by assumption, there is a cut vertex $v_2 \in V(B_2)$ such that $N_{B_2}(v_2)$ is not contained in any cut sets of $G\setminus \{v_2\}$. If $v_1=v_2$, then $J_{G\setminus \{v_1\}}$ is unmixed by Proposition \ref{P.J_G-v_unmixed}. Otherwise, we consider the other block $B_3$ containing $v_2$ and the cut vertex $v_3$ given by the assumption. We can continue in this way and, if we do not find any cut vertex $v_i$ for which $J_{G\setminus \{v_i\}}$ is unmixed, after finitely many steps we reach a block $B_p$ that is a leaf in the block graph, because $\mc B(G)$ does not contain cycles. Hence, there is a unique cut vertex of $G$ in $V(B_p)$, which is $v_{p-1}$, the only element of $V(B_{p-1}) \cap V(B_p)$. Thus, $J_{G\setminus \{v_{p-1}\}}$ is unmixed.
\end{proof}

To prove that chordal accessible graphs satisfy the condition of Proposition \ref{P.goodCutVertexUnmixed} we first need a technical result.

\begin{lemma} \label{L.NeighborsAdjCutVertices}
Let $G$ be a connected graph such that $J_G$ is unmixed and let $B$ be a block of $G$. Let $v_1$ and $v_2$ be two cut vertices of $G$ belonging to $B$.
\begin{itemize}
\item[{\rm (1)}] If $v_1$ and $v_2$ are adjacent and there exists a cut set $S_i \in \mc C(G \setminus \{v_i\})$ such that $N_B(v_i) \subseteq S_i$ for every $i=1,2$, then $N_B(v_1) \nsubseteq N_B[v_2]$ and $N_B(v_2) \nsubseteq N_B[v_1]$.
\item[{\rm (2)}] If $N_B[v_1] = V(B)$, then there exists a cut vertex $w \in B$ of $G$ such that $N_B(w) \nsubseteq S$ for every $S \in \mc C(G \setminus \{w\})$.
\end{itemize}
\end{lemma}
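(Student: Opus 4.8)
The statement has two parts, both concerning a block $B$ of a connected graph $G$ with $J_G$ unmixed and two cut vertices $v_1, v_2 \in V(B)$. I would prove (1) and (2) separately, since they have rather different flavors. For (1), the plan is to argue by contradiction: assume $N_B(v_1) \subseteq N_B[v_2]$. The key observation to exploit is that $N_B(v_i) \subseteq S_i$ for some cut set $S_i$ of $G \setminus \{v_i\}$, which by Remark \ref{R.NeighborsCutSet} means $N_B(v_i)$ is itself a cut set of $G$ separating $v_i$ (inside $B$) from the rest of $B$. I would use this to locate the connected components that $v_1$ and $v_2$ reconnect when added back, and then derive that $v_1$ (or $v_2$) fails to be a genuine cut vertex of $G$, or that unmixedness is violated via a count of components à la Lemma \ref{L.PropertiesVerticesOfCutSets}. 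Concretely, if $N_B(v_1) \subseteq N_B[v_2]$, then every neighbour of $v_1$ in $B$ is either $v_2$ or a neighbour of $v_2$; combined with $v_1 \sim v_2$, this should force $v_1$ into a position where removing it cannot disconnect anything inside $B$, contradicting that $v_1$ is a cut vertex — but one must be careful, because $v_1$ could be a cut vertex of $G$ through edges leaving $B$. This is where the hypothesis $N_B(v_1) \subseteq S_1$ for a cut set of $G \setminus \{v_1\}$ is essential: it pins down that the disconnection caused by $v_1$ is "felt" inside $B$ as well, and that is the contradiction I would aim to extract.

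**The second part.** For (2), suppose $N_B[v_1] = V(B)$, i.e. $v_1$ is adjacent to every other vertex of $B$. The goal is to find \emph{some} cut vertex $w$ in $B$ whose block-neighbourhood $N_B(w)$ is contained in no cut set of $G \setminus \{w\}$. The natural candidate is $w = v_1$ itself: since $N_B(v_1) = V(B) \setminus \{v_1\}$, a cut set $S$ of $G \setminus \{v_1\}$ containing $N_B(v_1)$ would have to contain all of $V(B) \setminus \{v_1\}$, which is a very large and rigid requirement. I would show this is impossible by a counting argument: by Remark \ref{R.NeighborsCutSet}, such an $S$ would make $N_B(v_1)$ a cut set of $G$; but then in $G \setminus N_B(v_1)$ the vertex $v_1$ is isolated from the rest of the graph (everything in $B$ other than $v_1$ has been removed, and $v_1$'s only route out of $B$ is via... wait, $v_1$ might have neighbours outside $B$). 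Actually I need to be more careful: $v_1$ being a cut vertex of $G$ means $v_1$ lies in at least two blocks, so $v_1$ has neighbours outside $B$. The argument should then be that $N_B(v_1) = V(B)\setminus\{v_1\}$ cannot be a cut set because removing it leaves $v_1$ still attached (through its other block) to the rest of $G$, while the vertices of $V(B)\setminus\{v_1\}$ all got deleted — so no vertex of $N_B(v_1)$ can "reconnect two components", forcing a violation of the cut-set condition or of unmixedness. If $v_1$ itself does not work, I would fall back on part (1) or on the structure of cut vertices in $B$ to produce another $w$.

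**Main obstacle.** The hard part will be part (1): the interplay between "$v_i$ is a cut vertex of $G$" (a global condition that may be witnessed entirely by edges outside $B$) and "$N_B(v_i)$ is contained in a cut set of $G\setminus\{v_i\}$" (which constrains the structure \emph{inside} $B$). I expect the proof to require carefully choosing, for each $i$, the cut set $S_i$ and then passing to $S_i \cap V(B)$ or to $N_B(v_i)$ itself (via Remark \ref{R.NeighborsCutSet}), and tracking exactly which connected components of $G \setminus N_B(v_i)$ contain $v_i$, contain $v_2$, and contain material outside $B$. The unmixedness hypothesis, through the equalities $c_G(S) = |S| + c$, will be the quantitative tool that converts a combinatorial configuration into a numerical contradiction, exactly as in the proof of Lemma \ref{L.cutSetsConsistingOfCutVertices}. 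I would expect part (2) to reduce, after the initial observation that $w=v_1$ is forced, to essentially the same kind of counting argument but with a degenerate (all-but-one-vertex) cut set, so once part (1) is set up correctly, part (2) should follow with modest extra work or by a direct application of part (1) to a suitable pair of cut vertices in $B$.
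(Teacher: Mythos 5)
Your proposal for part (1) never gets past announcing a strategy, and the one concrete idea it offers cannot work. You suggest that $N_B(v_1)\subseteq N_B[v_2]$ should ``force $v_1$ into a position where removing it cannot disconnect anything inside $B$, contradicting that $v_1$ is a cut vertex''; but removing a single vertex never disconnects a block (a block on at least three vertices is $2$-connected), so a cut vertex of $G$ lying in $B$ always disconnects $G$ through its other block, and no contradiction of that kind is available. You sense this and retreat to ``or unmixedness is violated via a count of components'', yet you never exhibit a cut set whose component count is wrong --- and exhibiting one is the entire content of the proof. The paper's argument is a specific gluing construction: by unmixedness $G\setminus\{v_1\}$ has exactly two components $C$ and $C_B$ with $B\setminus\{v_1\}\subseteq C_B$; since $v_1\in N_B(v_2)\subseteq S_2$ and $N_B(v_1)\subseteq N_B[v_2]\subseteq S_2\cup\{v_2\}$, the vertex $v_1$ reconnects at least two components of $C\setminus S_2$; then $T_B=S_1\cap V(C_B)$ is a cut set of $C_B$ (hence of $G$, since it contains $N_B(v_1)$), $W=S_2\cap(V(C)\cup\{v_1\})$ is a cut set of $G[V(C)\cup\{v_1\}]$ containing $v_1$, and $T_B\sqcup W\in\mc C(G)$ satisfies $c_G(T_B\sqcup W)=(c_G(T_B)-1)+(c_G(W)-1)=|T_B|+|W|$, contradicting unmixedness. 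Neither this construction nor any substitute for it appears in your sketch, so part (1) is simply not proved.

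For part (2), your main claim --- that $w=v_1$ is forced to be the witness because no vertex of $N_B(v_1)=V(B)\setminus\{v_1\}$ can reconnect two components --- is false in general: if $B$ contains another cut vertex $u$ of $G$, then $u$ has neighbours in its second block and, after deleting $v_1$ and a cut set $S\supseteq V(B)\setminus\{v_1\}$, it may reconnect components lying entirely outside $B$. The isolation argument works only when $v_1$ is the unique cut vertex of $G$ in $B$, since then every $x\in N_B(v_1)$ has all its neighbours inside $V(B)\subseteq S\cup\{v_1\}$. In the remaining case the paper does not insist on $v_1$: assuming both $v_1$ and the other cut vertex $w$ fail, it notes that $w$ is adjacent to $v_1$ and $N_B(w)\subseteq V(B)=N_B[v_1]$, contradicting part (1); in particular the witness produced may be $w$ rather than $v_1$. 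Your closing remark about ``falling back on part (1)'' points in this direction but is not carried out, and since your part (1) is itself unestablished, the proposal has a genuine gap.
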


\begin{proof}
{\rm (1)} Assume by contradiction that $N_B(v_1) \subseteq N_B[v_2]$. Let $C$ and $C_B$ be the connected components of $G \setminus \{v_1\}$, where $C_B$ contains $B \setminus \{v_1\}$. Since $N_B(v_1) \subseteq N_B[v_2] \subseteq S_2 \cup \{v_2\}$ and $v_1 \in S_2$, it follows that $v_1$ reconnects at least two connected components of $C \setminus S_2$.

Set $T_B = S_1 \cap C_B$, which is a cut set of $C_B$ and also of $G$ because it contains $N_B(v_1)$. On the other hand, the set $W = S_2 \cap (V(C) \cup \{v_1\})$ is a cut set of $G[V(C) \cup \{v_1\}]$ by Remark \ref{R.connected}: indeed, every $w \in W \setminus \{v_1\}$ reconnects at least two connected components of $G[V(C) \cup \{v_1\}] \setminus S_2$ because $v_1 \in S_2$; this is also true for $v_1$ as explained in the beginning. In particular, $W$ is a cut set of $G$.

Now, we notice that $T_B \sqcup W \in \mc C(G)$ because $T_B$ is a cut set of $C_B$ and $W$ is a cut set of $G[V(C) \cup \{v_1\}]$ containing $v_1$. Moreover,
\[
c_G(T_B \sqcup W) = (c_G(T_B)-1) + (c_G(W)-1) = |T_B| + |W| = |T_B \sqcup W|,
\]
where the second equality holds because $J_G$ is unmixed. Thus, the equality $c_G(T_B \sqcup W) = |T_B \sqcup W|$ contradicts the unmixedness of $J_G$. \\
{\rm (2)} We first suppose that $v_1$ is the only cut vertex of $G$ in $V(B)$. Then, $N_B(v_1) = V(B) \setminus \{v_1\}$ and $N_B(v_1) \nsubseteq S$ for every $S \in \mc C(G \setminus \{v_1\})$, otherwise every element of $N_B(v_1)$ would not be adjacent to any vertex of $G \setminus (\{v_1\} \cup S)$. Suppose now that there is another cut vertex $w \in V(B)$ of $G$, $w \neq v_1$. Then, by assumption $N_B(w) \subseteq N_B[v_1] = V(B)$ and this contradicts {\rm (1)}.
\end{proof}

Recall that a graph $G$ is called \textit{chordal} if all its induced cycles have length three.

\begin{proposition} \label{P.GoodCutVertexChordal}
Let $G$ be a non-complete connected chordal accessible graph. Then, there exists a cut vertex $v$ of $G$ such that $J_{G\setminus \{v\}}$ is unmixed.
\end{proposition}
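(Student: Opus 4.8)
The strategy is to verify the hypothesis of Proposition~\ref{P.goodCutVertexUnmixed}: I want to show that for \emph{every} block $B$ of $G$ there is a cut vertex $v_B$ of $G$ lying in $V(B)$ such that no cut set of $G\setminus\{v_B\}$ contains $N_B(v_B)$. Since $G$ is chordal and connected, every block is a complete graph (a maximal $2$-connected chordal graph is a clique). Fix a block $B$. If $B$ has a unique cut vertex of $G$, then by Lemma~\ref{L.NeighborsAdjCutVertices}(2) that vertex already works, because in this case $N_B[v]=V(B)$. So the interesting case is when $B$ contains at least two cut vertices of $G$.

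In that case, suppose for contradiction that for \emph{every} cut vertex $v$ of $G$ in $V(B)$ there is a cut set $S_v\in\mc C(G\setminus\{v\})$ with $N_B(v)\subseteq S_v$. Because $B$ is complete, any two cut vertices $v_1,v_2\in V(B)$ are adjacent, so Lemma~\ref{L.NeighborsAdjCutVertices}(1) applies and gives $N_B(v_1)\nsubseteq N_B[v_2]$ and $N_B(v_2)\nsubseteq N_B[v_1]$. But in a complete block $N_B[v]=V(B)$ for \emph{any} vertex $v\in V(B)$, so $N_B(v_1)\subseteq V(B)=N_B[v_2]$ automatically — a direct contradiction. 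Hence at least one cut vertex $v_B$ of $G$ in $V(B)$ has the desired property, and since this holds for every block, Proposition~\ref{P.goodCutVertexUnmixed} yields a cut vertex $v$ of $G$ with $J_{G\setminus\{v\}}$ unmixed.

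I should double-check that accessibility of $G$ (beyond mere unmixedness) is genuinely needed, since Proposition~\ref{P.goodCutVertexUnmixed} and Lemma~\ref{L.NeighborsAdjCutVertices} only require $J_G$ unmixed and chordality is needed to force blocks to be cliques. The accessibility hypothesis is presumably used to guarantee that $G$ actually \emph{has} cut vertices distributed so that the block graph argument closes up — in particular, by Remark~\ref{R.NoCutVerticesComplete} an accessible graph with no cut vertex is complete, which is excluded, so every leaf block contains a cut vertex of $G$, and Proposition~\ref{P.goodCutVertexUnmixed} can be invoked. (If chordality alone forced cliques as blocks and unmixedness gave the block-tree structure, accessibility may enter only to rule out degenerate configurations; I would state the proof citing accessibility where the paper's earlier lemmas demand it and not worry about optimality.)

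\textbf{Main obstacle.}
The only real subtlety is making sure the case analysis in Lemma~\ref{L.NeighborsAdjCutVertices} covers exactly what is needed: part~(1) requires the two cut vertices to be \emph{adjacent}, which in a general (non-chordal) graph could fail inside a block, but here chordality makes each block a clique so adjacency is free. Thus the heart of the argument is the single observation ``chordal block $\Rightarrow$ complete graph $\Rightarrow$ $N_B[v]=V(B)$,'' after which Lemma~\ref{L.NeighborsAdjCutVertices}(1) and~(2) do all the work. I expect the write-up to be short; the delicate part is simply citing the block-structure fact for chordal graphs correctly and confirming the unmixedness hypotheses of the quoted lemmas are in force.
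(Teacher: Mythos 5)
Your argument collapses at its very first structural claim: it is \emph{not} true that every block of a connected chordal graph is a complete graph. A block of a chordal graph is only a $2$-connected chordal graph, and such graphs need not be cliques: the ``diamond'' $K_4$ minus an edge (a $4$-cycle with one chord) is chordal and $2$-connected but not complete, and it can perfectly well occur as a block of a chordal graph. Graphs whose blocks are all cliques form the much smaller class of block graphs. Even the paper's own chordal example illustrates this: the graph $G$ of Figure \ref{F.chordal2b} in Example \ref{E.generalized2b} is chordal, and its block on the vertices $\{2,3,4,5,7,8\}$ is not complete (for instance $3$ and $7$ are not adjacent). Once ``$B$ is a clique'' fails, your entire case analysis evaporates: you can no longer conclude $N_B[v]=V(B)$, so Lemma \ref{L.NeighborsAdjCutVertices}(2) does not apply directly, and two cut vertices in the same block need not be adjacent merely because the graph is chordal, so Lemma \ref{L.NeighborsAdjCutVertices}(1) is not automatically available either. (Incidentally, if the block really were complete, part (2) of that lemma alone would finish, so your contradiction via part (1) was redundant even under your false premise.)

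This is precisely why the paper's proof is substantially longer. It keeps the reduction to Proposition \ref{P.goodCutVertexUnmixed}, but inside a fixed (generally non-complete) block $B$ it has to build a candidate cut vertex $v$ by greedily covering $V(B)$ with closed neighbourhoods of cut vertices, which uses accessibility in an essential way through Proposition \ref{P.subgraphCutVerticesConnected} (the cut vertices induce a connected subgraph) and Proposition \ref{P.EveryVertexIsAdjacentToACutVertex}; it then assumes $N_B(v)$ lies in a cut set of $G\setminus\{v\}$ and derives a contradiction by exhibiting an induced cycle of length at least four (violating chordality) or, in a second case, by reducing to Lemma \ref{L.NeighborsAdjCutVertices}(1). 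So accessibility is not a technicality ruling out degenerate configurations, as you speculated; it powers the covering construction at the heart of the argument, and chordality enters through an induced-cycle argument, not through any clique structure of the blocks. To repair your proof you would have to supply exactly this missing analysis for non-complete blocks, which is the actual content of the proposition.
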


\begin{proof}
Let $B$ be a block of $G$. By Proposition \ref{P.goodCutVertexUnmixed}, it is enough to show the following statement.\\[1mm]
\textbf{Claim:} there exists a cut vertex $v$ of $G$ in $V(B)$ such that $N_{B}(v)$ is not contained in any cut set of $G\setminus \{v\}$.
\\[1mm] \indent By Remark \ref{R.NoCutVerticesComplete}, $G$ has at least one cut vertex. In particular, $V(B)$ contains at least one cut vertex of $G$. Let $v_1, \dots, v_r$ be the cut vertices of $G$ belonging to $V(B)$, for some $r \geq 1$.

If $r=1$, then by Proposition \ref{P.EveryVertexIsAdjacentToACutVertex} $N_B[v_1]=V(B)$ and the claim follows by Lemma \ref{L.NeighborsAdjCutVertices} {\rm (2)}. Hence, we may assume $r \geq 2$.

Assume now that $N_B[v_1] \subsetneq V(B)$, otherwise we conclude again by Lemma \ref{L.NeighborsAdjCutVertices} {\rm (2)}. We want to find a cut vertex $v$ of $G$ in $V(B)$ fulfilling the claim. If there is a cut vertex $w$ such that $N_B(w) \cup N_B[v_{1}] =V(B)$, then we set $v=w$. Otherwise, by Proposition \ref{P.subgraphCutVerticesConnected} we can choose $v_{i_2}$ adjacent to $v_{1}$ and we have $N_B[v_{1}] \cup N_B[v_{i_2}] \subsetneq V(B)$. Again, if there exists a cut vertex $w$ such that $N_B(w) \cup N_B[v_{1}] \cup N_B[v_{i_2}] =V(B)$, then we set $v=w$. If not, by Proposition \ref{P.subgraphCutVerticesConnected}, we can continue in this way choosing at each step a cut vertex $v_{i_j}$ adjacent to at least one of $v_1,v_{i_2},\dots,v_{i_{j-1}}$. By Propositions \ref{P.EveryVertexIsAdjacentToACutVertex}, we eventually find a cut vertex $v$ of $G$ in $V(B)$ such that $N_B(v) \cup N_B[v_{1}] \cup N_B[v_{i_2}] \cup \dots \cup N_B[v_{i_c}] =V(B)$ and $N_B[v_{1}] \cup N_B[v_{i_2}] \cup \dots \cup N_B[v_{i_c}] \subsetneq V(B)$; in particular, $v$ is adjacent to at least one of $v_1,v_{i_2},\dots,v_{i_c}$. Moreover, the subgraph induced by $G$ on $\{v_{1}, v_{i_2}, \dots, {v_{i_c}}\}$ is connected. To simplify the notation, we assume without loss of generality that $\{v_1,v_{i_2}, \dots, v_{i_c}\}=\{v_1, \dots, v_c\}$.

We set $N=N_B[v_1] \cup N_B[v_2] \cup \dots \cup N_B[v_c]$ and note that $V(B)=N \cup N_B(v)$; in particular, $v \in N$ and $N_B(v) \setminus N \neq \emptyset$.

We now assume by contradiction that $N_B(v)$ is contained in a cut set $S$ of $G \setminus \{v\}$. In particular, $N_B(v)$ is a cut set of $G$ by Remark \ref{R.NeighborsCutSet}.

Assume  first that there exists $x \in N_B(v) \setminus N$ which is not a cut vertex of $G$.
Since $N_B(v)$ is a cut set containing $x$ and $x$ is not a cut vertex of $G$, it follows that $x$ is adjacent to a vertex $w \in V(B) \setminus N_B[v] \subseteq N \setminus \{v\}$.
Thus, $w \in N_B(v_i)$ for some $i=1, \dots, c$. We also know that $v \in N$, therefore $v \in N_B(v_j)$ for some $j=1, \dots, c$. Consider a minimal path $v_i=u_1, u_2, \dots, u_a=v_j$ in $G$, where $a \geq 1$ and $u_k \in \{v_1, \dots, v_c\}$ for every $k$, which exists because $G[\{v_1,\dots,v_c\}]$ is connected. Let $p$ be the maximum index for which $\{w,u_p\} \in E(G)$ and $q$ the minimum index greater than or equal to $p$ such that $\{u_q,v\} \in E(G)$. Hence, in $G$ there is an induced cycle $v, x, w, u_p, \dots, u_q, v$. Thus, its length is at least four and it has no chords since $x \notin N$ and $w \notin N_B[v]$, against $G$ being chordal.

It remains to consider the case in which $N_B(v) \setminus N$ contains only cut vertices of $G$. Let $z \in N_B(v) \setminus N$. We first show that $N_B(z) \subseteq N_B[v]$. Suppose that there exists $w \in N_B(z) \setminus N_B[v]$. Then, $w \in N \setminus \{v_1,\dots,v_c\}$ and $\{w,v_i\} \in E(G)$ for some $i \in \{1,\dots,c\}$. Consider a minimal path $w, v_i=u_0, u_1,\dots, u_a, v$, where $u_k \in \{v_1,\dots,v_c\}$ for every $k$. As before, let $p$ be the maximum index for which $\{w,u_p\} \in E(G)$ and $q$ the minimum index greater than or equal to $p$ such that $\{u_q,v\} \in E(G)$. Thus, there is an induced cycle $v, z, w, u_p, \dots, u_q, v$, where $z$ is not adjacent to any $u_k$ since $z \notin N$. Hence, since $G$ is chordal, we have that $\{v,w\} \in E(G)$, a contradiction. Therefore, $N_B(z) \subseteq N_B[v]$.

We may assume that there exists $T \in \mc C(G \setminus \{z\})$ such that $N_B(z) \subseteq T$, otherwise $z$ would satisfy the claim at the beginning of the proof. This contradicts Lemma \ref{L.NeighborsAdjCutVertices} {\rm (1)}.
\end{proof}

We are ready to prove Conjecture \ref{C.mainConj} for chordal graphs.

\begin{theorem} \label{T.FinalChordal}
If $G$ is a chordal graph, then the following conditions are equivalent:
\begin{itemize}
\item[{\rm (1)}] $J_G$ is Cohen-Macaulay;
\item[{\rm (2)}] $J_G$ is strongly unmixed;
\item[{\rm (3)}] $G$ is accessible.
\end{itemize}
\end{theorem}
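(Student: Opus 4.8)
The implications $(2)\Rightarrow(1)$ and $(1)\Rightarrow(3)$ hold for every graph, being exactly Theorems \ref{T.StronglyUnmixedImpliesCM} and \ref{T.CMimpliesRCSP}. Hence the whole content of the statement is the implication $(3)\Rightarrow(2)$, and the plan is to deduce it from Proposition \ref{P.RCSPImpliesStronglyUnmixed} applied to the class $\mc G$ of all chordal accessible graphs. Once $\mc G$ is shown to satisfy the hypothesis of that proposition, we obtain that $J_G$ is strongly unmixed for every chordal accessible $G$, which together with the two easy implications closes the equivalence of $(1)$, $(2)$ and $(3)$.

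First I would reduce to the connected case: by Remarks \ref{R.connected} and \ref{R.conncomp} each of the three properties passes to and from the connected components, and a connected component of a chordal graph is chordal. So fix a connected $G\in\mc G$. If $G$ is complete, it falls into the first alternative of Proposition \ref{P.RCSPImpliesStronglyUnmixed}. Otherwise, Proposition \ref{P.GoodCutVertexChordal} provides a cut vertex $v$ of $G$ with $J_{G\setminus\{v\}}$ unmixed, and then Corollary \ref{C.rcsp2} ensures that $G\setminus\{v\}$, $G_v$ and $G_v\setminus\{v\}$ are all accessible. It remains to check that these three graphs are again chordal, so that they belong to $\mc G$. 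The graph $G\setminus\{v\}$ is an induced subgraph of $G$, and $G_v\setminus\{v\}$ is an induced subgraph of $G_v$; as for $G_v$, completing the neighbourhood of a vertex preserves chordality. Indeed, a chordless cycle of length at least $4$ in $G_v$ must use an edge $\{a,b\}$ with $a,b\in N_G(v)$ and $\{a,b\}\notin E(G)$; such a cycle cannot contain $v$ (it would create a triangle), nor any vertex of $N_G(v)$ besides $a$ and $b$, since $N_G(v)$ is a clique in $G_v$; but then, writing the cycle as $a,b,y_1,\dots,y_k,a$ with $k\ge 2$, the cycle $v,a,y_k,\dots,y_1,b,v$ would be a chordless cycle of length at least $4$ in $G$, contradicting chordality of $G$. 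Hence $G\setminus\{v\},\,G_v,\,G_v\setminus\{v\}\in\mc G$, Proposition \ref{P.RCSPImpliesStronglyUnmixed} applies, and $J_G$ is strongly unmixed.

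The only substantial ingredient is Proposition \ref{P.GoodCutVertexChordal}: everything else is bookkeeping with Corollary \ref{C.rcsp2} and with the stability of chordality under vertex deletion and neighbourhood completion. In other words, the main obstacle has been cleared before this proof — it lies in showing that a chordal accessible graph always admits a cut vertex $v$ for which $J_{G\setminus\{v\}}$ is unmixed, which is done by reducing (via the block tree of $G$, Proposition \ref{P.goodCutVertexUnmixed}) to a single block and then exploiting chordality through the chordless-cycle arguments in Lemma \ref{L.NeighborsAdjCutVertices} and in the proof of Proposition \ref{P.GoodCutVertexChordal}. The proof of Theorem \ref{T.FinalChordal} itself is therefore just the assembly of these pieces.
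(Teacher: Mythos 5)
Your proposal is correct and follows the paper's own argument essentially verbatim: reduce to the connected case, invoke Theorems \ref{T.CMimpliesRCSP} and \ref{T.StronglyUnmixedImpliesCM} for the easy implications, and prove $(3)\Rightarrow(2)$ by applying Proposition \ref{P.RCSPImpliesStronglyUnmixed} to the class of chordal accessible graphs via Proposition \ref{P.GoodCutVertexChordal} and Corollary \ref{C.rcsp2}. The only difference is that you spell out the (correct) verification that completing the neighbourhood of a vertex preserves chordality, which the paper simply asserts.
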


\begin{proof}
By Remarks \ref{R.connected} and Remark \ref{R.conncomp}, it is enough to prove the claim for $G$ connected. By Theorems \ref{T.CMimpliesRCSP} and \ref{T.StronglyUnmixedImpliesCM}, we only need to prove that {\rm (3)} implies {\rm (2)}. We note that if $G$ is chordal and $v$ is a cut vertex of $G$, then $G_v$, $G \setminus \{v\}$, and $G_v \setminus \{v\}$ are also chordal. Setting $\mc G$ to be the class of chordal accessible graphs, the claim follows by Proposition \ref{P.GoodCutVertexChordal}, Corollary \ref{C.rcsp2}, and Proposition \ref{P.RCSPImpliesStronglyUnmixed}.
\end{proof}

Next we prove that the three conditions in Theorem \ref{T.FinalChordal} are equivalent also for another large class of graphs.
We recall that a connected graph $G$ is called \textit{traceable} if it contains a \textit{Hamiltonian path}, i.e., a path that visits each vertex of $G$ exactly once. With a slight abuse of notation, we say that a disconnected graph is \textit{traceable} if each of its connected components contains a Hamiltonian path.

\begin{lemma}\label{L.Blocksoftraceable}
If $G$ is a traceable graph, then every block of $G$ contains at most two cut vertices of $G$. Moreover, if $G$ is accessible and a block contains two cut vertices of $G$, then they are adjacent.
\end{lemma}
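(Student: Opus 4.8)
The plan is to use the Hamiltonian path of $G$ to bound the number of cut vertices in any block, and then invoke accessibility together with Lemma~\ref{L.NeighborsAdjCutVertices} for the adjacency claim. Fix a Hamiltonian path $P \colon u_1, u_2, \dots, u_n$ in $G$ and let $B$ be a block of $G$. The key observation is that $V(B)$ appears as a contiguous-ish portion of $P$ in the following sense: since $B$ is $2$-connected (or a single edge) and the blocks of $G$ meet only in cut vertices, every maximal subpath of $P$ with all vertices in $V(B)$ is separated from the rest of $B$ only through cut vertices of $G$ lying in $V(B)$. More precisely, if $u_i \in V(B)$ and $u_{i+1} \notin V(B)$, then $u_i$ must be a cut vertex of $G$, since the edge $\{u_i,u_{i+1}\}$ leaves the block; similarly if $u_i \notin V(B)$ and $u_{i+1} \in V(B)$ then $u_{i+1}$ is a cut vertex.

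First I would show that the vertices of $B$ occupy a single contiguous interval of $P$. Suppose $V(B)$ meets $P$ in at least two maximal intervals. Between two consecutive such intervals, the path exits $B$ through some cut vertex $w$ and re-enters through some cut vertex $w'$. But a cut vertex of $G$ lies in exactly two blocks when $J_G$ is unmixed (as noted before Proposition~\ref{P.goodCutVertexUnmixed}), and once $P$ leaves $B$ at $w$ it enters the unique other block $B'$ containing $w$; to return to $B$ it must pass through $w$ again, contradicting that $P$ visits $w$ only once — unless $w = w'$, which would again force a repeated visit. Hence $V(B)$ is a contiguous interval $u_a, u_{a+1}, \dots, u_b$ of $P$. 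Now the only vertices of this interval that can be cut vertices of $G$ through which $P$ enters or leaves $B$ are the endpoints $u_a$ and $u_b$: any internal vertex $u_j$ with $a < j < b$ has both neighbours $u_{j-1}, u_{j+1}$ in $V(B)$, and removing $u_j$ cannot disconnect $G$ because $B \setminus \{u_j\}$ stays connected (as $B$ is $2$-connected) and all other blocks are attached elsewhere. Therefore $B$ contains at most the two cut vertices $u_a, u_b$, proving the first assertion.

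For the second assertion, assume $G$ is accessible and $B$ contains exactly two cut vertices, which by the above are $u_a$ and $u_b$, the endpoints of the Hamiltonian subinterval spanning $V(B)$. I want to show $\{u_a, u_b\} \in E(G)$. By Proposition~\ref{P.subgraphCutVerticesConnected}, the subgraph induced on the cut vertices of $G$ is connected; in particular there is a path of cut vertices from $u_a$ to $u_b$. If $u_a$ and $u_b$ are not adjacent, this path must leave $V(B)$, but leaving $V(B)$ through a cut vertex forces passing through one of $u_a, u_b$ (the only cut vertices of $G$ in $B$) and then through the block graph; since the block graph $\mc B(G)$ is a tree by \cite[Proposition 1.3]{R19}, the unique path in $\mc B(G)$ between the two blocks adjacent to $B$ at $u_a$ and at $u_b$ respectively passes through the vertex $B$ itself, so any cut-vertex path from $u_a$ to $u_b$ in $G$ that avoids the edge $\{u_a,u_b\}$ would have to re-enter $B$, which is impossible without revisiting $u_a$ or $u_b$. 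Hence $u_a$ and $u_b$ must already be adjacent in $G$.

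The main obstacle I anticipate is making the "contiguous interval" argument fully rigorous in the presence of blocks that are single edges (where "$2$-connected" degenerates) and in correctly handling how $P$ can weave among the blocks hanging off $B$; one must argue carefully that the tree structure of $\mc B(G)$ prevents $P$ from returning to $B$ once it has left. A cleaner alternative for the adjacency part would bypass the block-graph bookkeeping entirely: restrict the Hamiltonian path to $V(B)$, observe it gives a Hamiltonian path of $B$ with endpoints among $\{u_a, u_b\}$, and then use that $B \setminus \{u_a, u_b\}$ must be connected (since $B$ has no cut vertex of $B$, being a block) — combined with accessibility forcing a cut set $\{u_a\}$ or $\{u_b\}$ to extend, this should pin down the edge $\{u_a,u_b\}$ directly via the structure of $N_B(u_a)$ and $N_B(u_b)$ and Lemma~\ref{L.NeighborsAdjCutVertices}.
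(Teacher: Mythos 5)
Your strategy is workable and, for the first assertion, genuinely different from the paper's: the paper argues directly that if a block $B$ contained three cut vertices $v_1,v_2,v_3$, then any path covering $B$ and three further blocks $B_1,B_2,B_3$ attached at them would have to pass through some $v_i$ at least twice (a path has only two endpoints to ``spend'' on the three branches), so no Hamiltonian path exists; your route instead shows that $V(B)$ occupies a contiguous segment of the Hamiltonian path and that only the two ends of that segment can be cut vertices. For the adjacency assertion your argument is exactly the paper's: it is deduced from Proposition \ref{P.subgraphCutVerticesConnected}, using that a simple path between two vertices of $B$ cannot leave $B$ and that $B$ contains no further cut vertices; Lemma \ref{L.NeighborsAdjCutVertices} is not actually needed.

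Two steps of your part-one argument need repair. First, you invoke the fact that a cut vertex lies in exactly two blocks, which the paper only has under the unmixedness of $J_G$ (see the discussion before Proposition \ref{P.goodCutVertexUnmixed}); the first assertion assumes only that $G$ is traceable, so this hypothesis is not available. The appeal is also unnecessary: if the path leaves $B$ at $w$ towards a neighbour $y\notin V(B)$, then by maximality of the block $B$ the vertex $y$ lies in a connected component of $G\setminus\{w\}$ disjoint from $V(B)\setminus\{w\}$, so returning to $B$ would force a second visit to $w$; this gives contiguity with no unmixedness. Second, your justification that an internal vertex $u_j$ of the segment is not a cut vertex (``removing $u_j$ cannot disconnect $G$ because \dots all other blocks are attached elsewhere'') is circular: that no other block is attached at $u_j$ is precisely what has to be proved. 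The correct argument is that if $u_j$ were a cut vertex, the component of $G\setminus\{u_j\}$ not meeting $V(B)\setminus\{u_j\}$ could be entered and left only through $u_j$, while both path-neighbours $u_{j-1},u_{j+1}$ of $u_j$ lie in $V(B)$ by contiguity, so the Hamiltonian path could never reach that component. With these two fixes your proof is complete and is a valid alternative to the paper's shorter counting argument for the first assertion.
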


\begin{proof}
Clearly, it is enough to consider the case in which $G$ is connected. Suppose that there is a block $B$ of $G$ containing three cut vertices $v_1,v_2,v_3$ and let $B_i$ a block containing $v_i$ different from $B$. It is easy to see that any path that visits all vertices of $B,B_1,B_2,B_3$ has to pass at least twice through one of the $v_i$'s, and hence such a path is not Hamiltonian, against the assumption. The last part of the statement now follows by Proposition \ref{P.subgraphCutVerticesConnected}.
\end{proof}

As in the case of chordal graphs, for traceable graphs we need to find a cut vertex $v$ such that $J_{G \setminus \{v\}}$ is unmixed. In the next proposition we prove a more general result, which could be useful to answer Question \ref{Q.GoodCutVertex}.

\begin{proposition}\label{P.CompleteSubgraphOnCutVertices}
Let $G$ be a connected non-complete accessible graph. Assume that for every block $B$ of $G$ the subgraph induced by $G$ on the cut vertices of $G$ belonging to $V(B)$ is complete. Then, there exists a cut vertex $v$ of $G$ such that $J_{G \setminus \{v\}}$ is unmixed.
\end{proposition}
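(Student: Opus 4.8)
The route is through Proposition \ref{P.goodCutVertexUnmixed}: since $J_G$ is unmixed, it suffices to prove that \emph{every block $B$ of $G$ contains a cut vertex $v_B$ of $G$ such that no cut set of $G\setminus\{v_B\}$ contains $N_B(v_B)$}. So fix a block $B$. As $G$ is connected, accessible and non-complete, Remark \ref{R.NoCutVerticesComplete} shows it has a cut vertex, hence $\mc B(G)$ has at least two vertices and $V(B)$ contains at least one cut vertex of $G$; write $v_1,\dots,v_r$ (with $r\ge 1$) for the cut vertices of $G$ lying in $V(B)$, which by hypothesis induce a complete subgraph of $G$.

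The first ingredient is the covering identity $V(B)=\bigcup_{i=1}^{r}N_B[v_i]$. Indeed, by Lemma \ref{L.cutSetsContainCutVertex} every non-empty cut set of $G$ contains a cut vertex, so Proposition \ref{P.EveryVertexIsAdjacentToACutVertex} applies; and a vertex of $V(B)$ which is not a cut vertex of $G$ belongs only to the block $B$, so all of its neighbours lie in $V(B)$, forcing it to be adjacent in $B$ to one of $v_1,\dots,v_r$. If $N_B[v_i]=V(B)$ for some $i$ --- in particular when $r=1$, by the covering identity --- then Lemma \ref{L.NeighborsAdjCutVertices}(2) immediately supplies the cut vertex $v_B\in V(B)$ required for $B$.

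It remains to treat the case $r\ge 2$ with $N_B[v_i]\subsetneq V(B)$ for every $i$, which I expect to be the crux. Assume for contradiction that the block-wise claim fails for $B$: then for each $i$ there is $S_i\in\mc C(G\setminus\{v_i\})$ with $N_B(v_i)\subseteq S_i$. Two consequences follow at once. By Remark \ref{R.NeighborsCutSet}, each $N_B(v_i)$ is a cut set of $G$, so $c_G(N_B(v_i))=|N_B(v_i)|+1$ since $J_G$ is unmixed; and, the $v_i$'s being pairwise adjacent, Lemma \ref{L.NeighborsAdjCutVertices}(1) gives $N_B(v_i)\not\subseteq N_B[v_j]$ for all $i\ne j$, so $N_B[v_1],\dots,N_B[v_r]$ are pairwise incomparable and each misses a vertex of $V(B)$ lying in some other $N_B(v_j)$. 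The next step is to feed $N_B(v_i)$ into Proposition \ref{P.removingNonCutVertex2}: peeling off its non-cut vertices one at a time keeps us inside $\mc C(G)$, and since the cut vertices of $G$ in $V(B)$ are exactly $v_1,\dots,v_r$, we arrive at $\{v_1,\dots,v_r\}\setminus\{v_i\}\in\mc C(G)$; Lemma \ref{L.cutSetsConsistingOfCutVertices} then shows every subset of $\{v_1,\dots,v_r\}\setminus\{v_i\}$ is a cut set of $G$.

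The hard part --- the main obstacle --- is to combine this with the clique structure of $\{v_1,\dots,v_r\}$ and with the pieces $S_i\cap(\text{the component of }G\setminus\{v_i\}\text{ away from }B)$ to reach a contradiction with unmixedness. Mimicking the proof of Lemma \ref{L.NeighborsAdjCutVertices}(1), the idea is to cut $G$ along a suitable subset of $\{v_1,\dots,v_r\}$ and glue these local cut sets together into a single cut set $T$ of $G$; counting connected components along this decomposition --- where the covering identity is used to guarantee that no branch of $G$ hangs off $V(B)$ outside $\bigcup_i N_B[v_i]$, keeping the component count additive --- one obtains $c_G(T)=|T|$, which contradicts the unmixedness of $J_G$. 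Once the block-wise claim has been verified for every block, Proposition \ref{P.goodCutVertexUnmixed} yields the desired cut vertex $v$ with $J_{G\setminus\{v\}}$ unmixed.
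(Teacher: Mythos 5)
Your reduction to the block-wise claim via Proposition \ref{P.goodCutVertexUnmixed}, the covering identity $V(B)=\bigcup_i N_B[v_i]$, and the treatment of the cases $r=1$ or $N_B[v_i]=V(B)$ via Lemma \ref{L.NeighborsAdjCutVertices}(2) are correct and match the paper's opening moves; the auxiliary facts you extract afterwards (each $N_B(v_i)\in\mc C(G)$ by Remark \ref{R.NeighborsCutSet}, pairwise incomparability from Lemma \ref{L.NeighborsAdjCutVertices}(1), and $\{v_1,\dots,v_r\}\setminus\{v_i\}\in\mc C(G)$ via Proposition \ref{P.removingNonCutVertex2} and Lemma \ref{L.cutSetsConsistingOfCutVertices}) are also true. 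But the proof stops exactly where the real work begins. In the remaining case ($r\ge 2$, all $N_B[v_i]\subsetneq V(B)$) you only announce an intention: ``cut $G$ along a suitable subset of $\{v_1,\dots,v_r\}$, glue the local cut sets $S_i$ into a cut set $T$, and count components to get $c_G(T)=|T|$.'' No such $T$ is defined, no verification that it is a cut set is given, and no component count is carried out; this is a genuine gap, not a routine completion. Moreover, a direct mimic of Lemma \ref{L.NeighborsAdjCutVertices}(1) is unlikely to go through: there the two cut sets are split along a single cut vertex into a piece inside $C_B$ and a piece outside, and the count is additive precisely because of the containment $N_B(v_1)\subseteq N_B[v_2]$ being contradicted; with $r$ cut vertices whose neighbourhoods are pairwise incomparable, the sets $S_i$ interact inside the block and there is no single vertex along which to split, so the pairwise gluing count does not obviously extend.

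The paper's actual argument is quite different from your sketch and contains the ideas your plan is missing. It fixes a non-cut vertex $u\in V(B)$ minimizing $c(u)=|N_B(u)\cap\{v_1,\dots,v_r\}|$, labels $v_1,\dots,v_c$ ($c=c(u)<r$) as the cut vertices adjacent to $u$, sets $N=N_B[v_{c+1}]\cup\dots\cup N_B[v_r]$ and $N'=\{w\in N: N_B(w)\nsubseteq N\}$, and shows $N'\in\mc C(G)$ and $V(B)=N\cup N_B[v_1]$ (this is where the minimality of $c(u)$ is used, replacing your weaker covering identity). Then, assuming $N_B(v_1)\subseteq S$ for some $S\in\mc C(G\setminus\{v_1\})$, it invokes the accessible ordering of $S$ from Corollary \ref{C.rcsp} to manufacture $s+1$ distinct non-cut vertices $a_1,\dots,a_{s+1}$ of $N'$ (one for each element of $N_B(v_1)\setminus N$, plus one), notes $v_1,\dots,v_c\in N'$ as well, and concludes $c_G(N')\le 1+c+s\le |N'|=c_G(N')-1$, contradicting unmixedness. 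So the contradiction comes from a single carefully chosen cut vertex $v_1$ and the cut set $N'$ built from the \emph{other} cut vertices, not from gluing the $S_i$'s; none of this machinery (the minimizer $u$, the sets $N$ and $N'$, the use of Corollary \ref{C.rcsp}) appears in your proposal, so as written the proof is incomplete at its crux.
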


\begin{proof}
Let $B$ be a block of $G$ and let $v_1, \dots, v_r$ be the cut vertices of $G$ belonging to $V(B)$, where $r \geq 1$, by Remark \ref{R.NoCutVerticesComplete}.

By Proposition \ref{P.goodCutVertexUnmixed}, it is enough to show that there exists a cut vertex $v \in B$ of $G$ such that $N_{B}(v)$ is not contained in any cut set of $G\setminus \{v\}$.

We may assume that $N_B[v_i] \subsetneq V(B)$ for every $i=1, \dots, r$, by Lemma \ref{L.NeighborsAdjCutVertices} {\rm (2)}; in particular, by assumption, $\{v_1,\dots,v_r\} \subsetneq V(B)$. Given $w \in V(B)\setminus \{v_1, \dots, v_r\}$, we define $c(w)$ to be the number of cut vertices of $G$ adjacent to $w$, i.e., $c(w)=|N_B(w) \cap \{v_1, \dots, v_r\}|$. Consider $u \in V(B)\setminus \{v_1, \dots, v_r\}$ for which $c(u)$ is minimal. By Proposition \ref{P.EveryVertexIsAdjacentToACutVertex}, $c(u)>0$ and, without loss of generality, suppose that $u$ is adjacent to $v_1, \dots, v_c$, where $c=c(u)$. Moreover, we can assume $c<r$, otherwise $B$ would be complete, thus $N_B[v_i]=V(B)$ for every $i$.

Define $N=N_B[v_{c+1}] \cup N_B[v_{c+2}] \cup \dots \cup N_B[v_r] \neq \empty V(B)$ and $N'=\{w \in N \mid N_B(w) \nsubseteq N\} \subseteq (N \setminus \{v_{c+1}, \dots, v_r\})$. As in Proposition \ref{P.EveryVertexIsAdjacentToACutVertex}, it is easy to see that $N'$ is a cut set of $G$.
Moreover, if $w \in V(B)$ and $w \notin N_B[v_1]$, then $w \in N$ because it is not a cut vertex and $c(w) \geq c(u)=c$. Therefore, $V(B)=N \cup N_B[v_1]$.

Now we assume by contradiction that $N_B(v_1) \subseteq S$ for some $S \in \mc C(G \setminus \{v_1\})$.
Clearly, $S$ is also a cut set of $G$ and by Corollary \ref{C.rcsp} we can order the elements of $S=\{z_1, \dots, z_t\}$ in such a way that $\{z_1, \dots, z_i\} \in \mc C(G)$ for every $i=1, \dots, t$.
Let $N_B(v_1) \setminus N=\{z_{i_1}, \dots, z_{i_s}\}$ with $i_1< \dots < i_s$, which is not empty because it contains $u$. Since $S$ is a cut set of $G \setminus \{v_1\}$, $z_{i_s}$ is adjacent to at least two vertices $a_s, a_{s+1} \in (V(B)\setminus \{v_1\}) \setminus S \subseteq V(B) \setminus N_B[v_1]\subseteq N \setminus \{v_1, \dots, v_r\}$, and both $a_s$ and $a_{s+1}$ are in $N'$ because $z_{i_s}\notin N$.

Consider now $S'=\{z_1, \dots, z_{i_{s-1}}\}$, which is a cut set of $G$. In $G \setminus S'$ there is a connected component containing $v_1, z_{i_s}, a_s$, and $a_{s+1}$. Therefore, $z_{i_{s-1}}$ has to be adjacent to a vertex\break $a_{s-1} \in V(B) \setminus (N_B[v_1] \cup \{a_s,a_{s+1}\}) \subseteq N \setminus \{v_1,\dots,v_r,a_s, a_{s+1}\}$. Again, $a_{s-1} \in N' \setminus \{a_s,a_{s+1}\}$ and it is not a cut vertex of $G$. Repeating the same argument, we find $\{a_1, \dots, a_{s+1}\} \subseteq N'$ where all the $a_i$'s are distinct and are not cut vertices of $G$.

Moreover, $v_1, \dots, v_c \in N'$ because they are adjacent to $v_r$ by assumption and to $u$ by construction.
Hence, $|N'| \geq c+s+1$ and the connected components of $G \setminus N'$ are the following:
\begin{itemize}
\item the component containing $N\setminus N'$, which is connected because $G[\{v_{c+1}, \dots,v_r\}]$ is connected (it is indeed complete) by assumption;
\item the $c$ components outside $B$, each obtained by removing $v_i$, for $i=1,\dots,c$;
\item the components of $N_B(v)\setminus N$, which are at most $s$ because $N_B(v)\setminus N$ has cardinality $s$.
\end{itemize}
Since $N'$ is a cut set of $G$ and $J_G$ is unmixed, we get $c_G(N')\leq 1+c+s \leq |N'|=c_G(N')-1$, which yields a contradiction.
\end{proof}

By Lemma \ref{L.Blocksoftraceable} and Proposition \ref{P.CompleteSubgraphOnCutVertices}, we get the following consequence.

\begin{corollary} \label{C.cutVertexTraceable}
If $G$ is a non-complete traceable accessible graph, then it contains a cut vertex $v$ such that $J_{G \setminus \{v\}}$ is unmixed.
\end{corollary}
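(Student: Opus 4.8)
This statement is a direct consequence of the two immediately preceding results, so the plan is short. First I would reduce to the case that $G$ is connected: by Remark \ref{R.connected} accessibility and unmixedness are tested on connected components, a cut vertex of one component of $G$ is a cut vertex of $G$, and every connected component of a traceable graph is itself traceable; hence it suffices to find the desired cut vertex inside a non-complete connected component of $G$. So from now on assume $G$ is connected, non-complete, traceable and accessible.

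The key step is to verify that $G$ meets the hypothesis of Proposition \ref{P.CompleteSubgraphOnCutVertices}, namely that for every block $B$ of $G$ the subgraph induced by $G$ on the set of cut vertices of $G$ contained in $V(B)$ is complete. This is exactly where Lemma \ref{L.Blocksoftraceable} is used: it says that every block of a traceable graph contains at most two cut vertices of $G$, and that — since $G$ is in addition accessible — whenever a block contains two cut vertices, those two cut vertices are adjacent in $G$. Since both of them lie in $V(B)$, such an edge also belongs to the induced subgraph $G[V(B)]$. A graph on at most two vertices in which any two distinct vertices are joined by an edge is complete (it is the empty graph, $K_1$, or $K_2$), so the subgraph induced on the cut vertices of $G$ lying in $V(B)$ is complete, for each block $B$.

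Having checked this, Proposition \ref{P.CompleteSubgraphOnCutVertices} applies verbatim to $G$ and produces a cut vertex $v$ of $G$ with $J_{G\setminus\{v\}}$ unmixed, which is precisely the assertion. There is no genuine obstacle in this step: all the substantive work lies in Lemma \ref{L.Blocksoftraceable} (where a Hamiltonian path is used to exclude three cut vertices in a single block, together with Proposition \ref{P.subgraphCutVerticesConnected} for the adjacency of two cut vertices in a block) and in Proposition \ref{P.CompleteSubgraphOnCutVertices} itself; here one only has to observe that the "complete induced subgraph on block cut vertices" condition is automatic for traceable accessible graphs.
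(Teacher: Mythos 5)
Your proof is correct and follows exactly the paper's argument: the paper also deduces the corollary by combining Lemma \ref{L.Blocksoftraceable} (at most two cut vertices per block, adjacent when there are two) with Proposition \ref{P.CompleteSubgraphOnCutVertices}, noting that the induced subgraph on the cut vertices in each block is then automatically complete. Your extra remarks on reducing to a connected component are a harmless elaboration of the same route.
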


If $G$ is traceable and $v$ is a cut vertex of $G$, clearly also $G_v$, $G \setminus \{v\}$, and $G_v \setminus \{v\}$ are traceable. Then, in light of Corollary \ref{C.cutVertexTraceable}, we can prove the next result with the same argument used for Theorem \ref{T.FinalChordal}.

\begin{theorem} \label{T.traceableCM}
If $G$ is a traceable graph, then the following conditions are equivalent:
\begin{itemize}
\item[{\rm (1)}] $J_G$ is Cohen-Macaulay;
\item[{\rm (2)}] $J_G$ is strongly unmixed;
\item[{\rm (3)}] $G$ is accessible.
\end{itemize}
\end{theorem}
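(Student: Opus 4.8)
The plan is to follow verbatim the scheme of Theorem~\ref{T.FinalChordal}, since essentially all of the work has already been carried out. First I would reduce to the connected case: by Remark~\ref{R.connected} a graph is accessible, and $J_G$ is Cohen--Macaulay, if and only if each connected component has the corresponding property, by Remark~\ref{R.conncomp} the same holds for strong unmixedness, and by the convention on traceability a disconnected graph is traceable if and only if each of its components is; so it suffices to prove the equivalence for $G$ connected and traceable. Of the three implications, two are immediately available from earlier results: $(2)\Rightarrow(1)$ is Theorem~\ref{T.StronglyUnmixedImpliesCM}, and $(1)\Rightarrow(3)$ is Theorem~\ref{T.CMimpliesRCSP}. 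Hence the entire content of the statement reduces to the implication $(3)\Rightarrow(2)$.

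To prove $(3)\Rightarrow(2)$ I would apply Proposition~\ref{P.RCSPImpliesStronglyUnmixed} to the class $\mc G$ of all traceable accessible graphs, so I only need to verify that $\mc G$ satisfies the hypothesis of that proposition. Let $G\in\mc G$. If $G$ has no cut vertices, then its connected components are complete by Remark~\ref{R.NoCutVerticesComplete}, as required. Otherwise $G$ is non-complete and connected, so Corollary~\ref{C.cutVertexTraceable} provides a cut vertex $v$ of $G$ with $J_{G\setminus\{v\}}$ unmixed; Corollary~\ref{C.rcsp2} then guarantees that $G\setminus\{v\}$, $G_v$ and $G_v\setminus\{v\}$ are accessible. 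It remains to observe that these three graphs are again traceable: a Hamiltonian path of $G$ stays a Hamiltonian path after adding the edges of $G_v$, and after deleting $v$ it breaks into at most two subpaths (a cut vertex is never an endpoint of a Hamiltonian path) which cover the remaining vertices and lie inside the distinct connected components of the deleted graph, so each component is traceable. Thus $\mc G$ is closed under the required operation, Proposition~\ref{P.RCSPImpliesStronglyUnmixed} yields that $J_G$ is strongly unmixed for every $G\in\mc G$, and $(3)\Rightarrow(2)$ follows.

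The step I expect to be the real obstacle does not lie in this final argument, which is purely formal, but upstream in Corollary~\ref{C.cutVertexTraceable}: producing, for a non-complete traceable accessible graph, a cut vertex $v$ with $J_{G\setminus\{v\}}$ unmixed. That in turn depends on Lemma~\ref{L.Blocksoftraceable} — each block of a traceable graph carries at most two cut vertices of $G$, and two such vertices are adjacent — combined with Proposition~\ref{P.CompleteSubgraphOnCutVertices}, whose proof is the genuinely delicate part: one picks a minimal ordering of a cut set of $G\setminus\{v_1\}$ containing $N_B(v_1)$, tracks how successive initial segments force new non-cut vertices $a_1,\dots,a_{s+1}$ into the set $N'$, and then derives a contradiction with the unmixedness of $J_G$ by counting the connected components of $G\setminus N'$. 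Once those are in hand, Theorem~\ref{T.traceableCM} is obtained exactly as in the chordal case. It is worth remarking, as in Theorem~\ref{T.FinalChordal}, that the equivalence makes Cohen--Macaulayness of $J_G$ field-independent for traceable graphs, and that since accessible bipartite graphs turn out to be traceable, this recovers the bipartite classification of \cite{BMS18}.
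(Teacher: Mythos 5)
Your proposal is correct and takes exactly the paper's route: reduce to the connected case, get (2)$\Rightarrow$(1) and (1)$\Rightarrow$(3) from Theorems \ref{T.StronglyUnmixedImpliesCM} and \ref{T.CMimpliesRCSP}, and obtain (3)$\Rightarrow$(2) by applying Proposition \ref{P.RCSPImpliesStronglyUnmixed} to the class of traceable accessible graphs via Corollary \ref{C.cutVertexTraceable} and Corollary \ref{C.rcsp2}, just as the paper does by repeating the argument of Theorem \ref{T.FinalChordal}. The only point to state a bit more carefully (the paper dismisses it with ``clearly'') is the traceability of $G_v\setminus\{v\}$, which is connected, so the two subpaths obtained by deleting $v$ from the Hamiltonian path do not sit in distinct components; instead one concatenates them through the new edge of $G_v$ joining the two path-neighbours of $v$, which yields the required Hamiltonian path.
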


As a consequence of Theorem \ref{T.traceableCM}, we recover the equivalence (a) $\Leftrightarrow$ (d) in \cite[Theorem 6.1]{BMS18}.

\begin{corollary}\label{C.bipartiteCM}
If $G$ is a bipartite graph, then the following conditions are equivalent:
\begin{itemize}
\item[{\rm (1)}] $J_G$ is Cohen-Macaulay;
\item[{\rm (2)}] $J_G$ is strongly unmixed;
\item[{\rm (3)}] $G$ is accessible.
\end{itemize}
\end{corollary}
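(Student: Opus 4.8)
The plan is to deduce the corollary from the traceable case, Theorem \ref{T.traceableCM}. The implications $(2)\Rightarrow(1)$ and $(1)\Rightarrow(3)$ hold for every graph by Theorems \ref{T.StronglyUnmixedImpliesCM} and \ref{T.CMimpliesRCSP}, so only $(3)\Rightarrow(2)$ needs an argument. By Remarks \ref{R.connected} and \ref{R.conncomp} we may assume $G$ is connected, and since Theorem \ref{T.traceableCM} already yields $(3)\Rightarrow(2)$ for traceable graphs, it is enough to prove the combinatorial statement that \emph{every connected bipartite accessible graph $G$ is traceable.}

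To establish this I would argue along the block decomposition of $G$. Since $J_G$ is unmixed, the block graph $\mathcal B(G)$ is a tree by \cite[Proposition 1.3]{R19}. If $G$ is complete it is $K_1$ or $K_2$ (the only complete bipartite graphs), hence traceable; if $G$ has exactly one cut vertex $v$ then $G=\mathrm{cone}(v,H_1\sqcup H_2)$ by Lemma \ref{L.oneCutVertex}, and since a triangle would contradict bipartiteness both $H_1$ and $H_2$ are edgeless, so $G$ is a path on three vertices. When $G$ has at least two cut vertices, Theorem \ref{T.propertiesCM}(3) gives that every vertex is adjacent to a cut vertex; applied to a leaf block $B_1$ of $\mathcal B(G)$, whose vertices other than its unique cut vertex $u$ are non-cut vertices of $G$ with all neighbours inside $B_1$, this forces each of them to be adjacent to $u$, so $B_1$ is a star centred at $u$, and since a block is $2$-connected or a single edge, $B_1=K_2$. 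Thus every leaf block is a pendant edge, and I would combine this with a parallel analysis of the internal blocks — each of which, together with the cut vertices of $G$ it contains, is forced into the restricted shape of the blocks of the construction in \cite{BMS18} — to thread a Hamiltonian path through the tree $\mathcal B(G)$, entering and leaving each block at its cut vertices. More economically, one may simply invoke the explicit classification in \cite[Theorem 6.1]{BMS18}: accessible bipartite graphs are exactly the graphs obtained by gluing the basic blocks of that construction along cut vertices, and each basic block carries a Hamiltonian path between its two gluing vertices, so these graphs are visibly traceable.

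Once traceability of $G$ is established, Theorem \ref{T.traceableCM} supplies $(3)\Rightarrow(2)$ and the three conditions are equivalent. I expect the main obstacle to be the traceability claim, and specifically the description of the non-leaf blocks: the leaf-block step is immediate from Theorem \ref{T.propertiesCM}(3) and bipartiteness, but controlling how cut vertices sit inside a $2$-connected bipartite block and checking that such a block admits a Hamiltonian path compatible with the tree structure of $\mathcal B(G)$ is the delicate point — which is precisely why leaning on the structural results of \cite{BMS18} is the safer route.
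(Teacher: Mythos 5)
Your proposal is correct and follows essentially the same route as the paper: the paper also reduces to showing $(3)\Rightarrow(2)$ and proves that every bipartite accessible graph is traceable by invoking the explicit classification in \cite[Theorem 6.1 (c)]{BMS18} (each block is traceable with the cut vertices as endpoints of a Hamiltonian path), then concludes by Theorem \ref{T.traceableCM}. Your ``more economical'' second route is precisely the published argument, so the partially sketched block-by-block analysis is not needed.
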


\begin{proof}
We only need to prove that {\rm (3)} implies {\rm (2)}. Notice that every bipartite accessible graph is traceable: this follows by the explicit description of such graphs given in \cite[Theorem 6.1 (c)]{BMS18}. In fact, every block of such a graph has exactly two cut vertices and is traceable. The claim now follows by Theorem \ref{T.traceableCM}.
\end{proof}

\begin{example}\label{E.traceable}
The graph $H$ in Figure \ref{F.traceable} is traceable and accessible, but non-bipartite and non-chordal. This graph already appeared in the classification of Cohen–Macaulay bicyclic graphs \cite[Lemma 3.2, Figure 7]{R19}. The cut sets of $H$ are $\mc C(H) = \{\emptyset, \{2\}, \{6\}, \{2, 6\}, \{2, 4\}, \{4, 6\}, \{3, 6\}, \{2, 4, 6\}\}$ and it easy to show that $H$ is accessible. Hence, by Theorem \ref{T.traceableCM}, $J_H$ is Cohen-Macaulay and strongly unmixed.

\begin{figure}[ht!]
\centering
\begin{tikzpicture}[scale=0.9]
\node[label={left:{\small $1$}}] (a) at (0,3) {};
\node[label={above left:{\small $2$}}] (b) at (0,1.5) {};
\node[label={below left:{\small $3$}}] (c) at (0,0) {};
\node[label={below right:{\small $4$}}] (d) at (1.5,0) {};
\node[label={right:{\small $5$}}] (e) at (2.8,0.75) {};
\node[label={above right:{\small $6$}}] (f) at (1.5,1.5) {};
\node[label={right:{\small $7$}}] (g) at (1.5,3) {};
\draw (0,3) -- (0,1.5) -- (0,0) -- (1.5,0) -- (2.8,0.75) -- (1.5,1.5) -- (1.5,3)
(0,1.5) -- (1.5,1.5) -- (1.5,0);
\end{tikzpicture}
\caption{A traceable graph $H$} \label{F.traceable}
\end{figure}
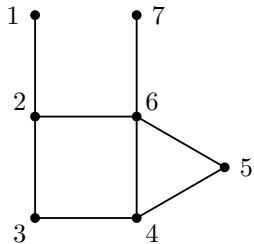
\end{example}

\begin{example}\label{E.G92StronglyUnmixed}
Theorem \ref{T.StronglyUnmixedImpliesCM} can be useful to find new examples of graphs, which are not chordal nor traceable, and whose binomial edge ideal is Cohen-Macaulay. For instance, the graph $G$ in Figure \ref{F.G92StronglyUnmixed} is not chordal nor traceable (since it has a block containing three cut vertices of $G$), but $J_G$ is strongly unmixed. In fact, if we consider the cut vertex $8$, then
\begin{itemize}
\item $G \setminus \{8\}$ is traceable and $\mc C(G \setminus \{8\}) = \{\emptyset, \{2\}, \{4\}, \{7\}, \{2, 4\}, \{2, 5\}, \{2, 7\}, \{3, 4\}, \{2, 5, 7\}\}$;
\item $G_8$ is chordal and $\mc C(G_8) = \{\emptyset, \{2\}, \{7\}, \{2, 7\}, \{2, 5, 7\}\}$;
\item $G_8 \setminus \{8\}$ is chordal and $\mc C(G_8 \setminus \{8\}) = \{\emptyset, \{2\}, \{7\}, \{2, 7\}, \{2, 5, 7\}\}$.
\end{itemize}
It is straightforward to check that the previous three graphs are accessible. Hence, by Theorems \ref{T.FinalChordal} and \ref{T.traceableCM}, their binomial edge ideals are strongly unmixed. We conclude that $J_G$ is Cohen-Macaulay by Theorem \ref{T.StronglyUnmixedImpliesCM}.
\end{example}

\section{Further remarks and problems}

Finally, we discuss some examples and open problems. First of all, we notice that it is enough to prove Conjecture \ref{C.mainConj} for \textit{indecomposable} graphs, see \cite[Definition 2.1]{R19}, i.e., graphs that cannot be decomposed as $G = G_1 \cup G_2$ where $V(G_1) \cap V(G_2) = \{v\}$ for some vertex $v$ which is free both in $G_1$ and in $G_2$. In fact, if $G$ is decomposable as $G = G_1 \cup G_2$, then
\begin{itemize}
\item $J_G$ is Cohen-Macaulay if and only if $J_{G_1}$, $J_{G_2}$ are Cohen-Macaulay by \cite[Theorem 2.7]{RR14} and,
\item using the description of the cut sets in \cite[Lemma 2.3]{RR14}, it is easy to show that $G$ is accessible if and only if $G_1,G_2$ are accessible.
\end{itemize}

Given two graphs $G$ and $H$, we introduce a new construction that produces a new graph obtained by gluing certain subgraphs of $G$ and $H$ along a cut vertex. We illustrate it through an example.

\begin{example}\label{E.generalized2b}
Let us consider the graphs $G$ and $H$ in Figure \ref{F.chordal2b} and \ref{F.traceable2b} respectively.

\begin{figure}[ht!]
\begin{subfigure}[c]{0.3\textwidth}
\centering
\begin{tikzpicture}
\node[label={below:{\small $1$}}] (a) at (-1.5,0) {};
\node[label={below left:{\small $2$}}] (b) at (-3,0) {};
\node[label={above left:{\small $3$}}] (c) at (-3,1.5) {};
\node[label={above left:{\small $4$}}] (d) at (-1.5,1.5) {};
\node[label={above:{\small $5$}}] (e) at (-1.5,3) {};
\node[label={above left:{\small $6$}}] (f) at (-3,3) {};
\node[label={above right:{\small $7$}}] (g) at (0,3) {};
\node[label={above right:{\small $8$}}] (h) at (0,1.5) {};
\node[label={below right:{\small $9$}}] (i) at (0,0) {};
\draw (-1.5,0) -- (-3,0) -- (-3,1.5) -- (-1.5,1.5) -- (-1.5,3) -- (-3,0) -- (0,1.5) -- (0,3) -- (-1.5,1.5) -- (-3,0)
(-1.5,1.5) -- (0,1.5)
(-3,3) -- (-1.5,3) -- (0,3)
(-1.5,3) -- (0,1.5) -- (0,0);
\end{tikzpicture}
\caption{A chordal graph $G$} \label{F.chordal2b}
\end{subfigure}
\begin{subfigure}[c]{0.3\textwidth}
\centering
\begin{tikzpicture}
\node[label={above left:{\small $10$}}] (g) at (0,3) {};
\node[label={above left:{\small $11$}}] (h) at (0,1.5) {};
\node[label={below left:{\small $12$}}] (i) at (0,0) {};
\node[label={below right:{\small $13$}}] (j) at (1.5,0) {};
\node[label={right:{\small $14$}}] (k) at (2.8,0.75) {};
\node[label={above right:{\small $15$}}] (l) at (1.5,1.5) {};
\node[label={above right:{\small $16$}}] (m) at (1.5,3) {};
\draw (0,3) -- (0,1.5) -- (0,0) -- (1.5,0) -- (2.8,0.75) -- (1.5,1.5) -- (1.5,3)
(0,1.5) -- (1.5,1.5) -- (1.5,0);
\end{tikzpicture}
\caption{A traceable graph $H$} \label{F.traceable2b}
\end{subfigure}
\begin{subfigure}[c]{0.38\textwidth}
\centering
\begin{tikzpicture}
\node[label={below right:{\phantom{\small $16$}}}] (a) at (-1.5,0) {};
\node (b) at (-3,0) {};
\node (c) at (-3,1.5) {};
\node (d) at (-1.5,1.5) {};
\node (e) at (-1.5,3) {};
\node (f) at (-3,3) {};
\node[label={above right:{\phantom{\small $16$}}}] (g) at (0,3) {};
\node[label={above right:{\small $v$}}] (h) at (0,1.5) {};
\node (i) at (0,0) {};
\node (j) at (1.5,0) {};
\node (k) at (2.8,0.75) {};
\node (l) at (1.5,1.5) {};
\node (m) at (1.5,3) {};
\draw (-1.5,0) -- (-3,0) -- (-3,1.5) -- (-1.5,1.5) -- (-1.5,3) -- (-3,0) -- (0,1.5) -- (0,3) -- (-1.5,1.5) -- (-3,0)
(-1.5,1.5) -- (0,1.5)
(-3,3) -- (-1.5,3) -- (0,3)
(-1.5,3) -- (0,1.5) -- (0,0)
(0,0) -- (1.5,0) -- (2.8,0.75) -- (1.5,1.5) -- (1.5,3)
(0,1.5) -- (1.5,1.5) -- (1.5,0);
\end{tikzpicture}
\caption{The graph $F$} \label{F.2bChordalTraceable}
\end{subfigure}
\caption{}\label{F.generalized2b}
\end{figure}

Notice that $J_{G \setminus \{8\}}$ and $J_{H \setminus \{11\}}$ are unmixed. Let $G'$ and $H'$ be the connected components of $G \setminus \{8\}$ and of $H \setminus \{11\}$ that are not a single vertex, i.e, $G' = G[\{1,2,3,4,5,6,7\}]$ and $H' = H[\{12,13,14,15,16\}]$. Then, we consider the graph $F$ obtained by gluing the graphs $G[V(G') \!\cup\! \{8\}]$ and $H[V(H') \!\cup\! \{11\}]$ identifying the vertices $8$ and $11$, see Figure \ref{F.2bChordalTraceable}.

One can check that $J_F$ is unmixed and we claim that it is also strongly unmixed. In fact, if we consider the cut vertex $v$ obtained by the identification of $8$ and $11$, then the graphs $F \setminus \{v\}$, $F_v$ and $F_v \setminus \{v\}$ are chordal and it can be shown that they are all accessible. Hence, by Theorem \ref{T.FinalChordal}, their binomial edge ideals are strongly unmixed. We conclude that $J_F$ is Cohen-Macaulay by Theorem \ref{T.StronglyUnmixedImpliesCM}.

The same holds if $F$ is the graph obtained by gluing $G[V(G') \cup \{i\}]$ and $H[V(H') \cup \{j\}]$ identifying the vertices $i$ and $j$, where $i \in \{2,5,8\}$ and $j \in \{11,15\}$.

Notice that $G$ is chordal and $H$ is traceable, but the resulting graph $F$ is not chordal nor traceable.
\end{example}

Example \ref{E.generalized2b} can be generalized as follows.

\begin{problem}
Let $G$ and $H$ be connected graphs, $v$ be a cut vertex of $G$ and $w$ a cut vertex of $H$. Set $G \setminus \{v\} = G_1 \sqcup G_2$, $H \setminus \{w\} = H_1 \sqcup H_2$ and suppose that $J_G$, $J_H$, $J_{G \setminus \{v\}}$ and $J_{H \setminus \{w\}}$ are unmixed. Let $F_{ij}$ be the graph obtained by gluing $G[V(G_i) \cup \{v\}]$ and $H[V(H_j) \cup \{w\}]$ identifying $v$ and $w$, for $i,j = 1,2$. If $J_G$ and $J_H$ are Cohen-Macaulay, is it true that $J_{F_{ij}}$ is Cohen-Macaulay? If $G$ and $H$ are accessible, is it true that $F_{ij}$ is accessible?
\end{problem}

\medskip

In \cite[Corollary 6.2]{BMS18}, we proved that for bipartite graphs, binomial edge ideals are the same up to isomorphism as Lovász–Saks–Schrijver ideals in two sets of variables (see \cite{HMMW15}), permanental edge ideals (see \cite[Section 3]{HMMW15}) and parity binomial edge ideals (see \cite{KSW16}), but this does not hold for non-bipartite graphs. Hence, even though Conjecture \ref{C.mainConj} would prove the field-independence of Cohen-Macaulayness for binomial edge ideals, this would not ensure the same for the other three classes. Indeed, Cohen-Macaulayness of permanental edge ideals depends on the field, as the following example shows.

\begin{example}
Recall that the permanental edge ideal of a graph $G$ on the vertex set $[n]$ is the ideal
\[
\Pi_G = (x_iy_j + x_jy_i : \{i,j\} \in E(G)) \subseteq K[x_1,\dots,x_n,y_1,\dots,y_n].
\]
Let $\mc K_4$ be the complete graph on $4$ vertices. If $\chara(K)=2$, then $\Pi_{\mc K_4} = J_{\mc K_4}$ is Cohen-Macaulay (since it is the ideal of $2$-minors of a $(2\times 4)$-generic matrix), whereas using \textit{Macaulay2} \cite{M2} one can see that $\Pi_{\mc K_4}$ is not Cohen-Macaulay if $K = \mathbb Q$. This shows that the Cohen-Macaulayness of permanental edge ideals cannot be characterized combinatorially.
\end{example}

Hence, it is natural to ask:

\begin{problem}
Does the Cohen-Macaulayness of Lovász–Saks–Schrijver ideals and of parity binomial edge ideals depend on the field? If not, is there a combinatorial description of Cohen-Macaulayness in terms of the underlying graph?
\end{problem}

\begin{example}
The graph $G$ in Figure \ref{F.LovaszFieldDependent} has the property that both the regularity and the projective dimension (and, hence, the depth) of the associated Lovász–Saks–Schrijver ideal $L_G$ in two sets of variables and of the parity binomial edge ideal $\mc I_G$ depend on the field. More precisely, if $R \! =\! K[x_i,y_i \!:\! i \in [8]]$, using \textit{Macaulay2} \cite{M2} one can see that:
\[
\pd(R/L_G) = \pd(R/\mc I_G) =
\begin{cases}
12 & \text{if } K = \mathbb Z_2\\
11 & \text{if } K = \mathbb Z_3
\end{cases}
\quad \text{and} \quad
\reg(R/L_G) = \reg(R/\mc I_G) =
\begin{cases}
7 & \text{if } K = \mathbb Z_2\\
6 & \text{if } K = \mathbb Z_3
\end{cases}.
\]
\end{example}

\begin{figure}[ht!]
\begin{subfigure}[c]{0.45\textwidth}
\centering
\begin{tikzpicture}
\node[label={below:{\small $2$}}] (a) at (0,0) {};
\node[label={below:{\small $4$}}] (b) at (1,0) {};
\node[label={below:{\small $6$}}] (c) at (2,0) {};
\node[label={below:{\small $8$}}] (d) at (3,0) {};
\node[label={above:{\small $1$}}] (e) at (0,1.5) {};
\node[label={above:{\small $3$}}] (f) at (1,1.5) {};
\node[label={above:{\small $5$}}] (g) at (2,1.5) {};
\node[label={above:{\small $7$}}] (h) at (3,1.5) {};
\node[label={above:{\small $9$}}] (i) at (4,1.5) {};
\draw (0,1.5) -- (0,0) -- (1,0) -- (2,1.5) -- (1,1.5) -- (0,1.5) -- (2,0) -- (3,1.5) -- (3,0) -- (4,1.5)
(0,0) -- (3,1.5)
(3,0) -- (1,1.5)
(1,0) -- (4,1.5) -- (2,0) -- (2,1.5);
\end{tikzpicture}
\caption{A traceable graph $G$} \label{F.LovaszFieldDependent}
\end{subfigure}
\begin{subfigure}[c]{0.45\textwidth}
\centering
\begin{tikzpicture}
\node[label={above:$2$}] (a) at (0,1.5) {};
\node[label={above:$4$}] (b) at (1,1.5) {};
\node[label={above:$6$}] (c) at (2,1.5) {};
\node[label={above:$8$}] (d) at (3,1.5) {};
\node[label={below:$1$}] (e) at (0,0) {};
\node[label={below:$3$}] (f) at (1,0) {};
\node[label={below:$5$}] (g) at (2,0) {};
\node[label={below:$7$}] (h) at (3,0) {};
\draw (0,0) -- (0,1.5) -- (1,0) -- (1,1.5) -- (3,0) -- (3,1.5) -- (2,0) -- (2,1.5) -- (0,0) -- (1,1.5)
(0,1.5) -- (2,0)
(1,0) -- (3,1.5)
(2,1.5) -- (3,0);
\end{tikzpicture}
\caption{A bipartite graph $H$} \label{F.BipartiteFieldDependent}
\end{subfigure}
\vspace*{-3mm}
\caption{}
\end{figure}

As for binomial edge ideals, we do not know whether the depth or the extremal Betti numbers are independent of the field. However, their Betti numbers may be field-dependent.

\begin{example} \label{E.BipartiteFieldDependent}
Let $H$ be the bipartite graph in Figure \ref{F.BipartiteFieldDependent}. Using \textit{Macaulay2} \cite{M2}, one can check that some graded Betti numbers of $J_H$ are different over $\mathbb Z_2$ and over $\mathbb Z_3$.
\end{example}

\medskip

In \cite{JK19}, Jayanthan and Kumar compute the regularity of Cohen-Macaulay binomial edge ideals of bipartite graphs using the explicit description of these graphs given in \cite[Theorem 6.1 (c)]{BMS18}. By the proof of Corollary \ref{C.bipartiteCM}, these graphs are traceable. Thus, we ask the following:

\begin{problem}
Is it possible to find a formula or bounds for the regularity of Cohen-Macaulay binomial edge ideals of traceable graphs?
\end{problem}

\section*{Acknowledgements}
The authors acknowledge the extensive use of the software \textit{Macaulay2} \cite{M2} and \textit{Nauty} \cite{Nauty}. The first author was supported by INdAM. The second author was supported by the Einstein Foundation Berlin under Francisco Santos grant EVF-2015-23 and by the Deutsche Forschungsgemeinschaft (DFG, German Research Foundation) – project number 454595616. The authors thank the referees for their careful reading of the paper.

\section*{Data Availability}
Data sharing not applicable to this article as no datasets were generated or analysed during the current study.

\end{document}